\documentclass{amsart}

\usepackage{amssymb,latexsym,amsmath,extarrows,amsthm, tikz}

\usepackage{graphicx}
\usepackage{xcolor}
\usepackage{mathabx}

\definecolor{blue}{rgb}{0,0,1}

\definecolor{red}{rgb}{1,0,.2}

\newtheorem*{theorem*}{Theorem}

\newtheorem{theorem}{Theorem}[section]
\newtheorem{lemma}[theorem]{Lemma}
\newtheorem{proposition}[theorem]{Proposition}
\newtheorem{remark}[theorem]{Remark}
\newtheorem{example}[theorem]{Example}

\newtheorem{corollary}[theorem]{Corollary}

\usepackage{hyperref}

\begin{document}
\title[Finite Point Configurations]
{Finite Point Configurations and the Regular Value Theorem in a Fractal setting}
\author{Yumeng Ou and Krystal Taylor}
\thanks{Y. O. is funded in part by NSF DMS-1854148. K. T. is funded in part by Simons 523555. We would like to thank the referee for carefully reading our manuscript and offering constructive comments. }

\begin{abstract}
In this article, we study two problems concerning the size of the set of finite point configurations generated by a compact set $E\subset \mathbb{R}^d$. The first problem concerns how the Lebesgue measure or the Hausdorff dimension of the finite point configuration set depends on that of $E$. 
In particular, we show that if a planar set has dimension exceeding $\frac{5}{4}$, then there exists a point $x\in E$ so that for each integer $k\geq2$, the set of ``$k$-chains'' with initial point at $x$ has positive Lebesgue measure. 

The second problem is a continuous analogue of the Erd\H{o}s unit distance problem, which aims to determine the 
maximum 
number of times a point configuration with prescribed gaps can appear in $E$. 
For instance, given a triangle with prescribed sides and given a sufficiently regular planar set $E$ with Hausdorff dimension no less than 
$\frac{7}{4}$, we show that the dimension of the set of vertices in $E$ forming said triangle does not exceed $3\,\dim_{\mathcal{H}} (E)-3$.
In addition to the Euclidean norm, we consider more general distances given by functions satisfying the so-called Phong-Stein rotational curvature condition. 
We also explore a number of examples to demonstrate the extent to which our results are sharp.

\end{abstract}

\maketitle

\section{Introduction}
We consider two problems concerning $k$-point configurations in subsets of $\mathbb{R}^d$.  
The first aim is to understand how large a subset of Euclidean space must be to ensure that 
it contains many distinct scaled copies of a given polyhedron or another geometric shape.
Upon fixing a scaling, the second problem is to determine how often a fixed shape occurs within a set of a given size.   
\vskip.05in

These questions are natural analogues of some famous open questions in discrete geometry and geometric measure theory. More precisely, the first question can be viewed as an extension of the Falconer distance set problem (whose predecessor is the celebrated Erd\H{o}s distinct distance problem in the discrete setting \cite{Erd45}), which conjectures that $|\Delta(E)|_1>0$ whenever the Hausdorff dimension of $E$ exceeds $\frac{d}{2}$ and remains open in all $d\geq 2$. 
Here, $\Delta(E)$ denotes the set of distances, $\{|x-y|: x,y\in E\}$, and $|\cdot|=|\cdot|_k$ denotes the $k$-dimensional Lebesgue measure of a set. 
\vskip.05in

The second question extends the Erd\H{o}s unit distance problem to the $k\geq 2$ case in the continuous setting. The unit distance conjecture in the plane \cite{Erd45} says that if $P$ is a planar point set with $N$ points, then the number of pairs of points in $P$ at a distance $1$ apart is bounded above by $C_\epsilon N^{1+\epsilon}$, $\forall \epsilon>0$. These questions have attracted a great amount of attention over the decades (see for instance \cite{DGOWWZ18, DZ18, Erd05, GIOW, GK15, SST, W99, Zahl} and the references therein). Their study has utilized and inspired ideas in many different fields, such as Fourier analysis (e.g. restriction theory, decoupling) and combinatorics (e.g. polynomial method).
\vskip.05in

In order to give formal statements of the two main questions of focus, we will need some notation. 
Define the chain set as follows: For $E\subset\mathbb{R}^d$, $d\geq 2$, and for integers $k\geq 2$, define the set of \emph{non-degenerate $k$-chains generated by $E$} as 
\[
S^k(E):=\{(|x_1-x_2|,\ldots, |x_{k}-x_{k+1}|)\in \mathbb{R}^k_+:\, x_i \in E, \,  x_1,\ldots, x_{k+1} \text{ are distinct}\}.
\]Note that $S^1(E)$ is simply the distance set of $E$, and it will be denoted by $\Delta(E)$ to be consistent with classical literature.
Further, we have the pinned version:
\[
S^k_x(E):=\{(|x-x_1|,\ldots, |x_{k-1}-x_k|)\in \mathbb{R}^k_+:\, x_i \in E, \, x, x_1,\ldots, x_k \text{ are distinct}\},
\]which consists of non-degenerate $k$-chains in $E$ that share a common starting place of $x\in E$. If $k=1$, the pinned distance set, $S^1_x(E)$, is denoted by $\Delta_x(E)$.
\vskip.05in

In addition to the edge-length sets generated by chains, we consider the edge-length sets of $k$-trees, triangles, and other configurations with or without loops.  
A tree is a graph in which each pair of vertices are connected by exactly one path (see, for instance, \cite{IT}). 
The edge-length set of a $k$-tree, denoted as $T^k(E)$, consists of the edge lengths of all trees of any particular fixed shape $\mathcal{T}^k$, with $k+1$ vertices in $E$ and $k$ edges.

In more detail, let $\mathcal{V}^{k+1}=\{x_1,\cdots, x_{k+1}\}$ be an ordered set of $(k+1)$ distinct vertices. Let $\mathcal{E}=\mathcal{E}(\mathcal{V}^{k+1})$ be an associated edge set such that
\[
\mathcal{E}=\mathcal{E}(\mathcal{V}^{k+1})\subset \{(x_i,x_j):\, x_i,x_j\in \mathcal{V}^{k+1},\, i<j\}.
\]
Given distinct $x_i, x_j\in \mathcal{V}^{k+1}$, a \emph{path} connecting $x_i, x_j$ is defined to be a subset $\mathcal{P}\subset \mathcal{E}(\mathcal{V}^{k+1})$ satisfying both
 (i) $x_i, x_j$ each appears exactly once in elements of $\mathcal{P}$, and 
 (ii) each of the remaining vertices in $\mathcal{V}^{k+1}$ appears either $0$ or $2$ times in elements of $\mathcal{P}$. We say that the set $\mathcal{T}^k=\mathcal{T}^k(\mathcal{V}^{k+1},\mathcal{E})$ is a $k$-tree of shape $\mathcal{E}$ if every pair of distinct vertices in $\mathcal{V}^{k+1}$ are connected by a unique path in $\mathcal{E}$. When there is no need to specify the set $\mathcal{E}$, we also simply call this a tree of shape $\mathcal{T}^k$.
It is easy to check that if $\mathcal{T}^k(\mathcal{V}^{k+1},\mathcal{E})$ is a $k$-tree, then $\mathcal{E}$ must contain exactly $k$ elements. Given any $k$-tree $\mathcal{T}^k(\mathcal{V}^{k+1},\mathcal{E})$,  we enumerate $\mathcal{E}$ by 
\[
\left\{(x_{i_1},x_{i_2}),\, (x_{i_3},x_{i_4}),\,\cdots,\, (x_{i_{2k-1}}, x_{i_{2k}})     \right\}
\]where $i_1\leq i_3\leq\cdots \leq i_{2k-1}$, and $i_{2s}<i_{2t}$ whenever $s<t$, $i_{2s-1}=i_{2t-1}$,
and we define the following edge-length vector:
\[
EL(\mathcal{V}^{k+1},\mathcal{E}):=\left(|x_{i_1}-x_{i_2}|,\, |x_{i_3}-x_{i_4}|,\,\cdots,\, |x_{i_{2k-1}}-x_{i_{2k}}|  \right) \in \mathbb{R}^k_+.
\]

Given any compact set $E\subset\mathbb{R}^d$, integer $k\geq 1$ and a fixed $k$-tree $\mathcal{T}^k(\mathcal{V}^{k+1},\mathcal{E})$, we define the \emph{edge-length set of $k$-trees of shape $\mathcal{E}$ (or $\mathcal{T}^k$) generated by $E$} as
\[
T^k(E):=\{EL(\mathcal{W}^{k+1},\mathcal{E}(\mathcal{W}^{k+1})):\, \mathcal{W}^{k+1}\subset E \}.
\]

We are also interested in pinned variants.  $T^k_x(E)$ will be used to denote the edge-length set of $k$-trees of a particular shape $\mathcal{T}^k_v$ (i.e. of shape $\mathcal{T}^k=\mathcal{T}^k(\mathcal{V}^{k+1},\mathcal{E})$ as defined above, with a particular vertex $v\in \mathcal{V}^{k+1}$ to be pinned at $x$). Note that (pinned) chains are special examples of (pinned) trees. 
\vskip.05in

In addition to the edge-length sets, we consider the vertex sets (of given point configurations). 
Given any sequence of distances $\vec{t}=(t_1,\cdots, t_k)\in \mathbb{R}_+^k$, define 
\begin{equation}\label{singledistance}
VS^k_{\vec{t}}(E):=\{(x_1,\cdots,x_{k+1})\in E^{k+1}:\, |x_i-x_{i+1}|=t_i,\, i=1,\ldots, k,\, \{x_i\}\, \text{distinct}\}.
\end{equation}
as the $k$-chain set generated by $E$ with prescribed distances $\vec{t}$,
where $E^k$ denotes the $k$-fold Cartesian product of $E$.
Similarly, vertex sets can be defined for the $k$-tree set of a particular shape, $VT^k_{\vec{t}}(E)$, as well as more general configurations containing loops, such as triangles,  $V{\rm Tri}_{\vec{t}}(E)$ (see \eqref{vtri}). 
\vskip.05in

This article concerns two main questions:
\begin{itemize}
\item[1.]
How does the size of the edge-length set $S^k(E)$, $T^k(E)$, or that generated by other point configurations depend on the size of the set $E$? 
\item[2.] Determine the number of times that a given $k$-chain, tree, or triangle with \emph{fixed side lengths} can repeat in $E$?
More precisely, determine the size of the vertex sets $VS^k_{\vec{t}}(E)$, $VT^k_{\vec{t}}(E)$, $V{\rm Tri}_{\vec{t}}(E)$.
\end{itemize}
The notion of size or number is made formal using Lebesgue measure, Hausdorff dimension, or Minkowski dimension.  
\vskip.05in

Concerning the first question, our main contribution is establishing a method that can serve as a bridge to extend all sufficiently good distance results to more intricate configurations, or more generally, to extend results concerning subgraphs to the whole graph given that they are \emph{glued} together in a nice way. This method is surprisingly simple and relies on a Fubini-like theorem. A key advantage of this method is its flexibility to deal with much more general point configurations that may have loops. 
 \vskip.05in

Regarding the second question, this seems to be the first article of its kind to extend the unit distance question to the setting of $k$-point configurations in the continuous setting (see \cite{FK, PSS} for the discrete setting and \cite{EIT, OO} for the $k=1$ case in the continuous setting).  It also appears to be the first article to examine a fractal variant of the regular value theorem for $k$-point configurations (see \cite{EIT} for the $k=1$ case). 
While existing techniques lend easily to results in the setting where the set $E$ is assumed to be Ahlfors-David regular (see Remarks \ref{rmk: AD} and \ref{implicit_ADresults} and the references there), this article presents new techniques that extend to sets with more relaxed regularity assumptions.  In particular, our techniques hold 
for some classic examples that fall outside the scope of AD regularity, such as the lattice example (see Example \ref{example: lattice}) and the train track example (see Example \ref{example: train}).


\subsection{On the first question: Lebesgue measure and dimension}

For $E\subset \mathbb{R}^d$, we write $\dim_{\mathcal{H}}(E)$ to denote the Hausdorff dimension of $E$, and we write  $\overline{{\rm dim}}_{\mathcal{M}}(E)$, $\underline{{\rm dim}}_{\mathcal{M}}(E)$, respectively, to denote the upper and lower Minkowski dimension of the set $E$. 
\begin{theorem}\label{main1}
Let $E\subset \mathbb{R}^2$ be a compact set satisfying $\dim_{\mathcal{H}}(E)>\frac{5}{4}$, then there exists a point $x\in E$ such that for all integers $k\geq 2$, all $k$-trees $\mathcal{T}_v^k$ of any shape pinned at any vertex, $|T^k_x(E)|_k>0$. In particular, $|S^k_x(E)|_k>0$. 
\end{theorem}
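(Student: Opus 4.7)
The plan is to reduce the full statement to the planar pinned distance theorem at the $5/4$ threshold (Guth--Iosevich--Ou--Wang) and then iterate along the edges of the tree by a Fubini argument.

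Fix a Frostman measure $\mu$ on $E$ of exponent $s\in(5/4,\dim_{\mathcal{H}}(E))$. The $L^2$-based spherical-averaging estimates underlying the GIOW proof yield the stronger form of that theorem that I will need: there is a set $E^*\subset E$ with $\mu(E\setminus E^*)=0$ such that for every $y\in E^*$ the pushforward
\[
\nu_y:=(z\mapsto |y-z|)_\#\mu
\]
is absolutely continuous on $\mathbb{R}$, with density in $L^2$. In particular $|\Delta_y(E)|_1>0$ for every $y\in E^*$, which takes care of the base case $k=1$. Fix any $x\in E^*$ as the pinning point; the same $x$ will work for every $k$ and every tree shape simultaneously.

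I then proceed by induction on $k$, strengthening the conclusion to the assertion that, for every tree $\mathcal{T}^k$ pinned at $x$, the pushforward of $\mu^{\otimes k}$ (over the $k$ free vertices) under the edge-length map $\Phi_{\mathcal{T}^k}$ is absolutely continuous on $\mathbb{R}^k$ with strictly positive total mass. For the inductive step, pick a leaf $w\neq x$ of $\mathcal{T}^k$ (which exists since every tree with at least three vertices has at least two leaves) with unique neighbor $z$, and let $\mathcal{T}^{k-1}$ be the subtree obtained by deleting $w$. Decompose $\Phi_{\mathcal{T}^k}=(\Phi_{\mathcal{T}^{k-1}},\Lambda)$, where $\Lambda$ records the new edge length $|z-w|$. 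Integrating in the $w$-variable first via Fubini,
\[
(\Phi_{\mathcal{T}^k})_\#\mu^{\otimes k}=\int_{E^{k-1}}\bigl(\delta_{\Phi_{\mathcal{T}^{k-1}}(y')}\otimes\nu_{z(y')}\bigr)\,d\mu^{\otimes (k-1)}(y').
\]
Because $\mu(E\setminus E^*)=0$, the measure $\nu_{z(y')}$ is absolutely continuous on $\mathbb{R}$ for $\mu^{\otimes(k-1)}$-a.e.\ $y'$. Combined with the inductive absolute continuity of $(\Phi_{\mathcal{T}^{k-1}})_\#\mu^{\otimes(k-1)}$ on $\mathbb{R}^{k-1}$, slicing an arbitrary Lebesgue-null set $A\subset\mathbb{R}^k$ in the $k$th coordinate and invoking Fubini shows that $(\Phi_{\mathcal{T}^k})_\#\mu^{\otimes k}$ charges no Lebesgue-null set. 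Its total mass $\mu(E)^k$ is positive (non-atomicity of $\mu$ excludes the degenerate diagonal), hence $|T^k_x(E)|_k>0$.

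The main obstacle is formulating the inductive hypothesis strongly enough: carrying only the raw conclusion $|T^{k-1}_x(E)|_{k-1}>0$ does not close the Fubini step, since tensoring the $(k-1)$-dimensional factor with the one-dimensional absolutely continuous measure $\nu_{z(\cdot)}$ yields an absolutely continuous measure on $\mathbb{R}^k$ only when the $(k-1)$-dimensional factor is itself absolutely continuous. A secondary technical point is upgrading the bare pinned distance theorem (``$\exists\,x$ with $|\Delta_x(E)|_1>0$'') to the $\mu$-almost-everywhere version with $L^2$ density, but this is readily available from the spherical-averaging bounds that drive the GIOW argument.
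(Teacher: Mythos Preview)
Your inductive scheme is clean, but the base input you claim from GIOW is stronger than what that paper proves, and the gap is fatal for the induction. GIOW does \emph{not} show that the full pinned distance pushforward $\nu_y=(z\mapsto|y-z|)_\#\mu$ is absolutely continuous for $\mu$-a.e.\ $y$. What it establishes (see the proof sketch of Lemma~\ref{lem: key}) is a decomposition of $\mu_1$ into a ``good'' complex measure and a ``bad'' remainder such that (i) $d_*^x(\mu_{1,good})$ has $L^2$ density for $\mu_2$-a.e.\ $x$, and (ii) $\|d_*^x(\mu_1)-d_*^x(\mu_{1,good})\|_{L^1}<1/1000$ for $x$ in a set of $\mu_2$-measure exceeding $1-1/1000$. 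Item~(ii) controls only the total variation of the bad contribution, not its regularity: the singular part of $\nu_y$ may well have positive mass. One concludes only that the absolutely continuous part of $\nu_y$ is nontrivial, hence $|\Delta_y(E)|>0$, not that $\nu_y\ll\mathcal{L}^1$. The direct $L^2$ spherical-average bound $\int\|d_*^x\mu\|_{L^2}^2\,d\mu(x)<\infty$ that would give what you want is only available in the plane above the Wolff threshold $3/2$; the whole point of the good/bad decomposition in GIOW is to reach $5/4$ without it.

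This matters because your Fubini step needs precisely $\nu_{z(y')}(A_t)=0$ for every Lebesgue-null slice $A_t$, i.e.\ absolute continuity of $\nu_{z(y')}$. Knowing only $|\Delta_{z(y')}(E)|>0$ does not prevent $\nu_{z(y')}$ from charging null sets, so the inductive absolute-continuity claim cannot be propagated. The paper circumvents this by never asserting absolute continuity of any pushforward: the induction (Proposition~\ref{structure1}) is phrased as ``a positive-$\mu$ set of pins $y$ satisfy $|T^{k-1}_y(\cdot)|_{k-1}>0$'', and one applies ordinary Fubini to the Lebesgue measure of the \emph{set} $T^k_x(E)$, slicing over $t\in\Delta_x(\text{good pins})$. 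That argument uses only the weak GIOW output and therefore goes through, at the cost of splitting $E$ into disjoint pieces at each step and a separate nested-compactness argument to produce a single $x$ valid for all $k$.
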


Theorem \ref{main1} generalizes the work of the first listed author with Guth, Iosevich and Wang \cite{GIOW} where the distance set case (i.e. $k=1$) is proved. 
When $k\geq 2$, the first such result concerning the pinned chains of $k$ distances in $\mathbb{R}^d$ is due to Bennett, Iosevich, and the second listed author \cite{BIT}, where it is required that $\dim_{\mathcal{H}}(E)>\frac{d+1}{2}$. In \cite{BIT}, the authors demonstrate that, for each $k$ and for each $E\subset \mathbb{R}^d$ of dimension greater than $\frac{d+1}{2}$,  there exist an interval worth of admissible gaps (dependent only on $k$) for which $E$ contains the vertices of a $k$-chain with side lengths in said interval.  Their argument establishes continuity of the Radon-Nikodym derivative of a natural measure on $S^k(E)$. 
(Also see \cite{FKW90} and \cite{Z06}, where the problem is investigated for sets of positive upper Lebesgue density). Recently, it was obtained by Liu \cite{Liu18} that the dimensional threshold concerning pinned $k$-chains can be lowered to $\frac{4}{3}$ in the plane (in the case of $k=2$, the threshold $\frac{4}{3}$ was first achieved in \cite{Liu17} for the full $k$-chain set).

The case of trees is slightly more involved compared to chains, as the iteration procedure becomes more complicated due to the fact that a vertex may be connected to many edges. Theorem \ref{main1} improves the previously best known result of Iosevich and the second listed author \cite{IT} where the threshold $\frac{3}{2}$ is obtained.

Moreover, we also study the \textit{dimension} of the (pinned) tree sets and prove the following.

\begin{theorem}\label{main2}
Let $E\subset \mathbb{R}^2$ be a compact set satisfying $\dim_{\mathcal{H}}(E)>1$, then for all integers $k\geq 2$, we have
\[
\dim_{\mathcal{H}}(T^k(E))\geq \min\left\{ \frac{4k}{3}\dim_{\mathcal{H}}(E) -\frac{2k}{3}, k\right\}.
\]

Moreover, for all $\epsilon>0$, for each $k \geq 2$, there exists a point $x\in E$ such that 
\begin{equation}\label{chain_dim}
\dim_{\mathcal{H}}(T_x^k(E))\geq \min\left\{\frac{4k}{3}\dim_{\mathcal{H}}(E) -\frac{2k}{3}-\epsilon, k\right\}.
\end{equation}

Furthermore, if $1< \dim_{\mathcal{H}}(E)\leq \frac{5}{4}$, then for all sufficiently small $\epsilon>0$, there exists an $x\in E$ so that for all $k\geq 2$, 
\begin{equation}\label{infinite_chain_dim}
\dim_{\mathcal{H}}(    T^k_x(E)  )  \geq  \frac{4k}{3}(\dim_{\mathcal{H}}(E)-\epsilon)-\frac{2k}{3}> \frac{2k}{3}.  
\end{equation}

\end{theorem}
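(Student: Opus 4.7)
The plan is to iterate a planar pinned distance set dimension bound one edge of the tree at a time, via a Fubini/Marstrand slicing argument. The key input is the (now-known) estimate that for compact $E \subset \mathbb{R}^2$ with $s := \dim_{\mathcal{H}}(E) > 1$ and any $\epsilon > 0$, the exceptional set
\[
\Bad_\epsilon := \left\{ x \in E : \dim_{\mathcal{H}}(\Delta_x(E)) < \min\left\{\tfrac{4s-2}{3}, 1\right\} - \epsilon \right\}
\]
satisfies $\dim_{\mathcal{H}}(\Bad_\epsilon) < s$, so that $E \setminus \Bad_\epsilon$ supports a Frostman measure of essentially full dimension $s$. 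Note that $\tfrac{4s-2}{3}=1$ exactly at $s=5/4$, which matches the threshold where Theorem \ref{main1} takes over (indeed, for $s > 5/4$ the claimed bound follows from Theorem \ref{main1} since positive Lebesgue measure gives full dimension $k$).

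For the non-pinned inequality on $\dim_{\mathcal{H}} T^k(E)$, I would root the prescribed tree $\mathcal{T}^k$ at some vertex and order the vertices $v_1, v_2, \ldots, v_{k+1}$ so that each new $v_{j+1}$ is attached by a single edge to some earlier $v_{i(j)}$ (always possible for a tree). Then the $j$-th coordinate of the edge-length map $\ell_j := |v_{i(j)} - v_{j+1}|$ depends only on $v_1, \ldots, v_{j+1}$; in particular, the projection of $T^k(E)$ onto its first $k-1$ coordinates is $T^{k-1}(E)$ for the subtree on $\{v_1, \ldots, v_k\}$, and a generic fiber contains $\Delta_{v_{i(k)}}(E)$ for an appropriate $v_{i(k)} \in E \setminus \Bad_\epsilon$. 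A slicing inequality for Hausdorff dimension (applied to the pushforward of a product Frostman measure on $E^{k+1}$ under the edge-length map) then yields
\[
\dim_{\mathcal{H}}(T^k(E)) \geq \dim_{\mathcal{H}}(T^{k-1}(E)) + \min\left\{\tfrac{4s-2}{3}, 1\right\} - \epsilon,
\]
and iterating $k$ times and letting $\epsilon\to 0$ (legitimate in the non-pinned case) gives the stated bound $\min\{\tfrac{4k}{3}s - \tfrac{2k}{3}, k\}$.

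The pinned estimate \eqref{chain_dim} proceeds identically, except that the root vertex is pre-assigned to a chosen $x \in E \setminus \Bad_\epsilon$ at the outset; the $\epsilon$ loss arises because this exceptional set cannot be made to vanish when a specific pin is fixed. For the uniform assertion \eqref{infinite_chain_dim} in the regime $1 < \dim_{\mathcal{H}}(E) \leq \tfrac{5}{4}$, I would observe that the only constraint placed on $x$ in the above iteration is $x \in E \setminus \Bad_\epsilon$, which is a condition independent of both $k$ and the tree shape; hence any fixed $x \in E \setminus \Bad_\epsilon$ simultaneously delivers the claimed lower bound for every $k \geq 2$ and every shape $\mathcal{T}^k_v$, and the strict inequality $\tfrac{4k}{3}(s-\epsilon)-\tfrac{2k}{3}>\tfrac{2k}{3}$ follows from $s-\epsilon>1$.

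The main obstacle I anticipate is making the single-edge slicing step quantitative for Hausdorff dimension (rather than Minkowski/packing dimension, where such Fubini statements are far easier) and verifying that at each iteration a good pin for the next edge can be selected outside $\Bad_\epsilon$ \emph{without re-creating an exceptional set of full dimension on the previous coordinates}. Concretely, one needs a Fubini-type statement asserting that the set of bad intermediate-pin choices has Hausdorff dimension $< s$, which reduces, via a Frostman measure on the configuration space $E^{k+1}$, to the base-case estimate on $\dim_{\mathcal{H}}(\Bad_\epsilon)$. Once this bookkeeping is in place, the iteration runs cleanly.
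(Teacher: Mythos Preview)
Your approach to the first two assertions is essentially the paper's: the key input is Liu's exceptional-set estimate for pinned distance sets, and the extension to trees proceeds by induction on $k$ via the Fubini-type slicing inequality (Corollary~\ref{cor: Fubini}). The paper packages this as Proposition~\ref{structure2}, and your ``main obstacle'' paragraph correctly identifies the bookkeeping needed---namely, that at each inductive step one first produces a full-dimensional subset $E'\subset E$ of good intermediate pins (by induction), and \emph{then} applies the base case to the pair $(E',E)$ to locate the good roots $x$.

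There is, however, a genuine gap in your argument for the uniform assertion \eqref{infinite_chain_dim}. You claim that the only constraint on the root is $x\in E\setminus\Bad_\epsilon$, a condition independent of $k$. This is not so. Trace the $k=2$ chain case: the slicing step requires $\dim_{\mathcal H}\bigl(\Delta_x(E\setminus\Bad_\epsilon)\bigr)\ge\gamma-\epsilon$ (so that fibers over $t_1$ contain $\Delta_{y_1}(E)$ for some \emph{good} $y_1$), whereas $x\notin\Bad_\epsilon$ only guarantees $\dim_{\mathcal H}(\Delta_x(E))\ge\gamma-\epsilon$. To control $\Delta_x(E\setminus\Bad_\epsilon)$ you must re-apply Liu's theorem with target set a compact subset $E'\subset E\setminus\Bad_\epsilon$, producing a \emph{new} exceptional set $\Bad'_\epsilon$ which need not be contained in $\Bad_\epsilon$. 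Iterating, the good set of roots for $k$-trees depends on $k$ through a tower of such refinements, and moreover each passage to a compact subset costs a little dimension, so one cannot simply take a countable union of the bad sets and bound its dimension below $s$.

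The paper handles this by building a nested sequence of \emph{compact} sets $\tilde G_1\supset\tilde G_2\supset\cdots$ with $\mathcal H^{\alpha_i}(\tilde G_i)>0$, where the $\alpha_i$ decrease geometrically (losing $\epsilon\cdot 10^{-i}$ at step $i$) so that $\alpha_i>s-\epsilon$ for all $i$; compactness then guarantees $\bigcap_i\tilde G_i\neq\emptyset$, and any point there is a valid pin for every $k$. This nested-compact-sets step is what your proposal is missing.
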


In particular, the result above holds for (pinned) chains. When $k=1$, this was proved by Liu \cite{Liu19}. Note that there is a minor inaccuracy in the statement of \cite[Theorem 1.1]{Liu19}, where an $\epsilon$ is in fact needed similarly as in (\ref{chain_dim}) of Theorem \ref{main2}. In fact, an improvement of the $k=1$ case when $\dim_{\mathcal{H}}(E)\in (1, 1.037)$ was recently obtained by Shmerkin \cite{Shm18}. Since the main contribution of our work is a method that allows one to extend the $k=1$ result automatically to all $k\geq 2$, we omit the statement of the slight improvement of Theorem \ref{main2} that can be implied by \cite{Shm18} when $\dim_{\mathcal{H}}(E)\in (1, 1.037)$. 

In addition, we also obtain a more general version of Theorem \ref{main2} on the exceptional set of $x$. For simplicity, we only state the next result for chains. 

\begin{theorem}\label{main2.5}
Given any compact set $E\subset \mathbb{R}^2$ and integer $k\geq 2$. Suppose that $\dim_{\mathcal{H}}(E)>1$.  Set 
\[
\tau_0^k=\tau^k_0(\dim_{\mathcal{H}}(E))=\begin{cases} \frac{4(k-1)}{3}\dim_{\mathcal{H}}(E)+\frac{5-2k}{3},&   1<\dim_{\mathcal{H}}(E)\leq \frac{5}{4}, \\ k,& \frac{5}{4}< \dim_{\mathcal{H}}(E)\leq 2.\end{cases}
\]Then, for each $\tau\in (0,\tau_0^k)$, 
\[
\begin{split}
&\dim_{\mathcal{H}}(\{x\in \mathbb{R}^2:\, \dim_{\mathcal{H}}(S^k_x(E))<\tau\})\\ 
\leq &\begin{cases} \max(2k+3\tau+(1-4k)\dim_{\mathcal{H}}(E),2-\dim_{\mathcal{H}}(E)), & 1<\dim_{\mathcal{H}}(E)\leq \frac{5}{4},\\
\max(5-3k+3\tau-3\dim_{\mathcal{H}}(E), 2-\dim_{\mathcal{H}}(E)), & \dim_{\mathcal{H}}(E)>\frac{5}{4}.\end{cases}
\end{split}
\]
\end{theorem}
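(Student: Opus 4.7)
The plan is to prove the estimate by induction on $k$, using as base case an exceptional-set version of Liu's pinned distance result and as the inductive engine the Fubini-type embedding
\[
S^k_x(E) \supseteq \bigcup_{y\in E\setminus\{x\}} \{|x-y|\}\times S^{k-1}_y(E),
\]
combined with a Marstrand-type slicing inequality. The base case $k=1$ should read
\[
\dim_{\mathcal{H}}\!\left(\{x:\dim_{\mathcal{H}}(\Delta_x(E))<\tau_1\}\right) \leq \max\bigl(2+3\tau_1-3\dim_{\mathcal{H}}(E),\, 2-\dim_{\mathcal{H}}(E)\bigr),
\]
for $\tau_1$ below the corresponding $k=1$ threshold. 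The first term is obtained by tracking the dependence on the pin $x$ through the Sobolev/$L^2$ machinery underlying the $\frac{4}{3}$-bound of \cite{Liu19,GIOW}, and the second is the standard Mattila-type baseline; both should follow from arguments that are \emph{essentially} already in the literature, with only minor bookkeeping modifications.

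For the inductive step, fix $\tau\in(0,\tau_0^k)$ and split $\tau=\sigma_1+\sigma_{k-1}$, where $\sigma_{k-1}$ is chosen as close as possible to $\tau_0^{k-1}$: namely, $\sigma_{k-1}=\frac{4(k-1)}{3}\dim_{\mathcal{H}}(E)-\frac{2(k-1)}{3}-\epsilon$ when $1<\dim_{\mathcal{H}}(E)\leq\frac{5}{4}$, and $\sigma_{k-1}=k-1-\epsilon$ when $\dim_{\mathcal{H}}(E)>\frac{5}{4}$. Let $B:=\{y\in E:\dim_{\mathcal{H}}(S^{k-1}_y(E))<\sigma_{k-1}\}$; the inductive hypothesis forces $\dim_{\mathcal{H}}(B)<\dim_{\mathcal{H}}(E)$, so $\dim_{\mathcal{H}}(E\setminus B)=\dim_{\mathcal{H}}(E)$. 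For any $x$ with $\dim_{\mathcal{H}}(\Delta_x(E\setminus B))\geq\sigma_1$, the slicing inequality applied to the first-coordinate projection of $S^k_x(E)$ (whose image contains $\Delta_x(E\setminus B)$ and whose fiber at height $t$ contains $S^{k-1}_y(E)$ for every $y\in E\setminus B$ on the circle of radius $t$ centered at $x$) yields $\dim_{\mathcal{H}}(S^k_x(E))\geq\sigma_1+\sigma_{k-1}=\tau$.

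Consequently $\{x:\dim_{\mathcal{H}}(S^k_x(E))<\tau\}\subseteq\{x:\dim_{\mathcal{H}}(\Delta_x(E\setminus B))<\sigma_1\}$, and applying the base-case exceptional estimate to $E\setminus B$ and sending $\epsilon\to 0^+$ yields exactly the bounds $2k+3\tau+(1-4k)\dim_{\mathcal{H}}(E)$ and $5-3k+3\tau-3\dim_{\mathcal{H}}(E)$ in the two regimes, with the Mattila-type $2-\dim_{\mathcal{H}}(E)$ surviving the maximum in either case. The main obstacle will be justifying the slicing lower bound: the fibers of the projection are not \emph{a priori} transverse, so one must ensure that along $\mathcal{H}^{\sigma_1}$-most radii $t$ there exist enough good $y\in E\setminus B$ on the circle of radius $t$ about $x$ for which the fiber achieves dimension at least $\sigma_{k-1}$. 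This becomes delicate when $\dim_{\mathcal{H}}(B)$ is only slightly smaller than $\dim_{\mathcal{H}}(E)$, and is handled by carrying a Frostman measure on $E\setminus B$ through the argument and exploiting that its pushforward under $y\mapsto |x-y|$ retains enough regularity to select good fibers.
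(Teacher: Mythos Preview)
Your approach is essentially the paper's: induction on $k$, Liu's exceptional-set bound as the base case, and a dimension-wise Fubini inequality to pass from $k-1$ to $k$. Two remarks.

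First, the concern in your final paragraph is a phantom, and the Frostman-measure workaround you sketch is unnecessary. The slicing inequality you need says: if $A\subset\mathbb{R}$ has $\dim_{\mathcal{H}}(A)\ge\sigma_1$ and for \emph{every} $t\in A$ the slice $B_t$ satisfies $\dim_{\mathcal{H}}(B_t)\ge\sigma_{k-1}$, then $\dim_{\mathcal{H}}(B)\ge\sigma_1+\sigma_{k-1}$ (this is Corollary~\ref{cor: Fubini} in the paper, proved by covering arguments without any Marstrand-type almost-everywhere statement). Take $A=\Delta_x(E\setminus B)$. By definition of this set, every $t\in A$ admits at least one $y_t\in E\setminus B$ with $|x-y_t|=t$, and that single $y_t$ already gives $(S^k_x(E))_t\supset S^{k-1}_{y_t}(E)$, which has dimension $\ge\sigma_{k-1}$ since $y_t\notin B$. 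So the fiber bound holds for \emph{every} $t\in A$, not just most, and no pushforward regularity is needed.

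Second, there is a minor gap you do not address: your splitting $\tau=\sigma_1+\sigma_{k-1}$ with $\sigma_{k-1}$ fixed at its maximal value may force $\sigma_1\le 0$ when $\tau$ is small, and then the base case cannot be applied with threshold $\sigma_1$. The paper handles this with an explicit case split on $\tau$: when $\tau$ is small enough that $\Gamma^k(\tau,\dim_{\mathcal{H}}(E))\le 2-\dim_{\mathcal{H}}(E)$ (so the target bound reduces to $2-\dim_{\mathcal{H}}(E)$), one instead takes $\tau_1$ near $\frac{2}{3}\dim_{\mathcal{H}}(E)$, which keeps $\tau_1\in(0,1)$ and still makes $\max(\Gamma^1(\tau_1,\cdot),2-\dim_{\mathcal{H}}(E))$ match the desired right-hand side. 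This is routine bookkeeping, but it does need to be said.
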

\vskip.125in

When $k=1$, in which case $\tau_0^k=1$ and the two quantities at the end become the same, the above result also holds true and was obtained in \cite{Liu19}. In Theorem \ref{main2.5}, by setting $\tau$ to be equal to $\frac{4k}{3}\dim_{\mathcal{H}}(E)-\frac{2k}{3}-\epsilon$, one can immediately obtain not only Theorem \ref{main2}, but also the fact that the exceptional set in $E$ (consisting of bad pin points) always has lower dimension than $E$.

The proof of Theorem \ref{main2.5} can be generalized to study the case of trees of any given shape, though the exact bound is cumbersome to state since it depends on the shape of the tree and the vertex that one chooses to pin. For example, applying the same method in the proof of Theorem \ref{main2.5}, one can show that given a compact set $E\subset \mathbb{R}^2$ with $\dim_{\mathcal{H}}(E)>1$ and integer $k\geq 2$, let $T_x^k(E)$ denote all the ``$k$-star'' pinned at $x$ at the center generated by $E$, then for all $\tau\in (0,k)$, there holds
\[
\dim_{\mathcal{H}}(\{x\in \mathbb{R}^2:\, \dim_{\mathcal{H}}(T^k_x(E))<\tau\})\leq \max\left(2+\frac{3\tau}{k}-3\dim_{\mathcal{H}}(E),2-\dim_{\mathcal{H}}(E)\right).
\]Given that it is unlikely that the estimate obtained here is sharp, we skip the parallel statement of Theorem \ref{main2.5} for general trees. It is unclear to us whether the constraint on $\tau$ in Theorem \ref{main2.5} can be further relaxed.  
 In fact, it seems that the range of $\tau$ is closely tied to the iterative nature of our proof method.

\begin{remark}
The machinery developed in Theorem \ref{main1}, Theorem \ref{main2} and Theorem \ref{main2.5} can be used to ``glue'' together any variety of pinned $k$-point configurations (including those with loops, such as triangles) that are a priori known to exist within a compact set $E$ in $\mathbb{R}^d$.  We give an example here (see Proposition \ref{glue}) as a corollary of the method.  
\end{remark}

\begin{figure}[h]
\label{kite_figure}
\centering
\includegraphics[scale=.25]{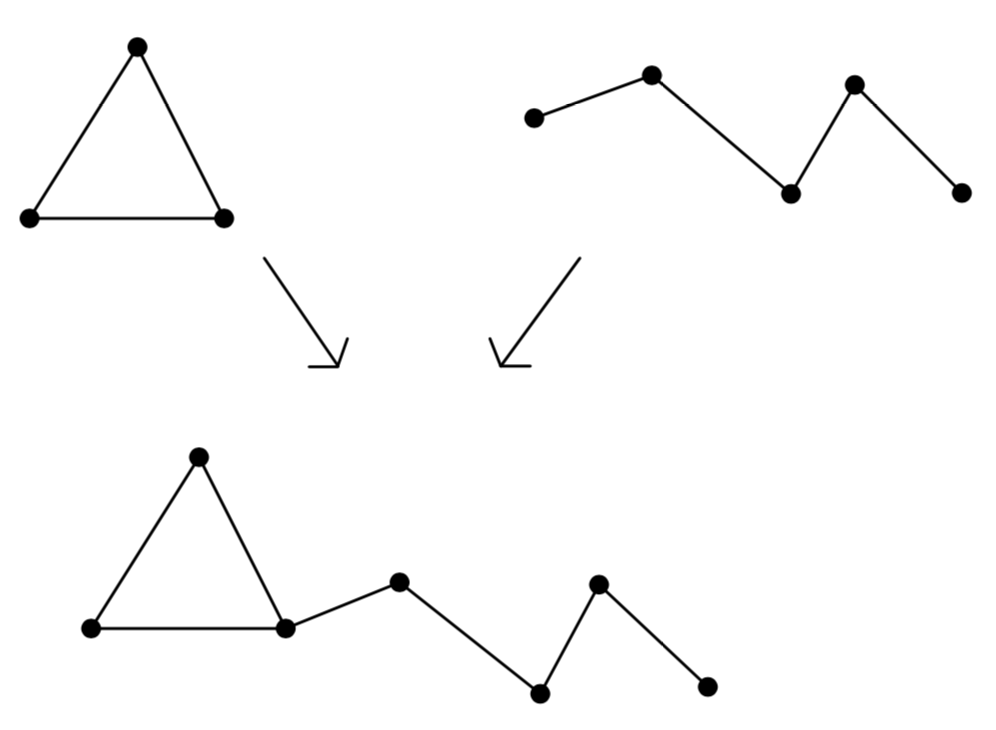}
\caption{Results for triangles and chains combine to give a kite.}
\end{figure}

\begin{proposition}\label{glue}
Let $E\subset \mathbb{R}^d$ be a compact set and let $\mu$ denote a Frostman measure on $E$.  
Suppose that there exist a pair of disjoint sets $E_1, E_2 \subset E$ so that $\mu(E_i )>0$, $i=1,2$, and, for some  $x \in E_1 $, 
$$\dim_{\mathcal{H}} 
(\{ ( |x-y_1|, |y_1-y_2|, |y_2 - x|):\, y_1, y_2 \in E_1,\, x, y_1, y_2 \text{ distinct} \})
  \geq \gamma_1>0,$$  
  and
for each $y_1 \in E_1$,
$$\dim_{\mathcal{H}}    (\{  (|y_1-y_3|, |y_3-y_4|):\, y_3, y_4 \in E_2 \text{ distinct}\})   \geq \gamma_2>0.$$
Then it holds, for some $x\in E$, that
\begin{equation}\label{kite}
\begin{split}
&\dim_{\mathcal{H}} 
(\{ (|x-y_1|,  |y_1-y_2|, |y_2 - x|, |y_1 - y_3|, |y_3-y_4|):\\
 &\qquad\qquad\qquad\qquad\qquad  y_1, y_2, y_3, y_4 \in E,\, x, y_i \text{ distinct}\}) \geq \gamma_1 + \gamma_2.
\end{split} 
\end{equation}   We call the set in \eqref{kite} a \textit{kite}.  
\end{proposition}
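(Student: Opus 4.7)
The plan is to fix the distinguished pin $x\in E_1$ furnished by the triangle hypothesis, realize the kite set as a fibered union over the pinned triangle set whose fibers are pinned $2$-chain sets through $E_2$, and then invoke a Fubini-type slicing lower bound for Hausdorff dimension.

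Concretely, I would set
$$T_x:=\{(|x-y_1|,|y_1-y_2|,|y_2-x|):\,y_1,y_2\in E_1,\ x,y_1,y_2\text{ distinct}\}\subset \RR^3,$$
which by hypothesis satisfies $\dim_\HH(T_x)\geq\gamma_1$, and for each $y_1\in E_1$
$$C_{y_1}:=\{(|y_1-y_3|,|y_3-y_4|):\,y_3,y_4\in E_2\text{ distinct}\}\subset \RR^2,$$
with $\dim_\HH(C_{y_1})\geq\gamma_2$. The kite set in \eqref{kite} then factors as
$$K_x=\bigcup_{y_1,y_2\in E_1}\{(|x-y_1|,|y_1-y_2|,|y_2-x|)\}\times C_{y_1}\subset \RR^3\times\RR^2.$$
Under the coordinate projection $\pi:\RR^5\to\RR^3$ onto the first three slots one has $\pi(K_x)=T_x$, and every nonempty slice $\pi^{-1}(t)\cap K_x$ contains $\{t\}\times C_{y_1}$ for some $y_1\in E_1$ realizing $t$ as a vertex of the triangle, hence has Hausdorff dimension at least $\gamma_2$.

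The desired bound $\dim_\HH(K_x)\geq\gamma_1+\gamma_2$ would then follow from the slicing inequality
$$\dim_\HH(K_x)\geq\dim_\HH(\pi(K_x))+\inf_{t\in\pi(K_x)}\dim_\HH(\pi^{-1}(t)\cap K_x),$$
which is the Fubini-type dimensional estimate underlying the gluing method advertised in the introduction. The standard way to realize this is by a Frostman construction: for each $\epsilon>0$ take a Frostman measure $\mu_T$ on $T_x$ of exponent $\gamma_1-\epsilon$, for each $y_1\in E_1$ a Frostman measure $\sigma_{y_1}$ on $C_{y_1}$ of exponent $\gamma_2-\epsilon$, pick a measurable selection $t\mapsto y_1(t)\in E_1$ among pins realizing the triangle $t$, and form
$$\nu:=\int_{T_x}\delta_t\otimes\sigma_{y_1(t)}\,d\mu_T(t).$$
A direct estimate on small balls yields $\nu(B(\cdot,\delta))\lesssim \delta^{\gamma_1+\gamma_2-2\epsilon}$, giving $\dim_\HH(K_x)\geq\gamma_1+\gamma_2-2\epsilon$, and letting $\epsilon\to 0$ closes the argument.

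The hard part I expect is the measurable selection step, since a generic triangle $t\in T_x$ is realized by many pairs $(y_1,y_2)\in E_1^2$. The cleanest workaround is to work upstairs on $E_1\times E_1$, pushing a product of suitably chosen Frostman measures forward along the triangle map so that a selector is never needed explicitly; a further pigeonholing of $y_1$ onto a subset of $E_1$ of positive $\mu$-measure then guarantees uniformity of the Frostman constants for the family $\{\sigma_{y_1}\}$, which is precisely what is required in the ball estimate above. The hypothesis $\mu(E_i)>0$ for $i=1,2$ is invoked exactly at this pigeonholing step, and the resulting $x$ in the conclusion is the same $x\in E_1$ produced by the triangle hypothesis.
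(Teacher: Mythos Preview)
Your high-level structure matches the paper exactly: fix the pin $x\in E_1$, view the kite set $K_x\subset\mathbb{R}^5$ as fibered over the pinned triangle set $T_x\subset\mathbb{R}^3$ with each slice containing some $C_{y_1}$, and conclude via the slicing inequality $\dim_{\mathcal{H}}(K_x)\geq\dim_{\mathcal{H}}(T_x)+\gamma_2$. The paper records precisely this argument in the remark following the proof of Proposition~\ref{structure2}.

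Where you diverge is in how the slicing inequality itself is established. The paper invokes Corollary~\ref{cor: Fubini} (Appendix~\ref{sec: Fubini}), which is proved by a covering argument in the style of Falconer's Theorem~5.8: given any $\delta$-cover $\{C_i\}$ of $K_x$ by dyadic cubes, the slices $\{(C_i)_t\}$ cover each fiber, so the fiber-dimension hypothesis forces $\sum_{i:\,t\in\pi(C_i)}|C_i|^{\gamma_2-\epsilon}\gtrsim 1$ for every $t\in T_x$, and one then sums against the net measure of $T_x$ (Lemma~\ref{lemma5.7gen}). No choice of $y_1(t)$, no family of fiber measures, and no uniformity of constants is ever needed.

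Your Frostman-measure construction is a reasonable alternative, but the workaround you sketch does not fully close the measurable-selection gap. Even after lifting $\mu_T$ upstairs to $E_1\times E_1$, you must still choose the family $\{\sigma_{y_1}\}$ measurably in $y_1$ (Frostman's lemma is not canonical), and the pigeonholing you invoke to make the constants uniform must be done against the \emph{lifted} measure rather than $\mu$ itself, else restricting $y_1$ may drop the dimension of the resulting triangle set below $\gamma_1$. These issues are likely repairable with enough care, but the covering proof in the appendix is both shorter and entirely elementary, which is why the paper uses it. Note also that the hypotheses $\mu(E_i)>0$ play no role in the paper's argument for this proposition; the slicing lemma needs only the two dimensional lower bounds you already isolated.
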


We note that similar results can be obtained for any choice of pinned point along the kite.  \vskip.08in

\smallskip
\subsection{On the second question: Prescribed distances}
We now turn to the second question, which can be viewed as an analogue of the unit distance problem. 
For $d,k\geq 2$, we are interested in determining the value of
\[
g_d(VS^k_{\vec{t}}, \alpha):=\sup\{\dim_{\mathcal{H}}(VS^k_{\vec{t}}(E)):\, E \text{ is a compact set in $\mathbb{R}^d$},\, \dim_{\mathcal{H}}(E)=\alpha\},
\]
where $VS^k_{\vec{t}}(E)$ is defined in (\ref{singledistance}),
and similarly the value when the set $VS^k_{\vec{t}}(E) $ is replaced by the vertex set of a tree, $ VT^k_{\vec{t}} $, or a triangle $V{\rm Tri}_{\vec{t}}(E)$ (see \eqref{vtri}). 
We will omit the dependence on $\vec{t}$ from the notation above when the values of $\vec{t}$ are clear from the context.
\smallskip

An additional motivation for our investigation arises from the regular value theorem from elementary differential geometry.  
The regular value theorem in elementary differential geometry says that if $\phi: X \to Y$, where $X$ is a smooth manifold of dimension $n$ and $Y$ is a smooth manifold of dimension $m<n$ with $\phi$ a submersion on the set ${\vec{\phi}}^{-1}(y)$, for $y\in Y$ fixed, then the set 
$$  {\vec{\phi}}^{-1}(y)=\{x \in X: \vec{\phi}(x)=y \}  $$
 is either empty or is a $n-m$ dimensional submanifold of $X$. 
 
 A fractal variant of the regular value theorem was obtained in \cite{EIT}, where it was shown that, under some reasonable hypotheses on $\phi: \mathbb{E\times E}\rightarrow \mathbb{R}^m$, the upper Minkowski dimension of 
$$\{(x,y) \in E \times E: \phi_l(x,y)=t_l, 1 \leq l \leq m \}$$ does not exceed $2\alpha-m$, where $E\subset \mathbb{R}^d$ is a set of Hausdorff dimension $\alpha$.  

Observe that, given any sequence of distances $\vec{t}=(t_1,\cdots, t_k)\in \mathbb{R}_+^k$, we may re-write 
the set $VS^k_{\vec{t}}(E)$ as
\begin{equation}\label{singledistance_regvalue}
VS^k_{\vec{t}}(E)=\{(x_1,\cdots,x_{k+1})\in E^{k+1}: \phi(x_1, \cdots, x_{k+1})=\vec{t},\,  \},
\end{equation}
where $\phi: \mathbb{R}^{k+1}\rightarrow \mathbb{R}_+^k$ is defined by $ \phi(x_1, \cdots, x_{k+1}) = (|x_1 - x_2|, \cdots, |x_k - x_{k+1}|)$. 
A direct analog of the regular value theorem would state that either $VS^k_{\vec{t}}(E)$ is empty or is a set of dimension $(k+1)\alpha - k$. We compare this to our Theorem \ref{main3}. 
\vskip.07in

In the continuous case, similar questions for distance sets have been studied in \cite{EIT, EIT2016, OO}, and it was observed that one needs to assume some minimal regularity for the set $E$ in order for the question to be meaningful (see \cite[Page 253]{OO} for a discussion). We will work with sets that are ``$\{\delta_i\}$-discrete $\alpha$-regular'' below, which generalizes a concept that was first introduced in \cite{KT01} and used in \cite{OO} for the distance set ($k=1$) case.

Given a compact set $E\subset \mathbb{R}^d$ and $\delta>0$, denote $E_\delta:=E+B(0,\delta)$ the $\delta$-neighborhood of $E$. Let $\alpha>0$, $E$ is said to be \emph{$\{\delta_i\}$-discrete $\alpha$-regular} if there exists a sequence of positive numbers $\{\delta_i\}$ such that $\delta_i\to 0$, and for all $\epsilon>0$ sufficiently small, there holds
\begin{equation}\label{eqn: delta discrete}
|E_{\delta_i}\cap B(x,r)|\lesssim_\epsilon \left(\frac{r}{\delta_i}\right)^{\alpha+\epsilon} \delta_i^{d-\epsilon}
\end{equation}for any $x\in \mathbb{R}^d$ and $r\geq \delta_i$, $\forall i$, where $| \cdot | $ denotes the Lebesgue measure. For $X,Y\in \mathbb{R}$, we use $X\lesssim Y$ to denote the estimate $X\le c Y$ for some constant $c>0$.  

\begin{remark}\label{rmk: AD} It is not hard to see that this class of sets contains AD regular (Ahlfors-David regular) sets as special examples. Indeed, recall that if $E$ denotes an AD regular  set of Hausdorff dimension $\alpha$, then $E$ supports a Borel probability measure $\mu$ so that for each $0< r < diam(E)$ and for each $x\in E$, $cr^{\alpha} \le  \mu(B(x,r)) \le Cr^{\alpha}$, for universal constants $0<c<C$.  Letting $\delta \in (0,r]$ and $x\in E$, we can write $\mu(B(x,r)) \sim r^{\alpha} =  \delta^{\alpha}   (\frac{r}{\delta})^{\alpha}$, and deduce that the number of $\delta$-balls required to cover $E\cap B(x,r)$ is 
approximately $ (\frac{r}{\delta})^{\alpha}$.  This holds for any set of scales $(\delta, r)$ with $\delta \le r$.  It follows that, if $N$ denotes the number of $\delta$-balls needed to cover $E_{\delta}\cap B(x,r)$, then $N \sim (\frac{r}{\delta})^{\alpha}$.  Now $|E_{\delta}\cap B(x,r)| \le N \delta^d \lesssim  (\frac{r}{\delta})^{\alpha}\delta^d $.  
\end{remark}

Moreover, the class of sets considered here includes the class of $\delta$-discrete $\alpha$-regular sets considered in \cite{OO} (where (\ref{eqn: delta discrete}) is assumed to hold for all $\delta>0$ rather than only a sequence of scales $\{\delta_i\}$), and some examples that are not AD regular. 

\begin{example}\label{example: lattice}[The lattice example]
A non-trivial example of a set which is $\{\delta_i\}$-discrete $\alpha$-regular, but not AD regular is provided by a classic lattice-like construction. In more detail, we create a fractal subset of $\mathbb{R}^d$ that mimics an integer lattice as follows: 
For an integer $q_i$, consider the set $E_i:= \bigcup_{\vec{u} \in \{0, 1, \dots, q_i\}^d} \{\vec{x} \in \mathbb{R}^d: |\vec{x} - \frac{\vec{u}}{q_i}| < q_i^{-d/\alpha}\}$.  By taking a rapidly increasing sequence of $q_i$, and setting $E$ equal to the intersection of the sets $E_i$, one obtains a set of Hausdorff dimension $\alpha$. (For more details on this construction, see \cite[Theorem 8.15]{Falc86}.) 
The set $E$ is not AD regular (unless $\alpha=d$) but is $\{\delta_i\}$-discrete $\alpha$-regular by taking $\delta_i=q_i^{-d/\alpha}$ (see Appendix \ref{ADproperty} for details).  Verifying that $E$ is not AD regular can be accomplished in one of two ways:  noting that $E$ contains arbitrarily long arithmetic progressions, it follows from the work of Dyatlov and Zahl (\cite{DZahl16}, see Section 6.1.1) that it cannot be AD regular; direct computation shows that $E$ is not AD regular and we include this in Appendix \ref{ADproperty}.  
\end{example}

\begin{example}\label{example: train}[The train track example]
Another example of a set that is $\{\delta_i\}$-discrete $\alpha$-regular, but not AD regular, can be constructed from train tracks. Such an example was first studied by Katz--Tao \cite{KT01} and was further examined in \cite{GIOW} in connection with the Falconer distance problem. More precisely, consider a sequence $\{R_i\}$ rapidly increasing to infinity, and define a set $E=\cap E_i$ in the unit square in $\mathbb{R}^2$ as follows. Fix each $R_i$, $E_i$ is divided among several large $R_i^{-1/2}\times 1$ vertical rectangles, spaced by distance $R_i^{\frac{1}{2}-\frac{\alpha}{2}}$ ($1< \alpha < 2$). Within each of these large rectangles, the set $E_i$ consists of small evenly spaced parallel horizontal rectangles with dimensions $R_i^{-1}\times R_i^{-1/2}$. Each of these small horizontal rectangles is called a slat, and the spacing between two consecutive slats is $R_i^{-\frac{\alpha}{2}}$. By taking $\delta_i=R_i^{-1}$, one can verify that $E$ has Hausdorff dimension $\alpha$ and is indeed $\{\delta_i\}$-discrete $\alpha$-regular (see for instance \cite{KT01}). However, $E$ is not AD regular. To see this, suppose $E$ does support a probability measure $\mu$ satisfying $cr^\alpha\leq \mu(B(x,r))\leq C r^\alpha$, $\forall x\in E$. Let $r\in (R_i^{-1}, R_i^{-\frac{\alpha}{2}})$ and fix $x\in E$, then $E\cap B(x,r)$ is contained in a single slat hence can be covered by at most $rR_i$ balls of radius $R_i^{-1}$, which implies $\mu(B(x,r))\leq CrR_i^{1-\alpha}$. On the other hand, $\mu(B(x,r))\geq cr^{\alpha}$. Therefore $\frac{c}{C}< R_i^{(1-\frac{\alpha}{2})(1-\alpha)}$, which is impossible as $R_i\to \infty$.
\end{example}

Going forward in our discussion, we will assume the set $E$ to be $\{\delta_i\}$-discrete $\alpha$-regular. Without loss of generality, we will further assume $E$ to be contained in the unit ball throughout the article, hence, the condition (\ref{eqn: delta discrete}) implies in particular that $|E_{\delta_i}|\lesssim_\epsilon \delta_i^{d-\alpha-\epsilon}$, $\forall i$, $\forall \epsilon>0$ sufficiently small.
\vskip.07in

In the discrete setting, similar questions for chains have been very recently explored as well, see \cite{FK, PSS}. Note that in both works, the authors in fact study a slightly larger quantity than $g_d(VS^k_{\vec{t}},\alpha)$, where an additional supremum is taken over all choices of prescribed gaps $\vec{t}$. All the upper bound results we obtain in this paper also extend to this larger quantity, but we need to be more careful here for deriving the lower bound, which would depend on the particular choice of gaps $\vec{t}$. In fact, as will be explained later in the paper, different choices of gaps sometimes yield very different behaviors of $g_d(VS^k_{\vec{t}},\alpha)$.

Another distinct feature of our work, compared to the discrete setting, is that nontrivial results do exist in the $d\geq 4$ case; It is well known that the unit distance problem and the chain problems are trivial in the discrete case when $d\geq 4$, see the introduction of \cite{OO, PSS} for more details.

\smallskip
\subsubsection{Chains}
Our first result in this direction concerns all $d\geq 2$.

\begin{theorem}\label{main3}
For all $d\geq 2$, $k\geq 1$, and prescribed gaps $\vec{t}$,
\[
g_d(VS^k_{\vec{t}}, \alpha)\begin{cases} =(k+1)\alpha-k, & \frac{d+1}{2}\leq \alpha\leq d,\\
\leq \frac{k(d-1)}{2}+\alpha, &  \alpha\leq\frac{d+1}{2}.\\
\end{cases}
\]
\end{theorem}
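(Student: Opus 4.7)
The plan is to prove the upper and lower bounds separately by a $\delta$-discretization at the scales $\delta=\delta_i$ along which $E$ is $\alpha$-regular. For the upper bound, setting $f=\mathbf{1}_{E_\delta}$, I bound the $\delta$-covering number of $VS^k_{\vec t}(E)$ by
\[
N_\delta(VS^k_{\vec t}(E))\lesssim\delta^{-d(k+1)}\int f(y_1)\cdots f(y_{k+1})\prod_{j=1}^k\mathbf{1}_{\{\,||y_j-y_{j+1}|-t_j|\lesssim\delta\,\}}\,dy,
\]
and peel off one variable at a time to reduce to controlling $M(y,t):=|E_\delta\cap S_t(y)_\delta|$ for $y\in E_\delta$. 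A bound of the form $M(y,t)\lesssim\delta^{\beta}$, combined with $|E_\delta|\lesssim\delta^{d-\alpha}$, yields $\dim_{\mathcal H} VS^k_{\vec t}(E)\le dk+\alpha-k\beta$; so $\beta=d-\alpha+1$ recovers $(k+1)\alpha-k$, while $\beta=(d+1)/2$ recovers $\alpha+k(d-1)/2$, exactly the two branches.

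For $\alpha\ge(d+1)/2$, the shell bound $M(y,t)\lesssim\delta^{d-\alpha+1}$ should come from a Mattila-type spherical averaging estimate adapted to the $\{\delta_i\}$-discrete $\alpha$-regular setting, combined with a stopping-time/pigeonhole argument to dispose of an exceptional set of $y$'s where a pointwise bound fails. For $\alpha\le(d+1)/2$, the shell bound $M(y,t)\lesssim\delta^{(d+1)/2}$ should follow from covering $S_t(y)$ by $\sim\delta^{-(d-1)/2}$ caps of radius $\delta^{1/2}$, applying the regularity (\ref{eqn: delta discrete}) inside each cap, and using the $\delta$-thinness of the annulus to refine the naive $\delta^{1/2}$-ball bound. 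The $k=1$ base case is covered by the Oh--Orponen theorem in \cite{OO}, and the general $k$ follows by iterating the shell bound along the chain.

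For the matching lower bound when $\alpha\ge(d+1)/2$, I exhibit an explicit set $E$ of Hausdorff dimension $\alpha$ --- the unit ball when $\alpha=d$, a smooth $\alpha$-dimensional submanifold when $\alpha\in\mathbb{Z}$, or a suitable self-similar / Ahlfors-regular Cantor-type construction otherwise --- on which the distance map
\[
\phi:E^{k+1}\to\mathbb{R}^k_+,\qquad\phi(y_1,\ldots,y_{k+1})=(|y_1-y_2|,\ldots,|y_k-y_{k+1}|),
\]
is a submersion on a large subset. A fractal analog of the regular value theorem (compare \cite{EIT} for $k=1$) then yields $\dim_{\mathcal H}\phi^{-1}(\vec t)=(k+1)\alpha-k$ for a generic $\vec t$, matching the upper bound.

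The main obstacle I anticipate is establishing the sharp shell bound $M(y,t)\lesssim\delta^{d-\alpha+1}$ in the regime $\alpha\ge(d+1)/2$: the Mattila estimate is naturally an $L^2$-type statement rather than a pointwise one, so turning it into usable pointwise control (outside a manageable exceptional set) requires care. A secondary difficulty is that a crude pointwise iteration of the shell bound across $k$ steps can leak a factor as large as $\delta^{-k\alpha/2}$, so an $L^2$/Cauchy--Schwarz refinement at each induction step may be needed to preserve sharpness.
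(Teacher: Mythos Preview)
Your overall discretization framework is right, and you have correctly identified that the crux is a ``shell bound'' controlling $|E_\delta\cap (S_t(y))_\delta|$. But the specific plan has two gaps.

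\textbf{Upper bound.} The pointwise shell bounds you propose are simply false. For $\alpha\le (d+1)/2$, the claim $M(y,t)\lesssim\delta^{(d+1)/2}$ is stronger than the trivial annulus bound $M(y,t)\le|A_{t,\delta}|\sim\delta$ whenever $d\ge 2$, and it fails e.g.\ when $E$ contains a piece of a $(d-1)$-sphere centered near $y$. Your cap argument only gives $\delta^{(d+1)/2-\alpha/2}$, and the ``$\delta$-thinness refinement'' cannot recover the missing $\delta^{\alpha/2}$ because the regularity hypothesis (\ref{eqn: delta discrete}) applies only to balls, not to thin slabs. Similarly, for $\alpha\ge(d+1)/2$, the bound $M(y,t)\lesssim\delta^{d-\alpha+1}$ fails at special points, and your stopping-time/exceptional-set idea is not developed; once $k\ge2$, a bad vertex can occur at any of the $k+1$ positions in the chain, so the exceptional sets compound and it is not clear how to close the induction. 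What actually works is precisely the fallback you mention last: one proves the $L^2(E_\delta)\to L^2(E_\delta)$ operator bound
\[
\Big(\int_{E_\delta}\big|(f\chi_{E_\delta})*\chi_{A_{t,\delta}}\big|^2\Big)^{1/2}\lesssim_\epsilon\delta^{\beta(d,\alpha)-\epsilon}\Big(\int_{E_\delta}|f|^2\Big)^{1/2},
\]
with $\beta(d,\alpha)=\min(d-\alpha+1,\tfrac{d+1}{2})$, and then iterates it $k$ times via Cauchy--Schwarz on $\langle f_k,\chi_{E_\delta}\rangle$. The $L^2$ bound itself comes from Plancherel, the decay $|\widehat{\sigma_t}(\xi)|\lesssim(1+|\xi|)^{-(d-1)/2}$, and an energy-type estimate $\int_{|\xi|\lesssim\delta^{-1}}|\widehat{f\chi_{E_\delta}}(\xi)|^2(1+|\xi|)^{-(d-\alpha)}\,d\xi\lesssim\delta^{d-\alpha-\epsilon}\|f\|_{L^2(E_\delta)}^2$ obtained from the $\{\delta_i\}$-regularity by interpolation. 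No pointwise control is ever needed.

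\textbf{Lower bound.} Your plan gives the bound only for \emph{generic} $\vec t$, but the theorem asserts it for every $\vec t$. The paper's construction is both simpler and stronger: take $E=C\times[0,L]$ with $C\subset\mathbb{R}^{d-1}$ Ahlfors--David regular of dimension $\alpha-1$ and $L$ large (depending on $\vec t$). For any distinct $c_1,\ldots,c_{k+1}\in C$ and any $s_1\in[0,L]$, one solves $|s_{j+1}-s_j|=(t_j^2-|c_j-c_{j+1}|^2)^{1/2}$ recursively to get a chain with gaps $\vec t$; hence $\dim_{\mathcal H} VS^k_{\vec t}(E)\ge(k+1)(\alpha-1)+1=(k+1)\alpha-k$. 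No regular value theorem is required.
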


Note that there always holds the trivial estimate $g_d(VS_{\vec{t}}^k,\alpha)\leq (k+1)\alpha$ (see a justification in Section \ref{sec: trivial upper bound}), so the second estimate of Theorem \ref{main3} is only nontrivial when $\alpha>\frac{d-1}{2}$.

\begin{remark}\label{rmk: trivial examples} 
To put the second listed bound in Theorem \ref{main3} into context, we observe an additional upper bound which is also inferior in the regime $\alpha>\frac{d-1}{2}$. 
The proof techniques used in \cite{OO} can be used to show that, 
if $\dim_{\mathcal{H}}(E)=\alpha$, $E$ is $\{\delta_i\}$-discrete $\alpha$-regular, and $\alpha < \frac{d+1}{2}$, then the lower Minkowski dimension of the set 
$\{(x, y)\in E\times E: |x  - y | =1 \}$ is bounded above by $ \alpha + \frac{d-1}{2}$.
In more detail, the Lebesgue measure of the $\delta_i$-neighborhood of $\{(x, y)\in E\times E: |x  - y | =1 \}$ is bounded above by $\delta_i^{2d-(\alpha + \frac{d-1}{2})-\epsilon}$, $\forall i$. Observing that the vertex set of the $k$-chain set, $VS^k_{\vec{1}}(E)$, is contained in 
the set $\{(x^1, x^2)\in E\times E: |x^1 - x^{2}| =1 \} \times E^{k-1},$ and recalling that $|E_{\delta_i}|\lesssim \delta_i^{d-\alpha-\epsilon}$, $\forall i$, we use the fact that the Hausdorff dimension is bounded above by the lower Minkowski dimension to conclude that 
$g_d(VS_{\vec{1}}^k,\alpha)\leq  \alpha + \frac{d-1}{2} + (k-1)\alpha$. (A slightly better upper bound can be obtained in a similar way, by decomposing the $k$-chain into shorter components and utilizing the upper estimate for shorter chains or distances. However, results obtained in this way will always be inferior to Theorem \ref{main3} when $\alpha>\frac{d-1}{2}$.)
\end{remark}

For small values of $\alpha$, different features are displayed in different ambient dimensions.

\begin{theorem}\label{main4}
Let $k\geq 1$. 
For all $d\geq 4$ and $\alpha\leq \lfloor \frac{d}{2} \rfloor-1$, $g_d(VS_{\vec{t}}^k, \alpha)=(k+1)\alpha$, for all $\vec{t}$ satisfying $t_1\leq \cdots \leq t_k$. Moreover, when $d=2$, we have $g_2(VS^2_{\vec{t}}, \alpha)=2\alpha$ if $0<\alpha\leq 1$, for all $\vec{t}$.
\end{theorem}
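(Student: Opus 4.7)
I would prove Theorem \ref{main4} by treating the two cases separately, in each case establishing both an upper and a lower bound on $g_d(VS^k_{\vec{t}},\alpha)$. The upper bound in the first case is the trivial product estimate $(k+1)\alpha$ already noted in Section \ref{sec: trivial upper bound}; the remaining bounds require explicit constructions together with a projection argument.

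For the Part 1 lower bound, the plan is to exploit the abundance of orthogonal structure in $\mathbb{R}^d$. Since $\alpha \leq \lfloor d/2\rfloor - 1$ implies $2(\alpha+1) \leq d$, I may select two orthogonal linear subspaces $V_1, V_2 \subset \mathbb{R}^d$, each of dimension $\lfloor d/2\rfloor \geq \alpha+1$. For $x \in V_1$ and $y \in V_2$, the Pythagorean identity $|x-y|^2 = |x|^2 + |y|^2$ holds. Given monotone gaps $t_1 \leq \cdots \leq t_k$, I would solve the recursion $r_{i+1}^2 = t_i^2 - r_i^2$ for nonnegative radii $r_1,\ldots,r_{k+1}$, starting from (say) $r_1 = t_1/\sqrt{2}$; an induction using the monotonicity of $\vec{t}$ keeps all radicands nonnegative. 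Setting
\[
E = \bigcup_{i=1}^{k+1} \Sigma_i,
\]
where $\Sigma_i$ is an $\alpha$-dimensional subset of the sphere of radius $r_i$ in $V_1$ (if $i$ is odd) or in $V_2$ (if $i$ is even), gives $\dim_{\mathcal{H}} E = \alpha$, while orthogonality provides $|x_i - x_{i+1}|^2 = r_i^2 + r_{i+1}^2 = t_i^2$ for any choice of $x_j \in \Sigma_j$. Hence $VS^k_{\vec{t}}(E) \supseteq \Sigma_1 \times \cdots \times \Sigma_{k+1}$, which has Hausdorff dimension $(k+1)\alpha$.

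For the Part 2 upper bound, I would project $VS^2_{\vec{t}}(E) \subseteq E^3$ onto the first and third coordinates via $\pi_{13}(x_1,x_2,x_3) := (x_1,x_3)$. In $\mathbb{R}^2$ the fiber over a pair $(x_1,x_3)$ lies in the intersection of two circles of radii $t_1,t_2$, hence consists of at most two points. Therefore $\dim_{\mathcal{H}} VS^2_{\vec{t}}(E) \leq \dim_{\mathcal{H}}(E \times E) \leq 2\alpha$, where the last estimate invokes the $\{\delta_i\}$-discrete $\alpha$-regularity assumption on $E$ in force in the second half of the paper (from which one directly gets $|(E \times E)_{\delta_i}| \lesssim \delta_i^{2d - 2\alpha - \epsilon}$).

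For the Part 2 lower bound, I would adapt the lattice-type fractal of Example \ref{example: lattice} to the given gaps. Given $\vec{t} = (t_1, t_2)$, I would pick a rapidly increasing sequence $\{q_i\}$ so that the displacement lengths $t_1, t_2$ are well-approximated at scale $q_i^{-1}$ by many integer vectors in $\mathbb{Z}^2$, using arithmetic choices (e.g., $q_i$ built from primes $\equiv 1 \pmod 4$) which force $\lfloor t_j^2 q_i^2 \rfloor$, or its closest attainable neighbor, to admit many representations as a sum of two squares. Taking $E = \bigcap_i E_i$ for the corresponding thickened lattice approximants yields a compact set with $\dim_{\mathcal{H}} E = \alpha$ that supports $\gtrsim q_i^{2\alpha}$ approximate $2$-chains with gaps close to $\vec{t}$, and a Frostman measure argument on $VS^2_{\vec{t}}(E)$ upgrades the discrete count to $\dim_{\mathcal{H}} VS^2_{\vec{t}}(E) \geq 2\alpha$. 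The main obstacle I foresee is this final step: producing the full Hausdorff (rather than merely box) dimension $2\alpha$ for arbitrary $\vec{t}$ requires delicate arithmetic control on the number of lattice chains surviving at each intersection stage, and it is not automatic that the box and Hausdorff dimensions coincide here.
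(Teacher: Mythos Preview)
Your Part 1 is essentially the paper's orthogonal-circles construction, just with slightly different bookkeeping for the radii; the upper bound is the trivial one, and your recursion $r_{i+1}^2=t_i^2-r_i^2$ from $r_1=t_1/\sqrt{2}$ is easily seen (by induction) to keep $0<r_i^2<t_i^2$, so this part is fine.

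For the Part 2 upper bound your idea is the right one, but the sentence ``the fiber \ldots\ consists of at most two points, therefore $\dim_{\mathcal{H}} VS^2_{\vec t}(E)\le \dim_{\mathcal{H}}(E\times E)$'' is not a valid inference as stated: a Lipschitz projection with finite fibers only gives $\dim(\text{image})\le\dim(\text{domain})$, the wrong direction. You need the \emph{inverse} map $(x_1,x_3)\mapsto(x_1,x_2,x_3)$ to be Lipschitz, and this fails near the tangency locus where the two circles meet non-transversally. The paper handles exactly this by decomposing $VS^2_{\vec t}(E)=\bigcup_n D_n$ according to the value of $|x_1-x_3|$, so that on each $D_n$ the two branches $(x_1,x_3)\mapsto y_i=F_i(x_1,x_3)$ are honestly Lipschitz. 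Your argument becomes correct once you insert this decomposition.

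For the Part 2 lower bound you are working much too hard, and your own caveat is warranted: the lattice construction with arithmetic control would indeed struggle to deliver Hausdorff (rather than upper Minkowski) dimension $2\alpha$ for arbitrary $\vec t$. The paper's construction is elementary and bypasses all of this: take $E_1\subset t_1 S^1$ and $E_2\subset t_2 S^1$ AD regular of dimension $\alpha$, and let $E_3$ be any $\alpha$-dimensional AD regular set containing the origin; set $E=E_1\cup E_2\cup E_3$. Then every triple $(x,0,y)$ with $x\in E_1$, $y\in E_2$ lies in $VS^2_{\vec t}(E)$, so $\dim_{\mathcal{H}} VS^2_{\vec t}(E)\ge \dim_{\mathcal{H}}(E_1\times E_2)=2\alpha$ immediately. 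I recommend replacing your lattice argument with this.
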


\begin{remark}\label{rmk: special case}
The first lower bound in Theorem \ref{main4} relies on adapting a classic construction from the discrete setting, which utilizes orthogonal circles (see section \ref{orthogonal cirles} below for details). 
However, some relation on the $t$'s, such as $t_1\leq \cdots\leq t_k$, is required. 
It is not clear, for instance, whether the set of $3$-chains of gaps $(1,2,1)$ achieves the same lower bound (see section \ref{orthogonal cirles} below for details). However, it is possible to further relax the condition $t_1\leq \cdots\leq t_k$. For instance, when $k=3$, one can in fact show that $g_d(VS_{\vec{t}}^3,\alpha)=(k+1)\alpha=4\alpha$ whenever $t_2^2<t_1^2+t_3^2$. We give a sketch of the proof of this in Section \ref{orthogonal cirles} after the proof of Theorem \ref{main4}.
\end{remark}

When $d=2$ and $0<\alpha<1$, similar deduction as in the proof of Theorem \ref{main4} indeed gives rise to certain upper and lower bound of $g_2(VS_{\vec{t}}^k,\alpha)$, for general $k\geq 3$ as well. However, since these bounds are unlikely sharp, we omit the details. Similarly, one can straightforwardly extend the same method to study the case $d=3$, $0<\alpha<2$, but new ideas seem to be needed in order to fully solve the problem in three dimensions.

When $k=1$, and when $g_d(VS_{\vec{t}}^k,\alpha)$ is defined using only those sets $E$ satisfying a stronger regularity condition (i.e. estimate (\ref{eqn: delta discrete}) for all $\delta>0$), Theorem \ref{main3} and the first result in Theorem \ref{main4} in the above are proved in \cite{OO}. It is straightforward to see that their argument in fact also works for sets that are $\{\delta_i\}$-discrete $\alpha$-regular. Therefore we will only prove the $k\geq 2$ cases.  
We also point out that, when $k=1$ and when further restricting the sets considered to be AD regular, Theorem \ref{main3} (for $\alpha>\frac{d+1}{2}$) is first obtained in \cite{EIT}.

\begin{remark}
Both Theorem \ref{main3} and Theorem \ref{main4} readily extend to the case of trees. Since in most cases, the value of $g_d(VT^k_{\vec{t}},\alpha)$ seems to depend on the exact structure of the tree $\mathcal{T}^k$, we omit the statement of those results for the sake of simplicity and only comment on the necessary changes for the tree case along the proofs of Theorem \ref{main3} and \ref{main4}. For example, one can show that when $d\geq 2$, $k\geq 2$, and $\alpha\geq\frac{d+1}{2}$, $g_d(VT^k_{\vec{t}},\alpha)=(k+1)\alpha-k$. Another example is that when $d=2$ and $0<\alpha\leq 1$, if the $k$-tree is a star, i.e. all $k$ edges share the same vertex, then $g_2(VT^k_{\vec{t}},\alpha)=k\alpha$. 
\end{remark}
\smallskip

\subsubsection{Triangles and loops}
We now turn to studying finite point configurations containing closed loops.
Compared to the chain or tree case, the main difficulty here is that the existence of loops in the graph prevents one from applying an iterative proof scheme. Our proof is based on a Fourier analytic approach that involves the estimate of the decay of the Fourier transform of the surface measure of a hypersurface that encodes the structure of the point configuration. For the sake of simplicity, we only study the case of triangles in this article, even though the proof strategy can be extended to more general graphs with loops. Define
\[
g_d(V{\rm Tri}_{\vec{t}}, \alpha):=\sup \left\{\dim_{\mathcal{H}}(V{\rm Tri}_{\vec{t}}(E)):\, E \text{ is a compact set in $\mathbb{R}^d$},\, \dim_{\mathcal{H}}(E)=\alpha \right\},
\]where
\begin{equation}\label{vtri}
V{\rm Tri}_{\vec{t}}(E):=\{(x,y,z)\in E^3:\, |x-y|=t_1, |y-z|=t_2, |z-x|=t_3\}.
\end{equation}
Again, we always assume that the set $E$ is $\{\delta_i\}$-discrete $\alpha$-regular. 

\begin{theorem}\label{main5}
Let $d\geq 3$.  Then for all $\vec{t}$,
\[
g_d(V{\rm Tri}_{\vec{t}}, \alpha)\leq \begin{cases} 3\alpha-3,& \frac{2d}{3}+1\leq\alpha\leq d,\\ d+\frac{3\alpha}{2}-\frac{3}{2},& 0<\alpha\leq \frac{2d}{3}+1. \end{cases}
\]Moreover, if $d=2$, it holds for all $\vec{t}$ that
\[
g_2(V{\rm Tri}_{\vec{t}}, \alpha)\leq \begin{cases} 
3\alpha-3,& \frac{7}{4}\leq\alpha\leq 2,\\ 2\alpha-1,& \frac{3}{2}\leq\alpha < \frac{7}{4},\\ 
 \alpha+\frac{1}{2}, & 1\leq \alpha\leq \frac{3}{2},\\
 \min\{\frac{5\alpha}{3}, \frac{\alpha(2+\alpha)}{1+\alpha}\}, & 0<\alpha\leq 1. \end{cases}
\]Furthermore, if $d\geq 6$ and $\alpha\leq \lfloor \frac{d}{3}\rfloor-1$, then $g_d(V{\rm Tri}_{\vec{t}}, \alpha)=3\alpha$ whenever $\vec{t}=(t_1,t_2,t_3)$ forms an acute triangle.
\end{theorem}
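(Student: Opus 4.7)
The theorem splits naturally into two conceptually distinct halves: a collection of upper bounds on $g_d(V{\rm Tri}_{\vec{t}},\alpha)$ in various ranges of $d$ and $\alpha$, proved by Fourier-analytic estimates of a trilinear form that encodes the three distance constraints, and a matching lower bound $3\alpha$ in the high-dimensional regime $d\geq 6$ proved by an explicit construction. My plan is to begin with the cleaner lower bound, then outline the Fourier approach for the upper bound in $d\geq 3$, and finally comment on the refinements needed in $d=2$.

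For the lower bound I adapt the orthogonal-spheres construction already used for chains (see Remark \ref{rmk: special case}) to triangles. Since $d\geq 6$, one can choose three pairwise orthogonal linear subspaces $W_1,W_2,W_3\subset\mathbb{R}^d$ with $\dim W_i=\lfloor d/3\rfloor$. Writing $t_{12}=t_1$, $t_{23}=t_2$, $t_{31}=t_3$, set
\[
r_i^2 := \tfrac{1}{2}\bigl(t_{ij}^2+t_{ik}^2-t_{jk}^2\bigr),\qquad \{i,j,k\}=\{1,2,3\}.
\]
By the law of cosines the acute-triangle hypothesis is precisely what forces each $r_i^2>0$. In each $W_i$, pick a $\{\delta_i\}$-discrete $\alpha$-regular subset $E_i$ of the sphere $\{u\in W_i:|u|=r_i\}$; this is possible since the sphere has dimension $\lfloor d/3\rfloor-1\geq\alpha$. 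For $x_i\in E_i$ and $x_j\in E_j$ with $i\neq j$, orthogonality of $W_i,W_j$ gives $\langle x_i,x_j\rangle=0$, so $|x_i-x_j|^2 = r_i^2+r_j^2 = t_{ij}^2$. Therefore $E:=E_1\cup E_2\cup E_3$ has $\dim_{\mathcal{H}} E=\alpha$, remains $\{\delta_i\}$-discrete $\alpha$-regular, and $E_1\times E_2\times E_3\subset V{\rm Tri}_{\vec{t}}(E)$, which gives $\dim_{\mathcal{H}} V{\rm Tri}_{\vec{t}}(E)\geq 3\alpha$. Combined with the trivial bound $\dim_{\mathcal{H}} V{\rm Tri}_{\vec{t}}(E)\leq 3\dim_{\mathcal{H}} E=3\alpha$, equality follows.

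For the upper bounds in $d\geq 3$, let $\mu=|E_{\delta_i}|^{-1}\mathbf{1}_{E_{\delta_i}}$ (so $\|\mu\|_\infty\lesssim\delta_i^{\alpha-d}$) and consider the trilinear form
\[
\Lambda_{\delta_i} := \iiint d\mu(x)\,d\mu(y)\,d\mu(z)\;\sigma^{\delta_i}_{t_1}(x-y)\,\sigma^{\delta_i}_{t_2}(y-z)\,\sigma^{\delta_i}_{t_3}(z-x),
\]
where $\sigma^{\delta}_{t}$ denotes a smooth approximation at thickness $\delta$ to the normalized surface measure on $\{|u|=t\}$. A standard conversion using $\{\delta_i\}$-discrete $\alpha$-regularity shows that $\Lambda_{\delta_i}\lesssim\delta_i^{s}$ implies $\overline{{\rm dim}}_{\mathcal{M}} V{\rm Tri}_{\vec{t}}(E)\leq 3\alpha-s$. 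Expanding each $\sigma^{\delta_i}_{t_j}$ on the Fourier side and integrating the resulting phase in $x,y,z$ against $\mu^{\otimes 3}$ yields
\[
\Lambda_{\delta_i} = \iiint \hat\mu(\xi_3-\xi_1)\,\hat\mu(\xi_1-\xi_2)\,\hat\mu(\xi_2-\xi_3)\,\prod_{j=1}^3\widehat{\sigma^{\delta_i}_{t_j}}(\xi_j)\,d\xi_1\,d\xi_2\,d\xi_3.
\]
Combining the decay $\bigl|\widehat{\sigma^{\delta_i}_{t_j}}(\xi)\bigr|\lesssim(1+|\xi|)^{-(d-1)/2}$ (valid up to $|\xi|\sim\delta_i^{-1}$, and rapidly decaying beyond) with the Plancherel estimate $\int|\hat\mu|^2\lesssim\delta_i^{\alpha-d}$, Cauchy-Schwarz applied in a balanced way over the three Fourier variables produces the stated bounds: when $\alpha\geq\tfrac{2d}{3}+1$, all three spherical-decay factors are absorbed and yield the optimal $3\alpha-3$; when $\alpha$ is smaller, one edge is treated trivially, producing $d+\tfrac{3\alpha}{2}-\tfrac{3}{2}$.

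The refined bounds in $d=2$ use the extra rigidity of the plane: once a pair $(x,y)$ with $|x-y|=t_1$ is fixed, the third vertex $z$ lies at the intersection of two circles (radii $t_2,t_3$ centered at $y,x$) and hence in at most two points. Coupling this observation with the best available upper bounds on $\dim\{(x,y)\in E\times E:|x-y|=t_1\}$ (namely $2\alpha-1$ for $\alpha\geq\tfrac{3}{2}$ and $\alpha+\tfrac{1}{2}$ for $1\leq\alpha<\tfrac{3}{2}$, obtained from Fourier methods akin to those in \cite{GIOW, OO}) yields the corresponding triangle thresholds. For $\alpha\leq 1$ the two competing bounds arise by balancing the trivial $3\alpha$-style estimate against a pinned-distance upper bound in low dimensions. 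The principal obstacle throughout the upper-bound half is the trilinear estimate for $\Lambda_{\delta_i}$ itself: because the three distance constraints cyclically reuse all three vertices (a loop), one cannot simply iterate a single-distance bound, and the Cauchy-Schwarz applications must delicately exploit the rotational curvature of the triangle map in order to extract the sharp $\delta_i^3$ factor.
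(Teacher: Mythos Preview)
Your lower-bound construction for $d\geq 6$ is correct and is exactly the paper's argument: with $A=r_1^2,\,B=r_2^2,\,C=r_3^2$ solving $A+B=t_1^2,\ B+C=t_2^2,\ C+A=t_3^2$, the acuteness hypothesis gives $A,B,C>0$, and placing $\alpha$-dimensional AD regular pieces of spheres in three orthogonal $\lfloor d/3\rfloor$-dimensional subspaces produces the example.

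There are, however, two genuine gaps on the upper-bound side.

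\textbf{The $d\geq 3$ bound.} Your Fourier identity for $\Lambda_{\delta_i}$ is correct, but the assertion that the pointwise spherical decay $|\widehat{\sigma_t}(\xi)|\lesssim(1+|\xi|)^{-(d-1)/2}$ together with Plancherel and ``balanced Cauchy--Schwarz'' yields $3\alpha-3$ is not a proof, and I do not believe it suffices as stated. After the change of variables that reduces your three Fourier variables to two (say $\xi,\eta$), the trilinear kernel becomes $\widehat{\sigma}_{S_{\vec t}}(\xi,\eta)$, the Fourier transform of the surface measure on $S_{\vec t}=\{(u,v):|u|=t_1,\ |v|=t_2,\ |u-v|=t_3\}\subset\mathbb{R}^{2d}$. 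The paper's argument hinges on the precise decay estimate (a generalization of \cite[Lemma~2.3]{IL})
\[
|\widehat{\sigma}_{S_{\vec t}}(\xi,\eta)|\ \lesssim\ \bigl|\xi+\tfrac{t_2}{t_1}g_\theta(\eta)\bigr|^{-1/2}\,|\xi|^{-(d-2)}\,(\sin\langle\xi,\eta\rangle)^{-\frac{d-2}{2}},
\]
valid for $|\xi|\sim|\eta|$, followed by Schur's test with an auxiliary estimate of the form $\int_{|\xi|\sim R}|\widehat{\chi}_{E_\delta}(\xi+\eta)|\,|\xi+\tfrac{t_2}{t_1}g_\theta(\eta)|^{-1/2}(\sin\langle\xi,\eta\rangle)^{-(d-2)/2}\,d\xi$. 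This is where the ``rotational curvature of the triangle map'' that you allude to in your last paragraph actually enters; it is not captured by the product of three individual spherical decays and does not fall out of an unspecified Cauchy--Schwarz. You have correctly identified the obstacle but not supplied the estimate that overcomes it.

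\textbf{The $d=2$, $\alpha\geq\frac{7}{4}$ bound.} Your reduction ``fix $(x,y)$ with $|x-y|=t_1$; then $z$ lies in at most two points'' combined with the unit-distance bound $g_2(VS^1_{t_1},\alpha)\leq 2\alpha-1$ gives only $g_2(V{\rm Tri}_{\vec t},\alpha)\leq 2\alpha-1$, which is strictly weaker than the claimed $3\alpha-3$ throughout $[\tfrac{7}{4},2)$. The paper obtains $3\alpha-3$ by a different mechanism: the Greenleaf--Iosevich bilinear estimate
\[
B:\,L^2_{-\beta_1}(\mathbb{R}^2)\times L^2_{-\beta_2}(\mathbb{R}^2)\to L^1(\mathbb{R}^2),\qquad \beta_1+\beta_2=\tfrac12,\ \beta_1,\beta_2\geq 0,
\]
where $B(g,h)(x)=\iint g(x-u)h(x-v)\sigma_a^\delta(u)\sigma_b^\delta(v)\sigma^\delta(u-v)\,du\,dv$, combined with complex interpolation (the three-lines lemma applied to $F(\beta)=\Lambda_{\vec t}^\delta(f_\delta^{-\beta},\chi_{E_\delta}\!*\!\rho_\delta,f_\delta^{\beta})$ along ${\rm Re}\,\beta=\pm\frac14$) and a direct energy estimate $\int_{E_\delta}\int_{E_\delta}|x-y|^{-7/4}\,dx\,dy\lesssim\delta^{2(2-\alpha)}$ valid precisely when $\alpha\geq\frac{7}{4}$. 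Your proposal does not contain this ingredient, and the geometric reduction you describe cannot reach $3\alpha-3$ in the plane.

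For $d=2$ and $\alpha<\frac{7}{4}$ (including the $0<\alpha\leq 1$ range), your reduction to the planar unit-distance bounds from \cite{OO} is indeed what the paper does.
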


We note that this theorem fits nicely into the current results in the field.  
In \cite{GI}, Greenleaf and Iosevich prove that if  $E$ is a compact subset of $\mathbb{R}^2$ of $\dim_{\mathcal{H}}(E) >\frac{7}{4}$, then the set of triples of distances formed by triangles in $E$, 
\begin{equation}\label{tri}
{\rm Tri}(E):=\{(|x-y|, |y-z|, |x-z|): x, y, z \in E \},
\end{equation}
has positive $3$-dimensional Lebesgue measure.  
The authors define a measure on the set  ${\rm Tri}(E)$ and proving that its density is in $L^{\infty}$.  
In a subsequent paper using group actions and an $L^2$ estimate on the density, it is shown that in all dimensions $d\geq 2$, if $\dim_{\mathcal{H}}(E) >\frac{2d+1}{3}$, then ${\rm Tri}(E)$ has positive Lebesgue measure (\cite{GILP}).  
In \cite{IL}, it is proved that for any $d\geq 4$, there exists an $\delta>0$ so that if $\dim_{\mathcal{H}}(E) >d-\delta$, then $E$ contains the vertices of an equilateral triangle.  Theorem \ref{main5} above shows that, for any $d\geq 2$, it is possible to control the number of occurrences of such triangles through an upper bound on the Hausdorff dimension.  

\begin{remark}
We note a simple transference mechanism between the unit distance problem and the distinct distance problem, which holds when we impose the additional regularity assumption that the set $E$ is AD regular. 
In this case, it is a straightforward exercise to verify that the results in Theorem \ref{main5} imply a lower bound on the set 
$$\dim_{\mathcal{H}}\left\{ (|x-y|, |y-z|, |x-z|): x,y, z \in E \right\}.$$
We omit this result, however, as superior lower bounds are obtained in \cite{Yu18}.
\end{remark}

\begin{remark}
Note that one always has the trivial estimate $g_d(V{\rm Tri}_{\vec{t}}, \alpha)$ is no more than $\min(3\alpha, 3d-3)$. Indeed, the first bound follows easily from $V{\rm Tri}_{\vec{t}}(E)\subset E^3$ and $E$ is $\{\delta_i\}$ discrete $\alpha$-regular (hence $\dim_{\mathcal{H}}(E^3)=3\alpha$), and the second bound can be obtained by considering the map $(x,y,z)\mapsto (x-y, y-z, z-x)$ and observing that the image set is contained in $t_1S^{d-1}\times t_2 S^{d-1}\times t_3 S^{d-1}$. Therefore, the bound $g_d(V{\rm Tri}_{\vec{t}}, \alpha)\leq d+\frac{3\alpha}{2}-\frac{3}{2}$ is only nontrivial if $\alpha>\frac{2d}{3}-1$. When $d=2$, the upper bound in the theorem above when $\alpha< \frac{7}{4}$ can be obtained from the trivial bound $g_2(V{\rm Tri}_{\vec{t}},\alpha)\leq g_2(VS_{t_i}^1, \alpha)$, $\forall i=1,2,3$ and the estimate for the unit distance problem in \cite[Theorem 1.3]{OO}. To see this, fix two vertices $x$ and $y$ at a distance $t_1$ apart from each other, and observe that there are at most two choices of the third vertex $z$. 
\end{remark}

\begin{remark}\label{lower triangle}
For $\alpha>d-1$, a lower bound of $\alpha+ (d-2) $ is achievable in all dimensions. 
Let $A\subset \mathbb{R}^d$ so that $\dim_{\mathcal{H}}(A)=:a \in (0,1)$.  
Set $E = A \bigcup\{A + S^{d-1}\}$, the Minkowski sum of $A$ and the unit sphere.  
Inspecting the energy integral of $E$ and observing that sum sets are Lipschitz images of Cartesian product sets, 
one may verify that $\dim_{H} (E) =: \alpha =  a + (d-1)$ (see, for instance, the work of the second listed author with K. Simon: \cite{SimonTaylordim}, \cite{SimonTaylorInt}).  
Observe that, for each $x\in A$, $\left( x+S^{d-1} \right) \bigcap E = x+S^{d-1} $.  Now, for each $x\in A$, 
\begin{align*}
&\{(y, z) \in E\times E: |x-y| = 1, |x-z|=1, \text{ and }\,\, |y-z|=1\}\\
= &\{(y, z) \in \left( x+S^{d-1} \right)   \times   \left( x+S^{d-1} \right) : |y-z|=1\},
\end{align*}
and this set clearly has Hausdorff dimension $(d-1) + (d-2)$.  
Restricting $x$ to $A$, 
it follows by Corollary \ref{cor: Fubini} in Appendix \ref{sec: Fubini} that 
 $$\dim_{\mathcal{H}}(V{\rm Tri}_{\vec{1}}(E)) \geq a + (d-1) + (d-2) = \alpha + (d-2). $$

\end{remark}

\bigskip
\subsubsection{Phong-Stein condition}\label{PS_section}
The question of whether it is possible to replace the Euclidean distance in the above discussion (for instance \eqref{singledistance}) with a more general metric was raised in \cite[Page 255]{OO}.  
In this section, we answer this question in the affirmative not only for the unit distance problem but also for the general $k$-chain. 
Moreover, we show that it is possible to 
 replace the Euclidean distance with a more general function $\phi(x,y)$ which satisfies the rotational curvature conditions introduced by Phong and Stein.  

In particular, we 
consider 
$\phi: \mathbb{R}^d \times \mathbb{R}^d \rightarrow \mathbb{R}$ to be a continuous, infinitely differentiable function, which satisfies
\begin{equation}\label{nondeg}  
|\nabla_x\phi(x,y)|\neq 0 \text{ \, \, and \, \,  } \ \  |\nabla_y\phi(x,y)|\neq 0. \end{equation}
Further, we assume that $\phi$ satisfies 
the non-vanishing Monge-Ampere determinant assumption:
\begin{equation} \label{mongeampere} det
\begin{pmatrix}
 0 & \nabla_{x}\phi \\
 -{(\nabla_{y}\phi)}^{T} & \frac{\partial^2 \phi}{dx_i dy_j}
\end{pmatrix} \end{equation} does not vanish on the set $\{(x,y): \phi(x,y)=t \}$, $t \neq 0$. 
\vskip.125in

\begin{remark} Examples of such functions include the dot product, $\phi(x,y) = x\cdot y$, as well as any norm, $\phi(x,y) = \|\cdot\|_{B}$, generated by a smooth convex body, $B$, with non-vanishing curvature.  
\end{remark}

Given any sequence of distances $\vec{t}=(t_1,\cdots, t_k)\in \mathbb{R}_+^k$,  we define 
\begin{equation}\label{singledistance phi}
VS^k_{\vec{t}, \phi}(E):=\{(x_1,\cdots,x_{k+1})\in E^{k+1}:\, \phi(x_i,x_{i+1})=t_i,\, i=1,\ldots, k,\, \{x_i\}\, \text{distinct}\}
\end{equation}
as the $(k,\phi)$-chain set generated by $E$ with prescribed gaps $\vec{t}$.

We are interested in determining the value of
\[
g_d(VS^k_{\vec{t}, \phi}, \alpha):=\sup\{\dim_{\mathcal{H}}(VS^k_{\vec{t},\phi}(E)):\, E \text{ is a compact set in $\mathbb{R}^d$},\, \dim_{\mathcal{H}}(E)=\alpha\}.
\]For ease of notation, 
we drop the subscript $\vec{t}$ throughout the discussion in this subsection.

We now turn to the main result of this subsection, in which we show that, in the case that the set $E$ is assumed to be $\{\delta_i\}$-discrete $\alpha$-regular (see equation \eqref{eqn: delta discrete} for the definition), the following analogue of Theorem \ref{main3} holds.  

\begin{theorem}\label{main3PhongStein}
Let $\alpha>0$ and $d\geq 2$.
Let $\phi: \mathbb{R}^d \times \mathbb{R}^d \rightarrow \mathbb{R}$ denote a smooth function which satisfies 
the gradient conditions in \eqref{nondeg}, as well as 
the curvature condition in \eqref{mongeampere}. 
Then, for all $k\geq 1$, 
\[
g_d(VS^k_{\phi}, \alpha)\begin{cases} = (k+1)\alpha-k, & \frac{d+1}{2}\leq \alpha\leq d,\\
\leq \frac{k(d-1)}{2}+\alpha, & \alpha\leq\frac{d+1}{2}.
\end{cases}
\]
\end{theorem}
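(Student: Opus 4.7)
The plan is to transplant the proof of Theorem \ref{main3} into the Phong--Stein framework, observing that the Euclidean proof exploits the distance function only through analytic properties of the level sets $\{y : |x-y| = t\}$ (i.e.\ spheres), and these properties survive under the hypotheses \eqref{nondeg} and \eqref{mongeampere}. For each $x \in \mathbb{R}^d$ and $t > 0$, the gradient condition \eqref{nondeg} guarantees via the implicit function theorem that the level set $M_{x,t} := \{y : \phi(x,y)=t\}$ is a smooth $(d-1)$-dimensional submanifold carrying a canonical surface measure $\sigma_{x,t}^\phi$. The Monge--Amp\`ere condition \eqref{mongeampere} is precisely the classical Phong--Stein rotational curvature condition, which implies that the associated generalized Radon transform $T_\phi^t f(x) := \int f(y)\, d\sigma_{x,t}^\phi(y)$ enjoys the same $L^2$-Sobolev smoothing as the classical spherical averaging operator, with bounds uniform in $(x,t)$ on compact sets.

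For the upper bound when $\alpha \leq \frac{d+1}{2}$, I would follow the Fourier-analytic argument used in the Euclidean case: introduce a natural measure supported on a $\delta_i$-neighborhood of $VS^k_{\vec{t},\phi}(E)$ and estimate its total mass by iterating over the $k$ edges of the chain. Each edge contributes the same $\frac{d-1}{2}$-dimensional loss as before, via the Phong--Stein smoothing of $T_\phi^{t_i}$ replacing the spherical means operator; the $\{\delta_i\}$-discrete $\alpha$-regularity assumption controls the $L^2$ density of the smoothed version of $E$ at scale $\delta_i$ in exactly the same way. Iterating yields the total bound $\frac{k(d-1)}{2} + \alpha$. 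In the regime $\alpha \geq \frac{d+1}{2}$ the analogous (sharper) iteration produces $(k+1)\alpha - k$, which is consistent with the fractal regular value theorem of \cite{EIT} for general smooth submersions satisfying the Monge--Amp\`ere non-degeneracy.

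For the matching lower bound when $\alpha \geq \frac{d+1}{2}$, I would adapt the construction that shows sharpness of Theorem \ref{main3}, with pieces of the level surface $M_{x,t}$ playing the role of pieces of the sphere. Because $\phi$ is smooth with $|\nabla_y \phi|\neq 0$ and has non-degenerate Monge--Amp\`ere determinant, each $M_{x,t}$ is locally a smooth embedded hypersurface that is $C^\infty$-diffeomorphic to a piece of $\mathbb{R}^{d-1}$ and therefore admits compact subsets of arbitrary Hausdorff dimension in $[0,d-1]$. A Cantor-type construction is then arranged so that at each stage the vertices of the next generation lie on the $\phi$-level surfaces emanating from the previous generation with the prescribed radii $t_i$; the uniformity of the curvature bound from \eqref{mongeampere} over the compact region of interest ensures the construction can be carried out at every scale and yields $E$ with $\dim_{\mathcal{H}}(E)=\alpha$ and $\dim_{\mathcal{H}}(VS^k_{\vec{t},\phi}(E)) \geq (k+1)\alpha - k$.

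The principal obstacle will be the lower bound rather than the upper bound, since the Euclidean construction often leverages rotational symmetry (and sometimes arithmetic features of spheres) which is unavailable for general $\phi$. The required symmetry must be replaced by local $C^\infty$ parameterization of the family $\{M_{x,t}\}$ together with uniform Phong--Stein bounds on compact regions; verifying that the iterative geometric construction remains compatible with these local parameterizations is the main technical point to check. The upper bound, in contrast, should reduce cleanly to Phong--Stein operator estimates and require only notational adjustments to the proof of Theorem \ref{main3}.
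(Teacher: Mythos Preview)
Your upper-bound strategy is essentially the paper's: rewrite $|D_{k,\phi}^{\delta_i}|$ as an iterated inner product, apply Cauchy--Schwarz, and reduce to a single $L^2(E_\delta)\to L^2(E_\delta)$ bound for the mollified operator $T_\phi^\delta$ (the analogue of Lemma~\ref{lemma: unit distance upp L2}), which is then applied $k$ times. One technical point you do not mention but will need: in the Euclidean case the key $L^2$ bound was proved via the pointwise decay $|\widehat{\sigma_t}(\xi)|\lesssim (1+|\xi|)^{-(d-1)/2}$, which is unavailable here because the averaging is not a convolution. The paper replaces this by a Littlewood--Paley decomposition in both input and output, invoking the Phong--Stein Sobolev smoothing (Theorem~\ref{phongsteinth}) on the near-diagonal pieces $|j-k|\le K$, and disposing of the off-diagonal pieces $|j-k|>K$ by a stationary-phase estimate (Lemma~\ref{IBP}) on the oscillatory integral kernel. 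This is standard Fourier-integral-operator machinery and is consistent with your description, but it is worth noting that the argument is not a line-by-line transcription of Section~\ref{blahsingleEuc}.

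For the lower bound you are making life harder than necessary. The paper does not build an iterative Cantor set adapted to the level surfaces $M_{x,t}$; it simply reuses the product construction of Section~\ref{section lower bound}. Take $E=C\times I$ with $C\subset\mathbb{R}^{d-1}$ AD-regular of dimension $\alpha-1$ and $I$ a short interval. The gradient condition $|\nabla_y\phi|\neq 0$ lets you choose (locally) a coordinate direction in which $\partial\phi/\partial y_d\neq 0$, so by the implicit function theorem the equation $\phi((c_1,s_1),(c_2,s_2))=t_1$ determines $s_2$ smoothly from $(c_1,c_2,s_1)$; iterating along the chain gives $s_3,\dots,s_{k+1}$. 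Thus $VS^k_{\vec t,\phi}(E)$ contains a Lipschitz image of $C^{k+1}\times I$, which has Hausdorff dimension at least $(k+1)(\alpha-1)+1=(k+1)\alpha-k$. No multi-scale construction or uniform curvature control is needed---only the implicit function theorem on a compact set. Your proposed iterative scheme would presumably also work, but it introduces bookkeeping (compatibility of parameterizations across scales) that the product construction avoids entirely.
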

\vskip.125in

The proof of Theorem \ref{main3PhongStein} can be found in Section \ref{proof of main3PhongStein}, and relies on merging the ideas introduced in \cite{ITU} and in \cite{OO}. 
Equality is attained using a simple adaptation of the proof presented in Section \ref{section lower bound}.  Note that there always holds the trivial estimate $g_d(VS^k_{\phi}, \alpha) \le (k+1)\alpha$, which is inferior to the bounds in Theorem \ref{main3PhongStein} provided  $\alpha>\frac{d-1}{2}$.

{\begin{remark} 
In Appendix \ref{parabolic}, we consider an example $E$ where $\phi$ is given by a paraboloid-like surface, and we show that the upper Minkowski dimension of $VS^k_{\phi}(E)$, for this choice of $\phi$ and $E$, is bounded below by 
$ \alpha + \frac{\alpha (d-1)k}{d+1}$.  
\end{remark}}

\begin{remark}\label{implicit_ADresults}
In the special case that the set $E$ is AD regular (as defined in Remark \ref{rmk: AD}), it is an immediate corollary of the work of the second listed author with Iosevich and Uriarte-Tuero \cite{ITU} that $\dim_{\mathcal{H}}(VS^k_{\phi}(E)) \leq (k+1) \alpha - k$, whenever $\frac{d+1}{2} < \alpha $.  Similarly, if $E$ is assumed to be $AD$ regular, then inspecting the proof of Greenleaf and Iosevich \cite{GI} and that of Iosevich and Liu \cite{IL} recovers part of Theorem \ref{main5}, 
when $\alpha>\frac{7}{4}$ ($d=2$) and when $\alpha>\frac{2d}{3}+1$ ($d\geq 4$), respectively.  
\end{remark}

Before ending the introduction, we point out that all the upper bound estimates in Theorems \ref{main3}, \ref{main4}, \ref{main5}, and \ref{main3PhongStein} work not only for Hausdorff dimension (of the set $VS^k_{\vec{t}}$ for instance) but also for the slightly larger \textit{lower Minkowski dimension}, which is straightforward to see from the proofs.

The article is organized as follows. We study the first question in Sections \ref{sec: Lebesgue} and \ref{sec: dim}: Theorem \ref{main1} is proved in Section \ref{sec: Lebesgue}, while Theorem \ref{main2}, Theorem \ref{main2.5}, and Proposition \ref{glue} are proved in Section \ref{sec: dim}. 
Sections \ref{sec: unit}, \ref{sec: triangle}, \ref{proof of main3PhongStein} are devoted to the study of the second question: chains and trees (Theorems \ref{main3} and \ref{main4}) are treated in Section \ref{sec: unit}, triangles  (Theorem \ref{main5}) appear in Section \ref{sec: triangle}, and the general $\phi$ case (Theorem \ref{main3PhongStein}) is studied in Section \ref{proof of main3PhongStein}.

\bigskip
\section{Lebesgue measure of pinned chains/ trees: Proof of Theorem \ref{main1}} \label{sec: Lebesgue}

The main ingredient of the proof is the following structure theorem, which works in all dimensions and does not require any assumption on the value of $\alpha$.

\begin{proposition}\label{structure1}
Let $d\geq 2$ and $\alpha>0$. Suppose for all pairs of compact sets $E_1, E_2\subset \mathbb{R}^d$ with positive $\alpha$-dimensional Hausdorff measure, letting $\mu_1$, $\mu_2$ be Borel probability measures supported on $E_1, E_2$ respectively which satisfy $\mu_i(B(x,r)) \lesssim r^{\alpha}$ for $i=1,2$, then
\[
\mu_2(G_{E_1}(E_2)):=\mu_2(\left\{  x \in E_2:  \left|   \Delta_x(E_1)\right| >0  \right\} )>0,
\]where $\Delta_x(E):=\{|x-y|:\,y\in E\}$ denotes the pinned distance set.

Then, for all integers $k\geq 1$, and all pairs of compact sets $E_1, E_2\subset \mathbb{R}^d$ with positive $\alpha$-dimensional Hausdorff measure, there exists $x\in E_2$ such that $|T_x^k(E_1)|_k>0$, for all $k$-trees $\mathcal{T}_v^k$  of any shape pinned at any vertex. In particular, $|S_x^k(E_1)|_k>0$.
\end{proposition}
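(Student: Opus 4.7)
Plan. I would prove the proposition by induction on $k$ with a slightly strengthened conclusion: for all pairs $(E_1,E_2)$ of compact sets of positive $\alpha$-dimensional Hausdorff measure, all Frostman measures $\mu_i$ on $E_i$ with $\mu_i(B(x,r))\lesssim r^\alpha$, and all $k$-trees pinned at any vertex, the good-pin set $\{x\in E_2:\ |T^k_x(E_1)|_k>0\}$ has positive $\mu_2$-measure. The base case $k=1$ is exactly the hypothesis of the proposition.

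For the inductive step I would distinguish two cases. If $\mathcal{T}^k$ is a star centered at the pinned vertex $v$, then (modulo a Lebesgue-null set enforcing distinctness) $T^k_x(E_1)$ coincides with $\Delta_x(E_1)^k$, which has positive $k$-dimensional Lebesgue measure for every $x\in G_{E_1}(E_2)$ --- a set of positive $\mu_2$-measure by the base case. For every other tree shape, a short combinatorial check shows that there exists a leaf $w\neq v$ whose unique neighbor $u$ is also different from $v$. Deleting $w$ produces a $(k-1)$-tree $\mathcal{T}^{k-1}$ still pinned at $v$. The base-case hypothesis applied to the pair $(E_1,E_1)$ yields a set $G':=G_{E_1}(E_1)\subset E_1$ of positive $\mu_1$-measure, which automatically has positive $\alpha$-dimensional Hausdorff measure because a Frostman measure cannot charge an $\alpha$-null set. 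Applying the inductive hypothesis to the pair $(G',E_2)$ and $\mathcal{T}^{k-1}$ then produces a set $H\subset E_2$ of positive $\mu_2$-measure on which $|T^{k-1}_x(G')|_{k-1}>0$.

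For $x\in H$, every realized edge-length vector $\vec{t}\in T^{k-1}_x(G')$ comes from some configuration $\vec{y}\in (G')^{k-1}$ of the truncated tree, and the neighbor $u=u(\vec{y})$ of the removed leaf (a specific coordinate of $\vec{y}$) lies in $G'$. Hence as $w$ ranges over $E_1$ minus the finitely many already-used vertices, the extra coordinate $|u-w|$ sweeps out a positive 1-dimensional Lebesgue measure subset of $\Delta_u(E_1)$. Fubini on $\mathbb{R}^{k-1}\times\mathbb{R}$ now gives
\[
|T^k_x(E_1)|_k \;\geq\; \int_{T^{k-1}_x(G')} \bigl|\{\,|u(\vec{y})-w|:\, w\in E_1,\ \text{distinct}\}\bigr|_1\,d\vec{t} \;>\;0,
\]
so $H$ is contained in the good-pin set for $\mathcal{T}^k$. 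This closes the induction; positivity of $\mu_2$-measure of course implies nonemptiness, which is what the proposition asks for.

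The main difficulty is the combinatorial case split: the star centered at $v$ cannot be decomposed nontrivially ($u=v$ for every leaf), so the distance hypothesis must be invoked directly, whereas every other pinned tree admits a leaf with $u\neq v$, which unlocks the Fubini + re-pinning argument. A second subtlety is that one genuinely needs the strengthened ``positive $\mu_2$-measure of good pins'' formulation of the inductive hypothesis, not mere nonemptiness, because the argument re-pins with $E_1$ replaced by the distance-good subset $G'$, and one needs $G'$ to carry a Frostman measure of the same exponent $\alpha$ (obtained by restricting $\mu_1$ to a compact subset of positive $\mu_1$-measure) that can be fed back into the induction.
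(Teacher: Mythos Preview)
Your proposal is correct and follows the same overall architecture as the paper's proof: strengthen the conclusion to ``the good-pin set has positive $\mu_2$-measure'', induct on $k$, and use Fubini to add one edge at a time. The difference is in the case split and in which leaf gets peeled.

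The paper splits on the degree of the pinned vertex $v$: if $\deg(v)=1$ it peels the edge at $v$ (first applying the $(k-1)$-tree induction \emph{inside} $E_1$, then the distance hypothesis to connect to $E_2$); if $\deg(v)\ge 2$ it splits $\mathcal{T}^k$ at $v$ into two subtrees of sizes $k_1,k_2<k$ and uses the product structure $|T^{k_1}_x|\cdot|T^{k_2}_x|$. You instead split on star-versus-non-star and, in the non-star case, always peel a leaf $w$ whose neighbor $u\neq v$; you reverse the order of the two applications (distance hypothesis inside $E_1$ first, to force $u\in G'$, then $(k-1)$-tree induction on $(G',E_2)$). Your route is slightly more uniform --- $k$ drops by exactly one each time and there is no product step --- while the paper's version avoids the separate star computation. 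The paper also enforces non-degeneracy by splitting $E_1$ into disjoint pieces $E_{1,1},E_{1,2}$ up front, whereas you handle it by deleting the finite set of already-used vertices from the choices of $w$; both are fine.

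One small point you gloss over: the proposition asks for a \emph{single} $x$ that works for every $k$-tree shape and every choice of pinned vertex, but your induction only gives, for each fixed $\mathcal{T}^k_v$, a good-pin set of positive $\mu_2$-measure. Since there are only finitely many shapes, the paper closes this by replacing $E_2$ with the (compact subset of the) good-pin set for the first shape and iterating; you should add a sentence to the same effect.
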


The notation $G_{E_1}(E_2)$ above means the ``good pins'' in $E_2$ w.r.t. $E_1$. 

\begin{proof}
We will in fact prove a stronger result: for all integers $k\geq 1$, all pairs of compact sets $E_1 ,E_2 \subset \mathbb{R}^d$ with positive $\alpha$-dimensional Hausdorff measure, letting $\mu_1$, $\mu_2$ be Borel probability measures supported on $E_1, E_2$ respectively which satisfy $\mu_i(B(x,r)) \lesssim r^{\alpha}$ for $i=1,2$, then
\begin{equation}\label{eqn: Gk}
\mu_2(G^k_{E_1}(E_2)):=\mu_2(\{x\in E_2:\, |T^k_x(E_1)|_k>0,\, \text{for all $k$-trees }\mathcal{T}_v^k\})>0.
\end{equation}

Our strategy is to prove by induction. The base case $k=1$ is precisely the same as the assumption, hence is obviously true. Assume that the desired result holds for trees whose number of edges is no greater than $k-1$.  

Let $E_1, E_2\subset \mathbb{R}^d$ be two sets such that there exist probability measures $\mu_1, \mu_2$ supported on $E_1, E_2$ respectively with $\mu_i(B(x,r)) \lesssim r^{\alpha}$ for $i=1,2$. It suffices to show (\ref{eqn: Gk}). In fact, fix any particular $k$-tree $\mathcal{T}^k=\mathcal{T}_v^k$, it suffices to prove that
\begin{equation}\label{eqn: Gk1}
\mu_2(G^k_{E_1,\mathcal{T}^k}(E_2)):=\mu_2(\{x\in E_2:\, |T^k_x(E_1)|_k>0\})>0.
\end{equation}Indeed, for any $k$ fixed, there are only finitely many possibilities of shapes and vertices to be pinned, in other words, finitely many choices of $\mathcal{T}^k$. If (\ref{eqn: Gk1}) is true for some $\mathcal{T}^k$, then one can replace $E_2$ by $G^k_{E_1,\mathcal{T}^k}(E_2)$ and iterate the argument. We omit the details and fix a choice of $\mathcal{T}^k$  (with a fixed choice of a vertex to be pinned) from now on. Moreover, by properly shrinking $E_1, E_2$ if needed, one can assume without loss of generality that $E_1, E_2$ are disjoint.

If the fixed vertex is connected to only a single edge, then one applies the induction hypothesis inside the set $E_1$. More precisely, one can find two subsets $E_{1,1}$, $E_{1,2}$ of $E_1$ and measures $\mu_{1,1}$ and $\mu_{1,2}$ supported on them respectively such that $\mu_{1,i}(B(x,r)) \lesssim r^{\alpha}$ for $i=1,2$ \footnote{The existence of such sets is guaranteed for instance by Theorem 2.3 of \cite{Falc86}.}, and
\[
\begin{split}
&\mu_{1,2}(G^{k-1}_{E_{1,1}}(E_{1,2}))\\
=&\mu_{1,2}(\{x\in E_{1,2}:\, |T^{k-1}_x(E_{1,1})|_{k-1}>0,\, \text{for all $(k-1)$-trees }\mathcal{T}^{k-1}\})>0.
\end{split}
\] 

Moreover, according to the assumption of the theorem, for the pair of sets $G^{k-1}_{E_{1,1}}(E_{1,2})$ and $E_2$, one must have
\[
\mu_2(G_{G^{k-1}_{E_{1,1}}(E_{1,2})}(E_2))=\mu_2(\{x\in E_2:\, |\Delta_x(G^{k-1}_{E_{1,1}}(E_{1,2}))|>0\})>0.
\]

It is easy to see that (\ref{eqn: Gk1}) will be implied by $G_{G^{k-1}_{E_{1,1}}(E_{1,2})}(E_2)\subset G^k_{E_1, \mathcal{T}^k}(E_2)$. To see the inclusion, fix any $x\in G_{G^{k-1}_{E_{1,1}}(E_{1,2})}(E_2)$. By definition, this means $|\Delta_x(G^{k-1}_{E_{1,1}}(E_{1,2}))|>0$, and our goal is to prove $|T^k_{x}(E_1)|_k>0$. Since there is only one edge connecting to the vertex pinned at $x$, it is straightforward to see that
\[
|T^k_x(E_1)|_k\geq \int_{t\in \Delta_x(G^{k-1}_{E_{1,1}}(E_{1,2}))}  |(T^{k}_{x}(E_1))_t|_{k-1}\,dt
\]where $(T^{k}_{x}(E_1))_t$ denotes the slice of the set $T^k_x(E_1)$ with the first variable being fixed at $t$. According to the Fubini theorem, the integral on the right hand side is well defined and one can conclude that $|T^k_x(E_1)|_k>0$ if there holds $|(T^{k}_{x}(E_1))_t|_{k-1}>0$ for all $t$. For any fixed $t$, to see why $|(T^{k}_{x}(E_1))_t|_{k-1}>0$, let $y_t$ be any point in $G^{k-1}_{E_{1,1}}(E_{1,2})$ satisfying $|x-y|=t$, and $\mathcal{T}^{k-1}$ be a $(k-1)$-tree of a particular shape (determined by the shape of $\mathcal{T}^k$) with a particular vertex to be pinned. By definition of $G^{k-1}_{E_{1,1}}(E_{1,2})$, one has $|(T^{k}_{x}(E_1))_t|_{k-1}\geq |T^{k-1}_{y_{t}}(E_{1,1})|_{k-1}>0$. Hence, according to the Fubini theorem and observing that $k$-trees produced in this way are all non-degenerate, one obtains $|T^k_{x}(E_1)|_k>0$.

Next, suppose that the fixed vertex is connected to at least two edges of the tree $\mathcal{T}^k$, then one can decompose the tree $\mathcal{T}^k$ into two sub-trees $\mathcal{T}^{k_1}_1$, $\mathcal{T}^{k_2}_2$, each containing $k_i\geq 1$ edges, $i=1, 2$, so that $\mathcal{T}^k=\mathcal{T}^{k_1}_1\cup \mathcal{T}^{k_2}_2$, and $\mathcal{T}^{k_1}_1$, $\mathcal{T}^{k_2}_2$ only share the vertex to be pinned. It is easy to see that $k_1+k_2=k$ and $k_i\leq k-1$, $i=1,2$. 

For sets $E_1,E_2$ satisfying the assumption, one further finds two disjoint subsets $E_{1,1}, E_{1,2}\subset E_1$ and probability measures $\mu_{1,1}, \mu_{1,2}$ as before. Apply the induction hypothesis to the pair $E_{1,1}, E_2$ first. Since $k_1\leq k-1$, one has 
\[
\mu_2(G^{k_1}_{E_{1,1},\mathcal{T}_1^{k_1}}(E_2))=\mu_2(\{x\in E_2:\, |T_x^{k_1}(E_{1,1})|_{k_1}>0\})>0.
\]One then applies the induction hypothesis again, this time to the pair of sets $E_{1,2}, G^{k_1}_{E_{1,1},\mathcal{T}_1^{k_1}}(E_2)$. Since $k_2\leq k-1$, one obtains
\[
\begin{split}
&\mu_2\left(G^{k_2}_{E_{1,2}, \mathcal{T}^{k_2}_2}\Big(G^{k_1}_{E_{1,1},\mathcal{T}_1^{k_1}}(E_2)\Big)\right)\\
=&\mu_2\left(\Big\{x\in G^{k_1}_{E_{1,1},\mathcal{T}_1^{k_1}}(E_2):\, |T_x^{k_2}(E_{1,2})|_{k_2}>0\Big\}\right)>0.
\end{split}
\](Strictly speaking, before applying the induction hypothesis in the second step above, one should have adjusted the measure $\mu_2$ to make it into a probability measure on the smaller set $G^{k_1}_{E_{1,1},\mathcal{T}_1^{k_1}}(E_2)$ by multiplying a constant. We omit the treatment of this issue.)

For any $x\in G^{k_2}_{E_{1,2}, \mathcal{T}^{k_2}_2}\Big(G^{k_1}_{E_{1,1},\mathcal{T}_1^{k_1}}(E_2)\Big)$, observe that 
\[
|T^k_x(E_1)|_k\geq |T_x^{k_1}(E_{1,1})|_{k_1}\cdot |T_x^{k_2}(E_{1,2})|_{k_2}>0.\]Hence,
\[
\begin{split}
\mu_2(G^k_{E_1,\mathcal{T}^k}(E_2))=&\mu_2(\{x\in E_2:\, |T_x^k(E_1)|_k>0\})\\
\geq &\mu_2\left(G^{k_2}_{E_{1,2}, \mathcal{T}^{k_2}_2}\Big(G^{k_1}_{E_{1,1},\mathcal{T}_1^{k_1}}(E_2)\Big)\right)>0.
\end{split}
\]The proof of Proposition \ref{structure1} is complete. 
\end{proof}

Then, combined with the following Lemma \ref{lem: key}, the $E_1=E_2$ case of Proposition \ref{structure1} immediately implies Theorem \ref{main1}. Lemma \ref{lem: key}, although never stated explicitly, follows from the proof of Theorem 1.2 in \cite{GIOW}.

\begin{lemma}\label{lem: key}
Let $E_1, E_2\subset \mathbb{R}^2$ be a pair of compact sets with positive $\alpha$-dimensional Hausdorff measure for some $\alpha>\frac{5}{4}$.  
Further, suppose that there exist Borel probability measures $\mu_1$ and $\mu_2$ on $E_1$ and $E_2$ respectively which satisfy $\mu_i(B(x,r)) \lesssim r^{\alpha}$ for $i=1,2$.  
 Then
\begin{equation}\label{gold}\mu_2(G_{E_1}(E_2))=\mu_2(\left\{  x \in E_2:  \left|   \Delta_x(E_1)\right| >0  \right\} )>0.\end{equation}
\end{lemma}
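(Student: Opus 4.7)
The plan is to inspect the proof of the pinned distance theorem in~\cite{GIOW} (where $\mu_1 = \mu_2 = \mu$) and verify that it extends to the present two-measure setting. Since the GIOW argument uses no symmetry between the ``pin'' and the ``endpoint,'' and each of its Frostman estimates invokes a single measure at a time, the extension is essentially bookkeeping.

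The standard reduction is to control $\int_{E_2}\|\nu^x\|_{L^2(dt)}^{\,2}\, d\mu_2(x)$, where $\nu^x := (y \mapsto |x-y|)_{\ast}\mu_1$ is the pinned distance measure. Finiteness of this integral forces $\nu^x \in L^2(dt)$, hence $\nu^x \ll dt$ and $|\Delta_x(E_1)| > 0$, for $\mu_2$-a.e.\ $x \in E_2$, which is exactly~(\ref{gold}). After applying Plancherel in $t$ and the stationary phase expansion of the Fourier transform of arclength measure on dyadic circles of radius $\tau \approx 2^{j}$, the integral decomposes into dyadic blocks of the form
$$I_j \;=\; \int \bigl|\widehat{\mu_1 \sigma_\tau}(x)\bigr|^{\,2}\, d\mu_2(x),$$
modulo harmless $\tau$-dependent weights. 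The main estimate of~\cite{GIOW} bounds $I_j$ with geometric decay in $j$ precisely under the assumption $\alpha > 5/4$, by decomposing $S^{1}$ into caps, performing a wave packet decomposition of $\mu_1$ adapted to those caps, and handling a \emph{broad} (``good tree'') part via a refined bilinear/decoupling incidence inequality and a \emph{narrow} (``bad'') part via the Frostman bound on $\mu_1$.

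Each occurrence of the integrating measure in~\cite{GIOW} can be replaced by $\mu_2$, and each occurrence of the wave-packet-decomposed measure by $\mu_1$; the Frostman bounds invoked concern only the measure they are applied to, so both substitutions are legitimate. The main obstacle, though a minor one, is confirming that the bilinear/broad incidence bound producing the exponent $5/4$ depends only on the individual Frostman dimensions of $\mu_1$ and $\mu_2$, and not on any joint condition between them. This is straightforward from inspection, since each tube in the wave packet decomposition is weighted by only one of the two measures at a time. Modulo this verification, Lemma~\ref{lem: key} follows directly from the proof in~\cite{GIOW}.
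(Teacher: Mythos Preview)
Your overall strategy---defer to \cite{GIOW} and observe that the argument there separates the roles of the pin measure and the endpoint measure, so that replacing a single $\mu$ by the pair $(\mu_1,\mu_2)$ is harmless---is exactly the paper's approach. However, your description of the GIOW mechanism is inaccurate in a way that matters for the threshold $\tfrac{5}{4}$.

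You frame the argument as bounding $\int_{E_2}\|\nu^x\|_{L^2}^2\,d\mu_2(x)$ directly, with the good/bad (broad/narrow) split occurring \emph{inside} this $L^2$ quantity. That is the Mattila--Liu $L^2$ scheme, and it yields only $\alpha>\tfrac{4}{3}$. The point of \cite{GIOW} is precisely that one does \emph{not} bound the full $L^2$ norm of $\nu^x=d_*^x(\mu_1)$. Instead one constructs a complex measure $\mu_{1,\mathrm{good}}$ (by discarding bad wave packets of $\mu_1$) and proves two separate statements: (i) $\|d_*^x(\mu_1)-d_*^x(\mu_{1,\mathrm{good}})\|_{L^1}<\tfrac{1}{1000}$ for $x$ in a set of large $\mu_2$-measure (Proposition~2.1 of \cite{GIOW}), and (ii) $\|d_*^x(\mu_{1,\mathrm{good}})\|_{L^2}<\infty$ for $\mu_2$-a.e.\ $x$ (Proposition~2.2). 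The bad part is controlled in $L^1$, not $L^2$; combining (i) and (ii) with Cauchy--Schwarz gives $|\Delta_x(E_1)|>0$. Your claim that ``the main estimate of \cite{GIOW} bounds $I_j$ with geometric decay'' for the full $\mu_1$ is therefore not what happens, and if it were true the argument would already have worked at $\tfrac{4}{3}$. The paper's sketch cites Propositions~2.1 and~2.2 explicitly and tracks this $L^1$/$L^2$ split correctly; once that is in place, the two-measure bookkeeping you describe is indeed all that remains.
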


In fact, even though the good pin point $x$ claimed in Proposition \ref{structure1} and hence Theorem \ref{main1} seems to depend on $k$, one can easily find a good pin point $x$ that works well for all $k$, as claimed in Theorem \ref{main1}. To see this, given a compact set $E$, let $E_1,E_2\subset E$ be as before, and let $G_k\subset E_2$ denote the set of ``good pins'' in $E_2$ such that $\mu_2(G_k)>0$ and $|T_x^k(E_1)|_k>0$, $\forall x\in G_k$, for all $k$-trees. Without loss of generality, one can assume that $G_k$ is compact. Then, repeat the process for $k+1$ with $E_2$ replaced by $G_k$. One can obtain a compact good pin set $G_{k+1}\subset G_k$. Iterate the process and let $G=\bigcap_{k=1}^\infty G_k\subset E_2$. By compactness, one has $G\neq\emptyset$, and it is obvious that any point $x\in G$ will guarantee that $|T_x^k(E)|_k>0$ for all $k$-tree and all $k\geq 1$.

For the sake of completeness, we conclude this section by sketching below the proof of Lemma \ref{lem: key}. We use the notation introduced in \cite{GIOW} below.

\begin{proof}[Proof of Lemma \ref{lem: key}]
The Lemma follows from the proof of the main result in \cite{GIOW}.  We briefly outline how this works. 
 
Let $\alpha$, $E_1$ and $E_2$ be as in the statement of the Lemma. Without loss of generality, assume $E_1$ and $E_2$ have distance $\gtrsim 1$.
By Frostman's Lemma, each $E_i$ supports a Borel probability measure $\mu_i$ so that 
$$\mu_i(B(x,r)) \lesssim r^{\alpha}.$$
Set $d(x,y) = |x-y|$, and, for $x$ fixed and $i\in \{1,2\}$, denote the pushforward measure 
$$\int_{\mathbb{R}}\psi(t) d_*^x(\mu_i): = \int_E \psi(|x-y|) d\mu_i(y).$$ 
Now $d_*^x(\mu_i)$ is a probability measure on $\Delta_x(E_i)$.  

Let $\mu_{1,good}$ be the complex measure (dependent on  $\mu_1$) described on page 7 of \cite{GIOW}.  
Proposition 2.1 in \cite{GIOW} (see page 8) implies that there exists a set $E_2' \subset E_2$ so that $\mu_2(E_2') >1 - \frac{1}{1,000}$, and for each $x\in E_2'$, 
\begin{equation}\label{prop1}\|  d_*^x(\mu_{1})  - d_*^x(\mu_{1, good})  \|_{L^1}  <1/1000\end{equation} and so
$$\int |d_*^x(\mu_1, good) | \geq 1 - \frac{1}{1000}.$$
Proposition 2.2 in \cite{GIOW} implies that, for $\mu_2$-almost every $x\in E_2$, 
\begin{equation}\label{prop2}  \|  d_*^x(\mu_1, good) \|^2_{L^2}  <\infty.\end{equation} 

Let $E_2''$ denote the subset of $E_2$ for which (\ref{prop2}) holds, and set $\tilde{E_2} = E_2' \cap E_2''.$  Then, $\mu_2(\tilde{E_2})>1 - \frac{1}{1000}$ and, following the logic on page 8 of \cite {GIOW}, for each $x_2\in \tilde{E_2}$, 
$$|\Delta_{x_2}(E_1)|>0.$$
\end{proof}

\begin{remark}\label{lattice example} Just as in the case of distances, it follows by the classical lattice example that $\frac{d}{2}$ is indeed the lowest possible threshold to ensure that $|S^k(E)|_k>0$, $\forall k\geq 2$ where $E\subset \mathbb{R}^d$. Consider the lattice example given in Example \ref{example: lattice}, which is a set of dimension $\alpha$. Then $|S^k(E)|_k=0$, $\forall k\geq 1$, whenever $\alpha<\frac{d}{2}$; (This fact is proved in \cite{Falc86} (see Theorem 2.4) in the case $k=1$, and the same line of reasoning yields the result for general $k\geq 1$). This, in particular, suggests that if the pinned version of the Falconer distance conjecture, which says that there exists $x\in E$ such that $|\Delta_x(E)|>0$, whenever $\dim_{\mathcal{H}}(E)>\frac{d}{2}$, is confirmed, then our method would be able to extend it to fully resolve the analogous question for chains.
\end{remark}

\bigskip
\section{Dimension of pinned chain/tree sets: Proof of Theorems \ref{main2}, \ref{main2.5}}\label{sec: dim}

\subsection{Proof of Theorem \ref{main2}}
We first prove Theorem \ref{main2}. The key observation here is that there holds a partial version of the Fubini theorem that can be used to estimate the Hausdorff dimension of a set based on the dimensions of its slices. The exact statement of the theorem is presented in Appendix \ref{sec: Fubini}. 

We begin with the following lemma, which is rephrased from \cite[Theorem 1.1]{Liu19}. We refer the reader to \cite{Liu19} for its proof. In particular, the proof depends on the core idea of good and bad measures that is recalled earlier in the sketch of the proof of Lemma \ref{lem: key}.

\begin{lemma}\label{lem: dim}
Let $E\subset \mathbb{R}^2$ be a compact set with $\dim_{\mathcal{H}}(E)>1$ and $\tau\in (0,1)$. Then,
\begin{equation}\label{eqn: Liu1}
\dim_{\mathcal{H}}\{x\in \mathbb{R}^2:\, \dim_{\mathcal{H}}(\Delta_x(E))<\tau\}\leq \max(2+3\tau-3\dim_{\mathcal{H}}(E), 2-\dim_{\mathcal{H}}(E)).
\end{equation}In particular, let $E_1$ and $E_2$ be subsets in $\mathbb{R}^2$ with $\dim_{\mathcal{H}}(E_1)=\dim_{\mathcal{H}}(E_2)>1$, then for all $\epsilon>0$, there exists $E_2'\subset E_2$ with $\dim_{\mathcal{H}}(E_2\setminus E_2')<\dim_{\mathcal{H}}(E_2)$ so that 
\begin{equation}\label{eqn: Liu2}
\dim_{\mathcal{H}}(\Delta_x(E_1))\geq \min\left(\frac{4}{3}\dim_{\mathcal{H}}(E_1)-\frac{2}{3}-\epsilon, 1\right),\quad \forall x\in E_2'.
\end{equation}

\end{lemma}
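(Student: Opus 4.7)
Lemma \ref{lem: dim} is the pinned Falconer exceptional-set bound of Liu \cite{Liu19}. The strategy I would follow is the good/bad measure decomposition of \cite{GIOW} (recalled in the proof of Lemma \ref{lem: key} above), combined with a Frostman-duality argument on the bad pin set $B_\tau := \{x \in \mathbb{R}^2 : \dim_{\mathcal{H}}(\Delta_x(E)) < \tau\}$.

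First I would set up a contradiction. Fix $s < \dim_{\mathcal{H}}(E)$ and let $\mu$ be a Frostman measure on $E$ with $\mu(B(x,r)) \lesssim r^s$. If the first inequality of the lemma failed, Frostman's lemma would yield a Borel probability measure $\nu$ supported on a compact subset of $B_\tau$ with $\nu(B(x,r)) \lesssim r^\sigma$ for some $\sigma > \max(2+3\tau-3s,\, 2-s)$. The goal is to derive a contradiction by showing that the $\nu$-averaged pinned $\tau$-energy
\[
I_\tau(d^x_*\mu_{good}) := \iint |u-v|^{-\tau}\, d(d^x_*\mu_{good})(u)\, d(d^x_*\mu_{good})(v)
\]
is finite on a $\nu$-positive set of pins $x$, since the $L^1$ closeness of $d^x_*\mu_{good}$ to $d^x_*\mu$ then forces $\dim_{\mathcal{H}}(\Delta_x(E)) \geq \tau$ on that set by standard potential theory, violating $\mathrm{supp}(\nu) \subset B_\tau$.

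Since $d^x_*\mu$ itself need not have controllable energy uniformly in $x$, I would decompose $\mu = \mu_{good} + \mu_{bad}$ at a large frequency scale $R$ exactly as in \cite{GIOW}. The good part carries a refined wave-packet/bilinear structure yielding a sharp $L^2$ bound on $d^x_*\mu_{good}$ integrated against $\nu$, while the bad part is $L^1$-small outside an exceptional pin set whose $\nu$-measure is controlled by $\sigma > 2-s$ via a Chebyshev argument; this is where the second term $2-s$ in the lemma arises. The central computation is then
\[
\int I_\tau(d^x_*\mu_{good})\, d\nu(x) \;=\; \iiint \bigl|\,|x-y| - |x-z|\,\bigr|^{-\tau}\, d\nu(x)\, d\mu_{good}(y)\, d\mu_{good}(z),
\]
in which I would carry out the inner $x$-integral first. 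Foliating $\mathbb{R}^2$ by the hyperbolae $\{x : |x-y|-|x-z|=c\}$ and using the $\sigma$-Frostman condition on $\nu$, the inner integral is controlled by $|y-z|^{\sigma-2-\tau+o(1)}$-type factors; the remaining double integral then converges against $\mu_{good}\otimes\mu_{good}$ precisely when $\sigma > 2+3\tau-3s$, via the $s$-Frostman condition on $\mu$. Letting $s \uparrow \dim_{\mathcal{H}}(E)$ yields the first inequality.

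The second assertion follows by specializing: with $\tau = \tfrac{4}{3}\dim_{\mathcal{H}}(E_1) - \tfrac{2}{3} - \epsilon$ and $s = \dim_{\mathcal{H}}(E_1) = \dim_{\mathcal{H}}(E_2)$, one has $2+3\tau-3s = -3\epsilon$ and $2-s < 1 < s$ under the hypothesis $\dim_{\mathcal{H}}(E) > 1$, so the exceptional pin set intersected with $E_2$ has Hausdorff dimension strictly less than $\dim_{\mathcal{H}}(E_2)$; removing it leaves the desired $E_2' \subset E_2$, and the cap at $1$ in the conclusion simply reflects $\Delta_x(E_1) \subset \mathbb{R}$. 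The principal obstacle is the GIOW-style sharp $L^2$ estimate on $d^x_*\mu_{good}$ localized to a $\sigma$-dimensional measure $\nu$ of pins: executing the refined bilinear/decoupling wave-packet analysis so that the exponent $\sigma$ propagates correctly and produces the sharp cutoff $2+3\tau-3s$ (rather than the weaker $2+3\tau-2s$ one would get from a naive $L^2$ argument ignoring the Frostman structure of $\nu$) is the technical heart of \cite{Liu19} and the step I expect to be hardest to carry out from scratch.
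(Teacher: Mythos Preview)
The paper itself does not prove this lemma: it simply rephrases \cite[Theorem~1.1]{Liu19} and refers the reader there, noting only that the proof rests on the good/bad measure decomposition of \cite{GIOW}. Your high-level outline (Frostman duality on the bad pin set, good/bad splitting of $\mu$, control of the pushforward energy for the good piece) is indeed the skeleton of Liu's argument, so in that sense your plan is aligned with what the paper invokes.

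Two remarks. First, a small arithmetic slip in your derivation of the second assertion: with $\tau=\tfrac{4}{3}s-\tfrac{2}{3}-\epsilon$ and $s=\dim_{\mathcal{H}}(E_1)$ one gets $2+3\tau-3s=s-3\epsilon$, not $-3\epsilon$. The conclusion is unaffected, since $s-3\epsilon<\dim_{\mathcal{H}}(E_2)$ and $2-s<\dim_{\mathcal{H}}(E_2)$ still force the exceptional set to have strictly smaller dimension than $E_2$.

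Second, the ``hyperbola foliation'' heuristic you sketch for the triple integral is not the mechanism Liu actually uses, and as you partly anticipate it would not by itself produce the sharp threshold $2+3\tau-3s$. A direct geometric bound of the type $\int ||x-y|-|x-z||^{-\tau}\,d\nu(x)\lesssim |y-z|^{\sigma-2-\tau}$ followed by an $s$-energy estimate on $\mu$ yields convergence only for $\sigma>2+\tau-s$, which is weaker. Liu's route is different: he proves an $L^2$ identity expressing $\int \|\widehat{d^x_*\mu}\|_{L^2}^2\,d\nu(x)$ in terms of circular-average quantities of $\mu$ weighted by $\nu$, and it is this identity, fed with the GIOW refined $L^2$ estimate for $\mu_{good}$, that delivers the extra saving $2(s-\tau)$ and hence the threshold $2+3\tau-3s$. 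Your closing paragraph correctly identifies that the hard work lies in this GIOW-style input, but the intermediate hyperbola step should be replaced by Liu's $L^2$ identity if you want the argument to go through as written.
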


Similarly as in the previous section, we have the following structural theorem that allows one to extend the dimension estimates of pinned distance sets to pinned tree sets. This theorem does not assume anything on the value of $\alpha$.

\begin{proposition}\label{structure2}
Let $d\geq 2$. Suppose for all compact sets $E_1, E_2\subset \mathbb{R}^d$ with $\dim_{\mathcal{H}}(E_1)=\dim_{\mathcal{H}}(E_2)>\alpha_0>0$, there exists $E_2'\subset E_2$ with $\dim_{\mathcal{H}}(E_2\setminus E_2')<\dim_{\mathcal{H}}(E_2)$ so that
\begin{equation}\label{structure2assumption}
\dim_{\mathcal{H}}(\Delta_x(E_1))\geq \gamma=\gamma(\dim_{\mathcal{H}}(E_1),d),\quad \forall x\in E_2'.
\end{equation}

Then, for all integers $k\geq 1$, and all compact sets $E_1, E_2\subset \mathbb{R}^d$ with $\dim_{\mathcal{H}}(E_1)=\dim_{\mathcal{H}}(E_2)>\alpha_0$, there exists $E_2'\subset E_2$ with $\dim_{\mathcal{H}}(E_2\setminus E_2')<\dim_{\mathcal{H}}(E_2)$ so that
\begin{equation}\label{structure2star}
\dim_{\mathcal{H}}(T_x^k(E_1))\geq k\gamma,\quad \forall x\in E_2',
\end{equation}
for all $k$-trees $\mathcal{T}^k_v$ of a particular shape pinned at a particular vertex. In particular, $\dim_{\mathcal{H}}(S_x^k(E_1))\geq k\gamma,\quad \forall x\in E_2'$.
\end{proposition}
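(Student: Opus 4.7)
The plan is to induct on $k$, in direct parallel to the proof of Proposition \ref{structure1}, with the Lebesgue Fubini theorem replaced by the Hausdorff-dimension slicing corollary stated in Appendix \ref{sec: Fubini} (Corollary \ref{cor: Fubini}). The base case $k=1$ is exactly hypothesis \eqref{structure2assumption}. For the inductive step, fix a shape $\mathcal{T}^k=\mathcal{T}^k_v$ and, as in the Lebesgue case, distinguish the two subcases where the pinned vertex $v$ is incident to a single edge versus to at least two edges. Only one fixed shape-and-pin need be handled at a time since there are finitely many possibilities for each $k$; the associated exceptional reductions can then be intersected without loss of full dimension.

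In the degree-one subcase, write $\mathcal{T}^k$ as a single edge $vw$ together with a $(k-1)$-subtree $\mathcal{T}^{k-1}$ pinned at $w$. Using \cite[Theorem 2.3]{Falc86}, extract from $E_1$ disjoint compact subsets $E_{1,1},E_{1,2}$ with $\dim_{\mathcal{H}}(E_{1,j})=\dim_{\mathcal{H}}(E_1)$ for $j=1,2$. The inductive hypothesis applied to the pair $(E_{1,1},E_{1,2})$ yields $E'_{1,2}\subset E_{1,2}$ of full dimension on which $\dim_{\mathcal{H}}(T^{k-1}_y(E_{1,1}))\geq (k-1)\gamma$. Hypothesis \eqref{structure2assumption} applied next to the pair $(E'_{1,2},E_2)$ then produces $E'_2\subset E_2$ of full dimension with $\dim_{\mathcal{H}}(\Delta_x(E'_{1,2}))\geq \gamma$ for every $x\in E'_2$. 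Fix such an $x$ and, for each $t$ in the pinned distance set, select $y_t\in E'_{1,2}$ with $|x-y_t|=t$; then the slice of $T^k_x(E_1)$ at first coordinate $t$ contains an isometric copy of $T^{k-1}_{y_t}(E_{1,1})$, which has dimension at least $(k-1)\gamma$. Corollary \ref{cor: Fubini} therefore gives $\dim_{\mathcal{H}}(T^k_x(E_1))\geq \gamma+(k-1)\gamma=k\gamma$. In the higher-degree subcase, decompose $\mathcal{T}^k=\mathcal{T}^{k_1}_1\cup\mathcal{T}^{k_2}_2$ with the two subtrees sharing only the pin $v$ and with $k_i\leq k-1$; applying the inductive hypothesis twice, first to $(E_{1,1},E_2)$ with shape $\mathcal{T}^{k_1}_1$ and then to $(E_{1,2},E'_2)$ with shape $\mathcal{T}^{k_2}_2$, produces $E''_2\subset E_2$ of full dimension on which both lower bounds $\dim_{\mathcal{H}}(T^{k_i}_x(E_{1,i}))\geq k_i\gamma$ hold simultaneously. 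Since $T^k_x(E_1)$ contains the Cartesian product $T^{k_1}_x(E_{1,1})\times T^{k_2}_x(E_{1,2})$, and Hausdorff dimension of a product is bounded below by the sum of the factor dimensions, the desired bound $k_1\gamma+k_2\gamma=k\gamma$ follows.

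The main technical hurdle is bookkeeping the exceptional sets carefully so that each reduction only discards a subset of strictly smaller dimension than $\dim_{\mathcal{H}}(E_2)$; since a fixed $k$ invokes only finitely many inductive steps and a union over finitely many shape-and-pin choices, the final good set $E'_2$ still satisfies $\dim_{\mathcal{H}}(E_2\setminus E'_2)<\dim_{\mathcal{H}}(E_2)$. A secondary concern is a careful justification of the slicing step: the slice of $T^k_x(E_1)$ at fixed first coordinate $t$ really does contain an isometric image of $T^{k-1}_{y_t}(E_{1,1})$ in the remaining $k-1$ coordinates, which is immediate from the definition of the edge-length vector once the first edge is pinned by $|x-y_t|=t$ and the other edges are drawn from the disjoint set $E_{1,1}$ (this disjointness also guarantees non-degeneracy of the assembled $k$-tree). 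Finally, the hypothesis $\dim_{\mathcal{H}}(E_1)=\dim_{\mathcal{H}}(E_2)$ is preserved under passage to full-dimensional subsets, so the inductive machinery can be iterated without any dimensional drift.
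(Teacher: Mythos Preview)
Your proposal is correct and follows essentially the same approach as the paper: induction on $k$, with the degree-one case handled by applying the base hypothesis to $(E'_{1,2},E_2)$ after the inductive hypothesis has been applied inside $E_1$, and then invoking Corollary~\ref{cor: Fubini} to add $\gamma$ to the slice dimension $(k-1)\gamma$. The paper in fact only writes out the chain case explicitly and leaves the general tree case to the reader; your treatment of the higher-degree subcase via the product inclusion $T^{k_1}_x(E_{1,1})\times T^{k_2}_x(E_{1,2})\subset T^k_x(E_1)$ and the inequality $\dim_{\mathcal{H}}(A\times B)\geq \dim_{\mathcal{H}}(A)+\dim_{\mathcal{H}}(B)$ is a clean and valid way to complete that part (and is itself a special case of Corollary~\ref{cor: Fubini}).
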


It is easy to see that the second statement of Theorem \ref{main2} follows immediately from (\ref{eqn: Liu2}) and Proposition \ref{structure2} with $E_1=E_2$ and $\gamma=\min\left(\frac{4}{3}\dim_{\mathcal{H}}(E_1)-\frac{2}{3}-\epsilon, 1\right)$.  In order to prove the first statement of Theorem \ref{main2}, one simply takes a sequence $\{\epsilon_n\}$ that converges to $0$.

Before moving on to the proof of Proposition \ref{structure2}, we momentarily take this result for granted and demonstrate the third assertion of Theorem \ref{main2}, equation \eqref{infinite_chain_dim}.  

Given a compact set $E$, suppose $1<\dim_{\mathcal{H}}(E)\leq\frac{5}{4}$. Let $E_1,E_2$ be two disjoint subsets of $E$, both with the same Hausdorff dimension as $E$. Let $\epsilon_0>0$ be a fixed small parameter such that $\dim_{\mathcal{H}}(E)-\epsilon_0>1$. Then, for all $\epsilon<\epsilon_0$, set 
$$G_1 = \left\{ x\in E_2: \,\dim_{\mathcal{H}}(  T_x^1(E_1) ) \geq \frac{4}{3}\dim_{\mathcal{H}}(E)-\frac{2}{3}-\epsilon  \right\}.$$  
By Lemma \ref{lem: dim} and Proposition \ref{structure2} applied to $E_1, E_2$ with $k=1$, $\dim_{\mathcal{H}}(E_2\setminus G_1)<\dim_{\mathcal{H}}(E)$, which in particular implies $\dim_{\mathcal{H}}( G_1 ) = \dim_{\mathcal{H}}(E).$

Let $\dim_{\mathcal{H}}(E) >\alpha_1 >\dim_{\mathcal{H}}(E)-\frac{\epsilon}{2}$ and choose $\tilde{G_1}\subset G_1$ so that $\tilde{G_1}$ is compact and $\mathcal{H}^{\alpha_1}(\tilde{G_1}) >0$ (such a choice is possible, for instance, by Corollary 4.12 in \cite{Falc95}). Then in particular, $\dim_{\mathcal{H}}(\tilde{G_1})\geq \alpha_1$, and for all $x\in \tilde{G_1}$,
\[
\dim_{\mathcal{H}}(T^1_x(E_1))\geq \frac{4}{3}\dim_{\mathcal{H}}(E)-\frac{2}{3}-\epsilon> \frac{4}{3}(\dim_{\mathcal{H}}(E)-\epsilon_0)-\frac{2}{3}.
\]

Replacing $(E_1,E_2)$ with $(E_1^{(2)},\tilde{G_1})$ where $E_1^{(2)}\subset E_1$ satisfies $\dim_{\mathcal{H}}(E_1^{(2)})=\dim_{\mathcal{H}}(\tilde{G_1})$ and repeating this process, one finds $\alpha_2$ so that $\dim_{\mathcal{H}}(\tilde{G_1})-\frac{\epsilon}{20}<\alpha_2<\dim_{\mathcal{H}}(\tilde{G_1})$ and a compact set $\tilde{G_2}\subset \tilde{G_1}$ satisfying
\begin{itemize}
\item[1. ] $\mathcal{H}^{\alpha_2}(\tilde{G_2})>0$;
\item[2. ] $\dim_{\mathcal{H}}(T^2_x(E_1))\geq \dim_{\mathcal{H}}(T^2_x(E_1^{(2)}))\geq \frac{4\cdot 2}{3}(\dim_{\mathcal{H}}(E)-\epsilon_0)-\frac{2\cdot 2}{3},\quad \forall x\in \tilde{G_2}$. 
\end{itemize}Continuing the process, there exists a sequence $\{\alpha_i\}$ and a sequence of nested compact sets $\{\tilde{G_i}\}$ satisfying 
\begin{itemize}
\item[1. ] $\dim_{\mathcal{H}}(\tilde{G}_{i-1})-\frac{\epsilon}{2}\cdot 10^{-i+1}<\alpha_i<\dim_{\mathcal{H}}(\tilde{G}_{i-1})$;
\item[2. ] $\tilde{G_i}\subset \tilde{G}_{i-1}$,\, $\mathcal{H}^{\alpha_i}(\tilde{G_i})>0$;
\item[3. ] $\dim_{\mathcal{H}}(T^i_x(E_1))\geq \dim_{\mathcal{H}}(T^i_x(E_1^{(i)}))\geq \frac{4 i}{3}(\dim_{\mathcal{H}}(E)-\epsilon_0)-\frac{2 i}{3},\quad \forall x\in \tilde{G_i}$. 
\end{itemize}
This implies, in particular, that there exists a point $x\in E$ so that $\dim_{\mathcal{H}}(  T_x^{k}(E) ) \geq \frac{4 k}{3}(\dim_{\mathcal{H}}(E)-\epsilon_0)-\frac{2 k}{3}$, for each integer $k\geq 1$.

\bigskip
\subsubsection{Proof of Proposition \ref{structure2}}
For the sake of simplicity, we only prove the chain case, as the general tree case can be treated in almost the same way with a slight modification similarly to the proof of Proposition \ref{structure1} in Section \ref{sec: Lebesgue}, which is left to the interested reader.

We prove by induction. According to the assumption, the base case $k=1$ is automatically true. Now, assume that the desired result holds for $k-1$. Let $E_1, E_2$ be the given sets in $\mathbb{R}^d$ with Hausdorff dimension $m$. Let $E_{1,1}, E_{1,2}\subset E_1$ be two subsets of $E_1$, so that $\dim_{\mathcal{H}}(E_{1,i})=m$, $i=1,2$, and the distance between them is positive. By properly shrinking $E_1, E_2$ without altering their dimension, one can assume without loss of generality that the distance from $E_{1,i}$ to $E_2$ is also positive, $i=1,2$.

By the induction hypothesis, there exists $E_{1,2}'\subset E_{1,2}$ so that $\dim_{\mathcal{H}}(E_{1,2}\setminus E_{1,2}')<m$ (in particular, $\dim_{\mathcal{H}}(E_{1,2}')=m$) and
\[
\dim_{\mathcal{H}}(S_y^{k-1}(E_{1,1}))\geq (k-1)\gamma,\quad \forall y\in E_{1,2}'.
\]

Now, applying the assumption (i.e. the base case $k=1$) to the sets $E_{1,2}'$ and $E_2$, one can find $E_2'\subset E_2$ satisfying $\dim_{\mathcal{H}}(E_2\setminus E_2')<m$, and such that
\[
\dim_{\mathcal{H}}(\Delta_x(E_{1,2}'))\geq \gamma,\quad \forall x\in E_2'.
\]

Fix $x\in E_2'$, and let $B=S_x^k(E_1)$ and $A=\Delta_x(E_{1,2}')$. For all $t_0\in \Delta_x(E_{1,2}')$, let $B_{t_0}$ denote the slice of $B$ at $t_0$ in the first variable. Observe that 
\[
S_x^k(E_1)\supset \left\{(|x-x_1|,\ldots,|x_k-x_{k+1}|):\, x_1\in E_{1,2}',\, x_2,\ldots,x_{k+1}\in E_{1,1} \,\text{distinct}\right\}.
\]Hence, one has for all $t_0\in A$ that
\[
\dim_{\mathcal{H}}(B_{t_0})\geq \dim_{\mathcal{H}}(S^{k-1}_{y_{t_0}}(E_{1,1}))\geq (k-1)\gamma,
\]for some $y_{t_0}\in E_{1,2}'$ satisfying $|y_{t_0}-x|=t_0$. Then, according to the Fubini-like theorem Corollary \ref{cor: Fubini}, this implies
\[
\dim_{\mathcal{H}}(B)\geq (k-1)\gamma+\dim_{\mathcal{H}}(A)\geq(k-1)\gamma+\gamma=k\gamma.
\]The proof is thus complete.

\begin{remark}
It is easy to see that, by following the same strategy as above, one can prove other versions of the structural theorem concerning more general point configurations, such as the \emph{kite} in Proposition \ref{glue}. Indeed, fix $x_0\in E_1$ such that the set $A: = A_{x_0}  = \{ ( |x_0-y_1|, |y_1-y_2|, |y_2 - x_0|): y_1, y_2 \in E_1\}\subset \mathbb{R}_+^3$ satisfies $\dim_{\mathcal{H}}(A)\geq \gamma_1$. Let $B$ be the set of kites pinned at $x$. Observe that, for each $(t_1, t_2, t_3) \in A$, $B_{(t_1, t_2, t_3)} \supset S^2_{y_1}(E_2)$, for some $y_1, y_2\in E_1$ such that $( |x_0-y_1|, |y_1-y_2|, |y_2 - x_0|)=(t_1,t_2,t_3)$, where $B_{(t_1, t_2, t_3)}$ denotes the slice of the set $B$ at $(t_1, t_2, t_3)$. Hence, one has $\dim_{\mathcal{H}}(B_{(t_1, t_2, t_3)})\geq \dim_{\mathcal{H}}(S^2_{y_1}(E_2))\geq \gamma_2$. One can then apply Corollary \ref{cor: Fubini} similarly as above to prove Proposition \ref{glue}. We omit the details. 
\end{remark}

\vskip.125in
\subsection{Proof of Theorem \ref{main2.5}}
We now turn to the proof of Theorem \ref{main2.5}, which also relies on the Fubini-like theorem in Appendix \ref{sec: Fubini}.

\begin{proof}[Proof of Theorem \ref{main2.5}]
This proof has a similar flavor as the above, but will make use of (\ref{eqn: Liu1}). It is direct to see that when $k=1$, the two bounds coincide with (\ref{eqn: Liu1}).

Assume that the desired result holds for $k-1$. More precisely, assume that for any set $E$ with $\dim_{\mathcal{H}}(E)>1$, and $\tau\in (0,\tau^{k-1}_0(\dim_{\mathcal{H}}(E)))$ with
\[
\tau^k_0(\dim_{\mathcal{H}}(E))=\begin{cases} \frac{4(k-1)}{3}\dim_{\mathcal{H}}(E)+\frac{5-2k}{3},&  \text{if } 1<\dim_{\mathcal{H}}(E)\leq \frac{5}{4}, \\ k,& \text{if }\frac{5}{4}< \dim_{\mathcal{H}}(E)\leq 2,\end{cases}
\]
there holds
\begin{equation}\label{eqn: dim2}
\dim_{\mathcal{H}}(\{x\in \mathbb{R}^2:\, \dim_{\mathcal{H}}(S^{k-1}_x(E))<\tau\})\leq \max(\Gamma^{k-1}(\tau,\dim_{\mathcal{H}}(E)),2-\dim_{\mathcal{H}}(E)),
\end{equation}where
\[
\begin{split}
\Gamma^{k}(\tau,\dim_{\mathcal{H}}(E)):= &\begin{cases} 2k+3\tau+(1-4k)\dim_{\mathcal{H}}(E), & \text{if }1<\dim_{\mathcal{H}}(E)\leq \frac{5}{4},\\
5-3k+3\tau-3\dim_{\mathcal{H}}(E), & \text{if }\frac{5}{4}<\dim_{\mathcal{H}}(E)\leq 2.\end{cases}
\end{split}
\]

Our goal is to show that for all $\tau\in (0, \tau_0^k)$
\[
\dim_{\mathcal{H}}(\{x\in \mathbb{R}^2:\, \dim_{\mathcal{H}}(S_x^k(E))<\tau\})\leq \max(\Gamma^k(\tau,\dim_{\mathcal{H}}(E)),2-\dim_{\mathcal{H}}(E)).
\]
In particular, it suffices to show that for all $\epsilon>0$,
\[
\dim_{\mathcal{H}}(\{x\in \mathbb{R}^2:\, \dim_{\mathcal{H}}(S_x^k(E))<\tau\})\leq \max(\Gamma^k(\tau,\dim_{\mathcal{H}}(E)),2-\dim_{\mathcal{H}}(E))+\epsilon.
\]

First, consider the range $\dim_{\mathcal{H}}(E)\leq\frac{5}{4}$ and fix $\tau\in (0, \frac{4(k-1)}{3}\dim_{\mathcal{H}}(E)+\frac{5-2k}{3})$. 

Fix $\epsilon>0$, it suffices to show that given any set $F\subset \mathbb{R}^2$ satisfying
\begin{equation}\label{eqn: dim5}
\dim_{\mathcal{H}}(F)>\max(\Gamma^k(\tau,\dim_{\mathcal{H}}(E)),2-\dim_{\mathcal{H}}(E))+\epsilon,
\end{equation}there exists a point $x\in F$ such that $\dim_{\mathcal{H}}(S_x^k(E))\geq \tau$.

Without loss of generality, assume the distance between $E,F$ is positive. Note that the induction hypothesis implies that there exists a subset $E'\subset E$ with the same dimension as $E$ so that 
\begin{equation}\label{eqn: dim4}
\dim_{\mathcal{H}}(S^{k-1}_y(E))\geq \min\left(\frac{4(k-1)}{3}\dim_{\mathcal{H}}(E)-\frac{2(k-1)}{3}-\frac{\epsilon}{3}, k-1\right),\quad \forall y\in E'.
\end{equation}Indeed, this can be derived from (\ref{eqn: dim2}), with ``$\tau$'' in the display being chosen such that $\max(\Gamma^{k-1}(\tau,\dim_{\mathcal{H}}(E)),2-\dim_{\mathcal{H}}(E))+\epsilon=\dim_{\mathcal{H}}(E)$ (not necessarily the same $\tau$ that we fixed above).

Define
\[
\tau_1:=\begin{cases} \frac{2}{3}\dim_{\mathcal{H}}(E)+\frac{\epsilon}{3},& \text{if }\tau\leq \frac{4k-2}{3}\dim_{\mathcal{H}}(E)-\frac{2k-2}{3},\\ \frac{2k-2}{3}+\tau-\frac{4k-4}{3}\dim_{\mathcal{H}}(E)+\frac{\epsilon}{3}, & \text{if } \tau> \frac{4k-2}{3}\dim_{\mathcal{H}}(E)-\frac{2k-2}{3}.\end{cases}
\]Note that the point here is to make sure that 
\[
\max(\Gamma^k(\tau, \dim_{\mathcal{H}}(E)), 2-\dim_{\mathcal{H}}(E))+\epsilon=\max(\Gamma^{1}(\tau_1,\dim_{\mathcal{H}}(E)),2-\dim_{\mathcal{H}}(E)).
\]Because of the bound of $\tau$, one also has from the definition that $\tau_1\in (0,1)$ (by letting $\epsilon$ sufficiently small depending on $\tau$). Note that this is the only place in the proof where the upper bound $\tau<\tau_0^k$ comes into play.

Recalling the definition of $F$, we have
\[
\dim_{\mathcal{H}}(F)>\max(\Gamma^1(\tau_1,\dim_{\mathcal{H}}(E')),2-\dim_{\mathcal{H}}(E')).
\]By assumption of the theorem, there thus exists a point $x\in F$ such that
\[
\dim_{\mathcal{H}}(\Delta_x(E'))\geq \tau_1.
\]

From the construction of $E'$, for all $y\in E'$, (\ref{eqn: dim4}) holds true. Letting $B=S^k_x(E)$ and $A=\Delta_x(E')$, one sees that for all $t_0\in A$, the slice of the set $B$ at $t_0$ in the first variable satisfies
\[
\begin{split}
\dim_{\mathcal{H}}(B_{t_0})\geq &\dim_{\mathcal{H}}(S^{k-1}_{y_{t_0}}(E))\\
\geq &\min\left(\frac{4(k-1)}{3}\dim_{\mathcal{H}}(E)-\frac{2(k-1)}{3}-\frac{\epsilon}{3}, k-1\right)\\
=&\frac{4(k-1)}{3}\dim_{\mathcal{H}}(E)-\frac{2(k-1)}{3}-\frac{\epsilon}{3},
\end{split}
\]for some $y_{t_0}\in E'$ satisfying $|x-y_{t_0}|=t_0$. Therefore, according to Corollary \ref{cor: Fubini}, one obtains
\[
\dim_{\mathcal{H}}(B)\geq \frac{4(k-1)}{3}\dim_{\mathcal{H}}(E)-\frac{2(k-1)}{3}-\frac{\epsilon}{3}+\tau_1\geq \tau.
\]The proof of the first case is complete.

Second, assume $\dim_{\mathcal{H}}(E)>\frac{5}{4}$ and $\tau\in (0,k)$. Again, the goal is to show that there exists $x\in F$ so that $\dim_{\mathcal{H}}(S_x^k(E))\geq \tau$. Same as before, the induction hypothesis implies for each $\epsilon>0$ the existence of a set $E'\subset E$ which satisfies (\ref{eqn: dim4}). Since $\dim_{\mathcal{H}}(E)>\frac{5}{4}$, by taking $\epsilon$ sufficiently small, one has in fact $\dim_{\mathcal{H}}(S_y^{k-1}(E))\geq k-1$, $\forall y\in E'$. Now, let
\[
\tau_1:=\begin{cases} \min(\frac{2}{3}\dim_{\mathcal{H}}(E),1),& \text{if }\tau\leq k-1,\\ 1-k+\tau, & \text{if } \tau> k-1.\end{cases}
\]It is straightforward to check that $\tau_1\in (0,1)$ and 
\[
\max(\Gamma^k(\tau, \dim_{\mathcal{H}}(E)), 2-\dim_{\mathcal{H}}(E))=\max(\Gamma^{1}(\tau_1,\dim_{\mathcal{H}}(E)),2-\dim_{\mathcal{H}}(E)).
\]

Therefore, there is $x\in F$ so that $\dim_{\mathcal{H}}(\Delta_x(E'))\geq \tau_1$. Applying Corollary \ref{cor: Fubini} and arguing as above, one has
\[
\dim_{\mathcal{H}}(S_x^k(E))\geq k-1+\tau_1\geq \tau,
\]which completes the proof.
\end{proof}


\bigskip
\section{Chains/trees with prescribed gaps: Proof of Theorems \ref{main3}, \ref{main4}}\label{sec: unit}
In this section we study the dimension of set of chains/trees that have prescribed gaps. Many of the upper bound estimates below extend the work of Eswarathasan--Iosevich--Taylor \cite{EIT} and Oberlin--Oberlin \cite{OO}, where the case $k=1$, i.e. the unit distance set, was considered (for slightly more restrictive classes of sets $E$). On the other hand, we will also see below that sometimes the chain/tree cases display very different properties compared to the distance case. For instance, in $\mathbb{R}^2$, when $0<\alpha\leq 1$, the best known estimate for the unit distance set is 
\[
\frac{3\alpha}{2}\leq g_2(VS_1^1, \alpha)\leq \min\left(\frac{5\alpha}{3}, \frac{\alpha(2+\alpha)}{1+\alpha}\right),
\]according to \cite{OO}. However, the $2$-chain set displays distinct features and we can completely determine the value of $g_2(VS^2_{\vec{t}},\alpha)$ without first estimating the unit distance set.

\subsection{Proof of Theorem \ref{main3}}

\subsubsection{Upper bound}\label{blahsingleEuc}
Let $k\geq 2$ and $E\subset \mathbb{R}^d$ be a compact $\{\delta_i\}$-discrete $\alpha$-regular set that is contained in the unit ball and has Hausdorff dimension $\alpha$. Given any $\delta_i$, we will show that $\forall \epsilon>0$, 
\begin{equation}\label{eqn: unit distance upp main}
\begin{split}
|D^{\delta_i}_k|:=&|\{(x_1,\cdots,x_{k+1})\in E_{\delta_i}^{k+1}:\, t_j-2\delta_i\leq |x_j-x_{j+1}|\leq t_j+2\delta_i,\, j=1,\ldots,k  \}|\\
\lesssim &\delta_i^{(k+1)d-u(k,d,\alpha)-\epsilon},
\end{split}
\end{equation}where
\[
u(k,d,\alpha):=\begin{cases} (k+1)\alpha-k, & \frac{d+1}{2}\leq \alpha\leq d,\\ \frac{kd}{2}+\alpha-\frac{k}{2},& \alpha\leq \frac{d+1}{2}.\end{cases}
\]
Since $D^{\delta_i}_k$ contains the $\delta_i$-neighborhood of $VS^k_{\vec{t}}(E)$, estimate (\ref{eqn: unit distance upp main}) would imply that $\dim_{\mathcal{H}}(VS^k_{\vec{t}}(E))\leq\underline{\rm dim}_{\mathcal{M}}(VS^k_{\vec{t}}(E))\leq u(k,d,\alpha)$, hence the desired upper bound in Theorem \ref{main3} follows.  
Indeed, letting $\gamma$ denote the lower Minkowski dimension of $VS^k_{\vec{t}}(E)$ and taking $\delta_i$ sufficiently small, it follows that $|  D^{\delta_i}_k|  \sim   \delta_i^{(k+1)d-\gamma}$, where $|\cdot |$ denotes the Lebesgue measure.  Thus, our goal is reduced to attaining an upper bound, in terms of a power of $\delta_i$, on $|  D^{\delta_i}_k|$. To simplify the notation, we will write $\delta=\delta_i$ in the following.  

Without loss of generality, assume that $E=-E$. Write $E_\delta=E+B(0,\delta)$ and define 
\[
A_{t,\delta}:=\{x\in \mathbb{R}^d:\, t-2\delta\leq |x|\leq t+2\delta\}.
\]
We re-write the set $D^\delta_k$ as follows:
\[
\begin{split}
&|D^\delta_k|\\
=&\int_{E_\delta}\cdots\int_{E_\delta} \prod_{i=1}^{k} \chi_{A_{t_i,\delta}}(x_i-x_{i+1})\,dx_1\cdots dx_{k+1}\\
=& \int_{E_\delta}\cdots\int_{E_\delta} \left(\prod_{i=1}^{k-1} \chi_{A_{t_i,\delta}}(x_i-x_{i+1})\right) 
\chi_{E_{\delta}}
\ast 
\chi_{A_{t_k,\delta}}
(x_k)\,dx_1\cdots dx_{k}\\
=& \int_{E_\delta}\cdots\int_{E_\delta} \left(\prod_{i=1}^{k-2} \chi_{A_{t_i,\delta}}(x_i-x_{i+1})\right) 
\left(
f_1
\chi_{E_\delta} \ast \chi_{A_{t_{k-1},\delta}}(x_{k-1})\right)\,dx_1\cdots dx_{k-1}\\
=& \cdots= \langle f_k, \chi_{E_\delta}\rangle,
\end{split}
\]where we have defined $f_1=\chi_{E_\delta}\ast \chi_{A_{t_k,\delta}}$, and $f_{n+1}=(f_n\chi_{E_\delta})\ast \chi_{A_{t_{k-n},\delta}}$, $\forall 2\leq n\leq k-1$.

The main estimate we will prove is the following $L^2$ bound:
\begin{lemma}\label{lemma: unit distance upp L2}
Let $f\in L^2(E_{\delta})$, and $t\sim 1$, $\delta>0$ as before. Then for all $\epsilon>0$,
\begin{equation}\label{eqn: unit distance upp L2}
\left(\int_{E_\delta} |(f\chi_{E_\delta})\ast \chi_{A_{t,\delta}}(x)|^2 \,dx \right)^{1/2}\leq C_\epsilon \delta^{\beta(d,\alpha)-\epsilon}\left(\int_{ E_\delta} |f(x)|^2   \,dx \right)^{1/2}, 
\end{equation}
where
\[
\beta(d,\alpha):=\begin{cases} d-\alpha+1,& \frac{d+1}{2}\leq \alpha\leq d,\\ \frac{d+1}{2},& \alpha\leq \frac{d+1}{2}.\end{cases}
\]
\end{lemma}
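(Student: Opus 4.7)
The plan is to bound the operator $Tf(x):=\chi_{E_\delta}(x)\cdot\bigl((f\chi_{E_\delta})\ast\chi_{A_{t,\delta}}\bigr)(x)$ on $L^2(E_\delta)$ via a combination of the $T^*T$-trick, Schur's test, and Fourier analysis that couples the curvature of the sphere to the $\{\delta_i\}$-discrete $\alpha$-regular structure of $E$.

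First I would note that $\chi_{A_{t,\delta}}$ is real and even, so $T$ is self-adjoint and $\|T\|^2=\|T^2\|_{\mathrm{op}}$. Applying Schur's test to the kernel
\[
K(y,z)=\chi_{E_\delta}(y)\chi_{E_\delta}(z)\int \chi_{E_\delta}(x)\chi_{A_{t,\delta}}(y-x)\chi_{A_{t,\delta}}(x-z)\,dx
\]
of $T^2$, and using evenness of $\chi_{A_{t,\delta}}$ to rewrite the inner $z$-integral, the lemma is reduced to the pointwise estimate
\[
\sup_{y\in E_\delta} (F\chi_{E_\delta})\ast\chi_{A_{t,\delta}}(y)\lesssim_\epsilon \delta^{2\beta(d,\alpha)-2\epsilon},\qquad F:=\chi_{E_\delta}\ast\chi_{A_{t,\delta}}.
\]

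I would then bring in two Fourier-analytic ingredients. The first is the classical stationary-phase bound $|\widehat{\chi_{A_{t,\delta}}}(\xi)|\lesssim \delta\,(1+|\xi|)^{-(d-1)/2}$, reflecting the $(d-1)$-dimensional curvature of $\{|x|=t\}$. The second is the Frostman-type energy estimate
\[
\int |\widehat{\chi_{E_\delta}}(\xi)|^2\,|\xi|^{s-d}\,d\xi\lesssim_\epsilon \delta^{2(d-\alpha)-\epsilon},\qquad s<\alpha,
\]
obtained by viewing a suitably rescaled $\chi_{E_\delta}$ as a near-Frostman measure of exponent $\alpha$ (via the $\{\delta_i\}$-discrete regularity hypothesis) and invoking the Fourier-analytic characterization of $s$-energy. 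Applying Plancherel to $(F\chi_{E_\delta})\ast\chi_{A_{t,\delta}}(y)$ and splitting the frequency domain into $|\xi|\le 1$, where one uses the trivial $L^\infty$ bound $|\widehat{\chi_{E_\delta}}(\xi)|\le |E_\delta|\lesssim_\epsilon \delta^{d-\alpha-\epsilon}$, and $|\xi|>1$, where one combines the two displayed inputs, yields the desired pointwise control.

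The two regimes of $\beta(d,\alpha)$ reflect whether the Frostman energy estimate is strong enough to absorb the full spherical weight $|\widehat{\chi_{A_{t,\delta}}}|^2\lesssim \delta^2(1+|\xi|)^{-(d-1)}$. When $\alpha\ge (d+1)/2$, the choice $s=1<\alpha$ is admissible and absorbs the entire weight, extracting the maximal gain $\beta=d-\alpha+1$. When $\alpha<(d+1)/2$, the energy estimate saturates before the full weight can be absorbed, and one must fall back on the trivial bound $|\widehat{\chi_{A_{t,\delta}}}|\le |A_{t,\delta}|\lesssim \delta$ over a portion of the frequency range, yielding only $\beta=(d+1)/2$. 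I expect the main technical difficulty to lie in executing this frequency split cleanly enough to produce both the correct exponent $2\beta$ and \emph{uniform-in-$y$} pointwise control on $(F\chi_{E_\delta})\ast\chi_{A_{t,\delta}}(y)$; heuristically the required estimate encodes the Falconer-type expectation that $|E_\delta\cap A_{t,\delta}(y)|\sim \delta^{d-\alpha+1}$ on average, and passing from this averaged statement to a pointwise bound is precisely where the critical threshold $\alpha=(d+1)/2$ emerges.
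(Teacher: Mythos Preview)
There is a genuine gap in your reduction. The Schur-test step reduces the $L^2\to L^2$ bound to a \emph{pointwise} (supremum over $y\in E_\delta$) estimate on $(F\chi_{E_\delta})\ast\chi_{A_{t,\delta}}(y)$, and this pointwise bound is in general false for $\{\delta_i\}$-discrete $\alpha$-regular sets, even in the regime $\alpha\ge (d+1)/2$. Take $d=3$, $\alpha=2$, and $E=\{0\}\cup S^2$; one checks directly that $E$ satisfies the regularity hypothesis. With $y=0\in E_\delta$, the set $E_\delta\cap A_{1,\delta}(0)$ is the full $\delta$-thickened sphere, of measure $\sim\delta$, and for each $x$ in that thickened sphere one has $F(x)=|E_\delta\cap A_{1,\delta}(x)|\sim\delta^2$ (two thickened unit spheres at distance $\sim 1$ meet in a thickened circle). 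Hence $(F\chi_{E_\delta})\ast\chi_{A_{1,\delta}}(0)\sim\delta\cdot\delta^2=\delta^3$, whereas your target is $\delta^{2\beta}=\delta^4$. Schur therefore yields only $\|T\|_{\mathrm{op}}\lesssim\delta^{3/2}$, while the lemma requires $\|T\|_{\mathrm{op}}\lesssim\delta^2$, and one can verify the lemma does hold for this $E$ (test, e.g., $f=\chi_{E_\delta}$).

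The underlying obstacle is that your two Fourier inputs are $L^2$ in nature---the energy integral controls $\int|\widehat{\chi_{E_\delta}}(\xi)|^2|\xi|^{s-d}\,d\xi$---but the pointwise bound you have reduced to would, after Fourier inversion, require $L^1$-type control on $|\widehat{F\chi_{E_\delta}}|\cdot|\widehat{\chi_{A_{t,\delta}}}|$, which the energy estimate does not supply. The paper avoids this by staying in $L^2$ throughout: it dualizes against $g\in L^2(E_\delta)$, rewrites the pairing as $\langle (f\chi_{E_\delta})\ast(g\chi_{E_\delta}),\chi_{A_{t,\delta}}\rangle$, dominates $\chi_{A_{t,\delta}}$ by $\delta\,\rho_\delta\ast\rho_\delta\ast\sigma_t$, and then applies Plancherel together with Cauchy--Schwarz and the decay $|\widehat{\sigma_t}(\xi)|\lesssim(1+|\xi|)^{-(d-1)/2}$. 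This reduces symmetrically to a weighted $L^2$ Fourier bound (an energy integral) for each of $(f\chi_{E_\delta})\ast\rho_\delta$ and $(g\chi_{E_\delta})\ast\rho_\delta$, never demanding any pointwise control in physical space; this is exactly why the concentration of $E$ on a single sphere is harmless there but fatal to the Schur approach.
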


Note that a special case of this estimate when $f=1$ was obtained in \cite{OO}. We first show how Lemma \ref{lemma: unit distance upp L2}, implies the desired (\ref{eqn: unit distance upp main}). Applying the Cauchy-Schwarz inequality, one has
\[
\begin{split}
\langle f_k, \chi_{E_\delta}\rangle=&\langle (f_{k-1}\chi_{E_\delta})\ast\chi_{A_{t_1},\delta}, \chi_{E_\delta}\rangle\\
\leq & \left(\int_{\mathbb{R}^d} |(f_{k-1}\chi_{E_\delta})\ast \chi_{A_{t_1,\delta}}(x)|^2 \chi_{E_\delta}(x)\,dx \right)^{1/2} |E_\delta|^{1/2}\\
\leq &C_\epsilon \delta^{\beta(d,\alpha)-\epsilon} \left(\int_{\mathbb{R}^d} |f_{k-1}(x)|^2\chi_{E_\delta}(x)\,dx \right)^{1/2}|E_\delta|^{1/2},
\end{split}
\]where we have applied (\ref{eqn: unit distance upp L2}) in the last step. 

By applying (\ref{eqn: unit distance upp L2}) iteratively, one ultimately obtains
\[
\langle f_k, \chi_{E_\delta}\rangle\lesssim_\epsilon \delta^{k(\beta(d,\alpha)-\epsilon)}|E_\delta|\lesssim \delta^{k(\beta(d,\alpha)-\epsilon)+d-\alpha-\epsilon},
\]where we have recalled the definition of the $\{\delta_i\}$-discrete $\alpha$-set. Estimate (\ref{eqn: unit distance upp main}) thus follows immediately.

\subsubsection{Proof of Lemma \ref{lemma: unit distance upp L2}  }
We now turn to proving Lemma \ref{lemma: unit distance upp L2}. 

Let $g$ be a testing function satisfying $\int |g|^2\chi_{E_\delta}=1$, then it suffices to show that
\begin{equation}\label{eqn: unit dist 1}
\left| \int_{E_\delta} \big[(f\chi_{E_\delta})\ast \chi_{A_{t,\delta}} \big](x)g(x)    \,dx \right|\lesssim_\epsilon \delta^{\beta(d,\alpha)-\epsilon} \left(\int_{E_\delta} |f|^2\right)^{1/2}.
\end{equation}

Without loss of generality, assume both $f$ and $g$ are nonnegative. Let $\rho$ be a symmetric Schwartz function satisfying
\begin{equation}\label{rho}
\chi_{B(0,C)}\leq \rho(x)\leq \sum_{j=1}^\infty 2^{-jd}\chi_{B(0,2^j)},
\quad \chi_{B(0,C')}\leq  |\hat{\rho}(\xi)|  \leq \chi_{B(0,2C')},
\end{equation}and denote $\rho_r(x)=r^{-d}\rho(\frac{x}{r})$.

Then, 
\[
\begin{split}
&\left| \int \big[(f\chi_{E_\delta})\ast \chi_{A_{t,\delta}} \big](x)g(x)\chi_{E_\delta}(x)\,dx \right|\\
=&\left| \langle (f\chi_{E_\delta})\ast (g\chi_{E_\delta}), \chi_{A_{t,\delta}}\rangle \right|\\
\lesssim &\delta \left|\langle (f\chi_{E_\delta})\ast (g\chi_{E_\delta}), \rho_\delta\ast \rho_\delta\ast \sigma_t \rangle  \right|,
\end{split}
\]where $\sigma_t$ denotes the surface measure on the sphere in $\mathbb{R}^d$ of radius $t$ (not normalized). Recalling that $t\sim 1$ and applying Plancherel, one has that the above is bounded by
\[
\begin{split}
\lesssim &\delta \int_{B(0,\frac{2C'}{\delta})} \left|\left[(f\chi_{E_\delta})\ast\rho_\delta\right]^\wedge(\xi)\right| \left|\left[(g\chi_{E_\delta})\ast\rho_\delta\right]^\wedge(\xi)\right| \,\frac{d\xi}{(1+|\xi|)^{\frac{d-1}{2}}}\\
\leq &\delta \left(\int_{B(0,\frac{2C'}{\delta})} \left| \left[(f\chi_{E_\delta})\ast\rho_\delta\right]^\wedge(\xi) \right|^2\,\frac{d\xi}{(1+|\xi|)^{\frac{d-1}{2}}}  \right)^{1/2}\cdot\\
&\qquad\qquad \left(\int_{B(0,\frac{2C'}{\delta})} \left| \left[(g\chi_{E_\delta})\ast\rho_\delta\right]^\wedge(\xi) \right|^2\,\frac{d\xi}{(1+|\xi|)^{\frac{d-1}{2}}}  \right)^{1/2}.
\end{split}
\]Here, the first inequality follows from the estimate $|\widehat{\sigma_t}(\xi)|\lesssim (1 + |\xi|)^{-\frac{d-1}{2}}$ (see for instance \cite[Corollary 6.7]{Wolffbook}).

Hence, estimate (\ref{eqn: unit dist 1}) results from the following estimate.

\begin{lemma}\label{lemma: unit dist 2} For $f\in L^2(E_{\delta})$, 
\begin{equation}\label{eqn: unit dist 2}
\left(\int_{B(0,\frac{2C'}{\delta})} \left| \left[(f\chi_{E_\delta})\ast\rho_\delta\right]^\wedge(\xi) \right|^2\,
\frac{d\xi}{     (1+|\xi|)^{\frac{d-1}{2}}}  \right)^{1/2}
\lesssim_\epsilon \delta^{\frac{\beta(d,\alpha)-1}{2}-\epsilon}\left(\int_{E_\delta} |f|^2\right)^{1/2},
\end{equation}
where
\[
\beta(d,\alpha):=\begin{cases} d-\alpha+1,& \frac{d+1}{2}\leq \alpha\leq d,\\ \frac{d+1}{2},& \alpha\leq \frac{d+1}{2}.\end{cases}
\]
\end{lemma}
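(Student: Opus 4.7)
Squaring the inequality, it suffices to establish
\[
I := \int_{|\xi|\le 2C'/\delta} |\hat g(\xi)|^2 \frac{d\xi}{(1+|\xi|)^{(d-1)/2}} \lesssim_\epsilon \delta^{\beta(d,\alpha)-1-\epsilon}\,\|g\|_{L^2}^2,
\]
where $g=f\chi_{E_\delta}$. The natural plan is a Littlewood--Paley decomposition in the radial frequency variable: fix a smooth dyadic partition $\{\phi_R\}$ with $\phi_R(\xi)=\phi_1(\xi/R)$ supported on $|\xi|\sim R$, for dyadic $R\in[1,C/\delta]$, plus a low-frequency bump on $|\xi|\le 1$. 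The absolute factor $|\hat\rho(\delta\xi)|^2$ is uniformly $\lesssim 1$, so it can be harmlessly absorbed. This reduces the problem to controlling the shell quantities $S_R:=\int|\hat g(\xi)|^2\phi_R(\xi)\,d\xi$, summed against the weight $R^{-(d-1)/2}$. The low-frequency piece is dominated by $\|g\|_{L^2}^2$ via Plancherel, which is acceptable since $\beta(d,\alpha)\ge 1$ in all cases.

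The heart of the argument is the shell estimate
\[
S_R \;\lesssim_\epsilon\; (R\delta)^{d-\alpha-\epsilon}\,\|g\|_{L^2}^2, \qquad 1\le R\le C/\delta.
\]
Since $\phi_R$ is a smooth compactly supported bump, its inverse Fourier transform $K_R(x)=\check\phi_R(x)=R^d\check\phi_1(Rx)$ is Schwartz, obeying $|K_R(x)|\lesssim_N R^d(1+R|x|)^{-N}$ for any $N$. By Plancherel, $S_R=\iint g(x)\bar g(y)K_R(x-y)\,dx\,dy$, and Schur's test yields
\[
S_R \le \|g\|_{L^2}^2 \sup_x \int |K_R(x-y)|\,\chi_{E_\delta}(y)\,dy.
\]
A dyadic decomposition of the $y$-integral in $|x-y|$, combined with the $\{\delta_i\}$-discrete $\alpha$-regular bound $|E_\delta\cap B(x,r)|\lesssim_\epsilon (r/\delta)^{\alpha+\epsilon}\delta^{d-\epsilon}$ for $r\ge\delta$, yields $\sup_x\int|K_R(x-y)|\chi_{E_\delta}(y)\,dy\lesssim_\epsilon (R\delta)^{d-\alpha-\epsilon}$: the ball $|x-y|\le 1/R$ contributes $R^d|E_\delta\cap B(x,1/R)|\lesssim (R\delta)^{d-\alpha-\epsilon}$, and the outer annuli sum geometrically using the rapid decay of $K_1$ for $N$ chosen sufficiently large (larger than $\alpha$).

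Substituting and summing dyadically,
\[
I \;\lesssim_\epsilon\; \|g\|_{L^2}^2\,\delta^{d-\alpha-\epsilon}\sum_{1\le R\le C/\delta} R^{(d+1)/2-\alpha-\epsilon}.
\]
The geometric series exhibits a phase transition at $\alpha=(d+1)/2$, precisely matching the definition of $\beta(d,\alpha)$. For $\alpha>(d+1)/2$ the exponent is negative, so the sum is controlled by its $R=1$ term and contributes $O(1)$, yielding $I\lesssim_\epsilon\delta^{d-\alpha-\epsilon}\|g\|_{L^2}^2=\delta^{\beta-1-\epsilon}\|g\|_{L^2}^2$. For $\alpha<(d+1)/2$ the exponent is positive, so the sum is dominated by $R\sim 1/\delta$ and contributes $\delta^{\alpha-(d+1)/2+\epsilon}$, producing $I\lesssim_\epsilon\delta^{(d-1)/2-2\epsilon}\|g\|_{L^2}^2=\delta^{\beta-1-2\epsilon}\|g\|_{L^2}^2$. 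The critical case $\alpha=(d+1)/2$ gives a logarithmic factor $\log(1/\delta)$, absorbed into the $\epsilon$-loss.

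The main obstacle I anticipate is managing the accumulation of the $\epsilon$-losses from the $\{\delta_i\}$-discrete $\alpha$-regular hypothesis across both the dyadic decomposition of the $y$-integral and the subsequent dyadic summation in $R$, in particular verifying that one can absorb the borderline logarithmic loss at $\alpha=(d+1)/2$; the flexibility of choosing $N$ arbitrarily large in the Schwartz decay of $K_1$ is what makes the tail sum over outer annuli converge uniformly. Once the shell estimate is in hand, the remainder is a routine interpolation.
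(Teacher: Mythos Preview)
Your approach is essentially the paper's: a dyadic frequency decomposition, a shell estimate $S_R\lesssim_\epsilon (R\delta)^{d-\alpha-\epsilon}\|g\|_{L^2}^2$, and geometric summation with the threshold at $\alpha=(d+1)/2$. The only methodological difference is in how the shell estimate is obtained. The paper proves $\|f\chi_{E_\delta}\ast\chi_{B(0,r)}\|_{L^2}\lesssim r^{(d+\alpha)/2}\delta^{(d-\alpha)/2}\|f\|_{L^2(E_\delta)}$ by Riesz--Thorin interpolation between the $L^\infty$ bound (which uses $|E_\delta\cap B(x,r)|\lesssim (r/\delta)^\alpha\delta^d$) and the trivial $L^1$ bound, then transfers to $\rho_r$ and uses $|\widehat{\rho_\delta}|\le|\widehat{\rho_r}|$ on $|\xi|\lesssim 1/r$. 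Your direct Schur-type bound on $\sup_x\int|K_R(x-y)|\chi_{E_\delta}(y)\,dy$ via the Schwartz decay of $\check\phi_1$ is an equivalent and arguably cleaner route to the same inequality.

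There is, however, one genuine slip. Your treatment of the low-frequency piece $|\xi|\le 1$ is not correct as written: ``dominated by $\|g\|_{L^2}^2$ via Plancherel, which is acceptable since $\beta(d,\alpha)\ge 1$'' has the inequality backwards. Since $\beta-1\ge 0$, one has $\delta^{\beta-1-\epsilon}\le 1$, so the bound $\|g\|_{L^2}^2$ is \emph{larger} than the target $\delta^{\beta-1-\epsilon}\|g\|_{L^2}^2$, not smaller. The easy fix (which is exactly what the paper does) is to use $|\hat g(\xi)|\le \|g\|_{L^1}\le |E_\delta|^{1/2}\|g\|_{L^2}\lesssim_\epsilon \delta^{(d-\alpha-\epsilon)/2}\|g\|_{L^2}$ on the unit ball, giving a low-frequency contribution $\lesssim_\epsilon \delta^{d-\alpha-\epsilon}\|g\|_{L^2}^2$; this suffices since $d-\alpha\ge \beta(d,\alpha)-1$ in both regimes. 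Equivalently, your own shell estimate already applies at $R=1$ and yields the same bound. With this correction the argument is complete; the $\epsilon$-management you flag as an obstacle is a non-issue, as only finitely many $\epsilon$-losses are incurred.
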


This lemma follows from a slightly simpler statement: 
\begin{lemma}\label{lemma: unit dist alpha} For $f\in L^2(E_{\delta})$, 
\begin{equation}\label{eqn: unit dist alpha}
\left(\int_{B(0,\frac{2C'}{\delta})} \left| \left[(f\chi_{E_\delta})\ast\rho_\delta\right]^\wedge(\xi) \right|^2\,
\frac{d\xi}{     ((1+|\xi|) ) ^{d-\alpha}}  \right)^{1/2}
\lesssim_\epsilon \delta^{\frac{d-\alpha}{2}-\epsilon}\left(\int_{E_\delta} |f|^2\right)^{1/2}.
\end{equation}
\end{lemma}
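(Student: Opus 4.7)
The plan is to use Plancherel to transfer the Fourier-side estimate to a real-variable one, and then to apply Schur's test, with the $\{\delta_i\}$-discrete $\alpha$-regularity controlling the single-variable kernel integral. Using $(1+|\xi|)^{-(d-\alpha)}\lesssim(1+|\xi|^{2})^{-(d-\alpha)/2}$, I would replace the weight on the Fourier side by the Fourier transform of a positive kernel $K$ (essentially the Bessel kernel $G_{d-\alpha}$) satisfying $K(x)\lesssim|x|^{-\alpha}$ for $|x|\lesssim 1$ with rapid (exponential) decay otherwise. Squaring the desired inequality and applying Plancherel then yields
\begin{equation*}
\int_{\mathbb{R}^d}|\widehat h(\xi)|^{2}(1+|\xi|)^{-(d-\alpha)}\,d\xi \;\lesssim\; \iint h(x)\overline{h(y)} K(x-y)\,dx\,dy,
\end{equation*}
with $h:=(f\chi_{E_\delta})\ast\rho_\delta$; the truncation to $B(0,2C'/\delta)$ is automatic because $\widehat{\rho_\delta}(\xi)=\widehat\rho(\delta\xi)$ is supported in $B(0,2C'/\delta)$.

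Expanding $h$ and using Fubini, the right-hand side becomes $\iint_{E_\delta\times E_\delta} f(z)\overline{f(w)}\,\widetilde K(z-w)\,dz\,dw$ with $\widetilde K:=\rho_\delta\ast\rho_\delta\ast K$, and the central estimate to prove is the uniform bound
\begin{equation*}
\widetilde K(u)\;\lesssim\;(\delta+|u|)^{-\alpha},\qquad |u|\lesssim 1.
\end{equation*}
This follows from the fact that $\rho\ast\rho$ is Schwartz (since $\widehat\rho^{\,2}$ is $C^{\infty}$ and compactly supported), so $\rho_\delta\ast\rho_\delta$ is an $L^{1}$-normalized approximate identity at scale $\delta$ in spite of $\rho$ itself being non-integrable. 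For $|u|\lesssim\delta$, splitting the convolution into $|v|\le 2\delta$ (using $\|\rho_\delta\ast\rho_\delta\|_{L^\infty}\lesssim\delta^{-d}$ and $\int_{|v|\lesssim\delta}|v|^{-\alpha}\,dv\sim\delta^{d-\alpha}$) and $|v|>2\delta$ (using the Schwartz tail of $\rho_\delta\ast\rho_\delta$) each gives $\lesssim\delta^{-\alpha}$; for $|u|\gtrsim\delta$ the approximate-identity behavior reproduces $K(u)\sim|u|^{-\alpha}$ up to a constant.

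Schur's test in its symmetric form then reduces the lemma to the single-variable estimate
\begin{equation*}
\sup_{z\in\mathbb{R}^d}\int_{E_\delta}\frac{dw}{(\delta+|z-w|)^{\alpha}} \;\lesssim_{\epsilon}\;\delta^{d-\alpha-\epsilon},
\end{equation*}
which I would prove by a dyadic decomposition $\{w:|z-w|\sim 2^{j}\delta\}$ for $0\le j\lesssim\log(1/\delta)$: each annulus contributes $\lesssim(2^{j}\delta)^{-\alpha}\cdot 2^{j(\alpha+\epsilon)}\delta^{d-\epsilon}=2^{j\epsilon}\delta^{d-\alpha-\epsilon}$ by \eqref{eqn: delta discrete}, and summing over $j$ costs at most an extra factor of $\delta^{-\epsilon}$, which is absorbed into $\epsilon$. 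The main obstacle I anticipate is precisely the kernel bound $\widetilde K(u)\lesssim(\delta+|u|)^{-\alpha}$ across the transition scale $|u|\sim\delta$: because $\rho\notin L^{1}$, one cannot treat $\rho_\delta$ alone as an approximate identity, and the required estimate relies on carefully pairing the Schwartz-class function $\rho_\delta\ast\rho_\delta$ against the Bessel kernel $K$.
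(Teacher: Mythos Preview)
Your approach is correct but genuinely different from the paper's. The paper works entirely on the Fourier side: it first proves by $L^1$--$L^\infty$ interpolation that $\|f\chi_{E_\delta}\ast\chi_{B(0,r)}\|_{L^2}\lesssim r^{(d+\alpha)/2}\delta^{(d-\alpha)/2}\|f\|_{L^2(E_\delta)}$, deduces from this the single-scale bound $\|f\chi_{E_\delta}\ast\rho_r\|_{L^2}\lesssim(\delta/r)^{(d-\alpha)/2}\|f\|_{L^2(E_\delta)}$, then uses the pointwise domination $|\widehat{\rho_\delta}(\xi)|\leq|\widehat{\rho_r}(\xi)|$ on $\{|\xi|\lesssim 1/r\}$ to control $(f\chi_{E_\delta})\ast\rho_\delta$ on each dyadic shell $\{|\xi|\sim 2^j\}$ by $(f\chi_{E_\delta})\ast\rho_{2^{-j}}$, and finally sums over $j$. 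Your route moves to physical space via the Bessel kernel, bounds the smoothed kernel $\widetilde K=\rho_\delta\ast\rho_\delta\ast G_{d-\alpha}$ pointwise by $(\delta+|u|)^{-\alpha}$, and finishes with Schur's test and a dyadic decomposition in $|z-w|$ using the $\{\delta_i\}$-regularity directly. Your argument is arguably more elementary (no interpolation, no scale-swapping trick), while the paper's approach produces the intermediate estimate~\eqref{eqn: use for triangle}, which is later reused in the triangle analysis of Section~\ref{sec: triangle}.

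One small correction: you write that $\rho\notin L^1$, but the paper explicitly takes $\rho$ to be a Schwartz function (the upper bound $\rho(x)\leq\sum_j 2^{-jd}\chi_{B(0,2^j)}$ is just a convenient pointwise envelope), so $\rho_\delta$ is already an $L^1$-normalized approximate identity up to a constant. This misreading does not affect your argument --- in fact it simplifies the kernel bound $\widetilde K(u)\lesssim(\delta+|u|)^{-\alpha}$ --- so the ``main obstacle'' you anticipate is not really present.
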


To see that Lemma \ref{lemma: unit dist 2} follows from Lemma \ref{lemma: unit dist alpha}, we notice that, when $\alpha\geq\frac{d+1}{2}$, $d-\alpha\le \frac{d-1}{2}$, and the left-hand side of \eqref{eqn: unit dist 2} is dominated (up to a constant) by the left-hand side of \eqref{eqn: unit dist alpha}. 
 In the case that $\alpha <\frac{d+1}{2}$, for $|\xi| \le \frac{2C'}{\delta}$, we observe that
$$\frac{1}{|\xi|^{(d-1)/2}} \lesssim \frac{\delta^{\alpha - \frac{d+1}{2}}}{|\xi|^{d-\alpha}  }.$$
  Now,  the left-hand-side of \eqref{eqn: unit dist 2} is bounded above by $\delta^{\frac{1}{2}(\alpha - \frac{d+1}{2})}$ times the expression on the left-hand-side of \eqref{eqn: unit dist alpha}, and the Lemma follows.

\subsubsection{ Proof of Lemma \ref{lemma: unit dist alpha} }
We will focus on the demonstration of Lemma \ref{lemma: unit dist alpha} in the rest of the subsection.  For $r\geq \delta$, we first use interpolation to show that 
\begin{equation}\label{ball L2}
\|  f\chi_{E_{\delta}}  * \chi_{B(0,r)}  \|_{L^2}  \lesssim_\epsilon r^{\frac{d+\alpha+\epsilon}{2}}  \delta^{\frac{d-\alpha}{2}-\epsilon} \|  f \|_{L^2( E_{\delta}  ) }. 
\end{equation}
\vskip.125in

Observe that
\begin{align*}
\|  f\chi_{E_{\delta}}  * \chi_{B(0,r)}  \|_{L^{\infty}}  
=&\sup_{x \in E_{\delta}}   \left|  \int  f(y)  \chi_{E_{\delta}\cap B(x,r)}(y)  dy    \right|   \\
\le&     \|  f  \|_{L^{\infty}(E_{\delta}  )}    \left| E_{\delta}\cap B(x,r)  \right|  \\
\le&  \|  f  \|_{L^{\infty}(E_{\delta}  )}    \,\, r^{\alpha +\epsilon}  \delta^{d-\alpha-2\epsilon},
\end{align*}
where we have used the assumption that $E$ is $\{\delta_i\}$-discrete $\alpha$-regular in the last line.

Also, observe that 
\begin{align*}
\|  f\chi_{E_{\delta}}  * \chi_{B(0,r)}  \|_{L^1  (E_{\delta}  )} 
=&    \int     \left|  \int  f(y)  \chi_{E_{\delta}}(y)   \chi_{B(x,r)}(x-y)  dy    \right|   dx  \\
\le&     \|  f  \|_{L^1  (E_{\delta}  )}    \left| B(0,r)  \right|  \\
\le&  \|  f  \|_{L^1(E_{\delta}  )}    \,\, r^{d }.
\end{align*}

Interpolating these two estimates yields \eqref{ball L2}  (see, Riesz-Thorin interpolation theorem in \cite{St93}). 
\\

Next, by the definition of $\rho$, for $r\geq \delta$, we see that whenever $\alpha<d$,
\begin{equation}\label{eqn: use for triangle}
\begin{split}
\|  f \chi_{E_{\delta}}  * \rho_{r}  \|_{L^2}  
\lesssim& \sum_{j=1}^{\infty} 
2^{-j\frac{(d-\alpha)}{2}  } \, \|  f \|_{L^2( E_{\delta}  ) } \, r^{\frac{\alpha - d+\epsilon}{2}}  \delta^{\frac{d-\alpha}{2}-\epsilon} \\
\lesssim& \,
 \|  f \|_{L^2( E_{\delta}  ) } \, \left(  \frac{\delta}{r}\right)^{\frac{d-\alpha-\epsilon}{2}}\delta^{-\frac{\epsilon}{2}}.
 \end{split}
\end{equation}(The case $\alpha=d$ is implied by the second trivial upper bound given in Remark \ref{rmk: trivial examples}.)

It is a direct consequence of \eqref{rho} that, if $|\xi| \le \frac{C'}{r},$
then $| \widehat{  \rho_{\delta} } (\xi) | \le | \widehat{  \rho_r } (\xi) |$
(Indeed, $ | \widehat{  \rho_{r} } (\xi) |  =| \widehat{  \rho} (r \xi)|  \geq \chi_{B(0,C')}(r \xi) $ = 1, and when $|\xi| \le \frac{2C'}{\delta},$
$| \widehat{  \rho_{\delta} } (\xi) |
=| \widehat{  \rho} (\delta \xi)|
\leq \chi_{B(0,2C')}(\delta \xi)
$).\\

It follows that
\begin{equation}\label{est 2}
\begin{split}
\int_{  \frac{C'}{2r}  \le |\xi|  \le \frac{C'}{r}     } \left|  
 (f \chi_{E_{\delta}}  * \rho_{\delta}  )^{\wedge}
 (\xi)  \right|^2  d\xi
\le &
\int_{  \frac{C'}{2r}  \le |\xi|  \le \frac{C'}{r}     } \left| 
( f \chi_{E_{\delta}}  * \rho_{r}) )^{\wedge}
(\xi)  
 \right|^2  d\xi \\
\lesssim &
 \|  f \|^2_{L^2( E_{\delta}  ) } \, \left(  \frac{\delta}{r}\right)^{d-\alpha-\epsilon}\delta^{-\epsilon}.
 \end{split}
\end{equation}

With the estimates above in tow, we turn to estimating the left-hand-side of \eqref{eqn: unit dist alpha}:
\[ \left(\int_{B(0,\frac{2C'}{\delta})}
 \left| \left[(f \chi_{E_\delta})\ast\rho_\delta\right]^\wedge(\xi) \right|^2\, \frac{d\xi}{    (1+|\xi|)^{d-\alpha} } \right)^{1/2},
\]where the integration domain comes from the assumption that $|\widehat{\rho}(\delta\xi)| \le \chi_{B(0, 2C')}( \delta \xi)$.  
We consider the integral over the set $\{|\xi|  > C'\}   $ and $\{|\xi| \leq C'\}$ separately.  We have
\begin{align*}
& \sum_{\{ 1 \le 2^j \le \frac{1}{\delta}\}  }
 \int_{ \{  2^{j}  <\frac{   |\xi|}{C'} \le 2^{j+1} \}}  
 \left| \left[(f\chi_{E_\delta})\ast\rho_\delta\right]^\wedge(\xi) \right|^2\, \frac{d\xi}{    (1+|\xi|)^{d-\alpha} } \\
 \lesssim   &  \sum_{\{ 1 \le 2^j \le \frac{1}{\delta}\}  }  \|  f \|^2_{L^2( E_{\delta}  ) } \,  \left(  \delta 2^j \right)^{d-\alpha-\epsilon}\delta^{-\epsilon}  
2^{-j(d-\alpha)}  \\
\lesssim  & \,\,  \|  f \|^2_{L^2( E_{\delta}  ) }   \delta^{d-\alpha-\epsilon}    \log(1/\delta),
\end{align*}
 where we used \eqref{est 2} with $r=2^{-(j+1)}.$

While, 
\begin{align*}
 \int_{\{  |\xi|  \le C'\}  }   
 \left| \left[(f\chi_{E_\delta})\ast\rho_\delta\right]^\wedge(\xi) \right|^2\, d\xi 
 \lesssim &  \|\left((f\chi_{E_\delta})\ast\rho_\delta\right)^\wedge  \|_{L^{\infty}}^2
  \lesssim   \|\left(f\chi_{E_\delta}\right)^\wedge  \|_{L^{\infty}}^2\\
    \lesssim &  \|f\chi_{E_\delta} \, \|_{L^{1}}^2
\lesssim \| f\|^2_{L^2( E_{\delta} )}\cdot |E_{\delta}|  
\lesssim \| f\|^2_{L^2( E_{\delta} )}\delta^{d-\alpha-\epsilon},
\end{align*}
where we used Cauchy-Schwarz in the second to last step and the definition of $\{\delta_i\}$-discrete $\alpha$-regular in the last.  This concludes the proof of Lemma \ref{lemma: unit dist alpha}.
\\

\subsubsection{Lower bound}\label{section lower bound}
Let $k\geq 2$ and $d\geq 2$. 
Let $\vec{t}=(t_1,\cdots, t_k)\in\mathbb{R}^k_+$ denote arbitrary prescribed gaps.
We demonstrate the existence of a $\{\delta_i\}$-discrete $\alpha$-regular set $E\subset \mathbb{R}^d$ of Hausdorff dimension $\alpha$, so that the Hausdorff dimension of $VS^k_{\vec{t}}(E)$ is at least $(k+1)\alpha-k$. 
Note that, when $\alpha\geq \frac{d+1}{2}$, the upper bound and lower bound for $g_d(VS^k_{\vec{t}},\alpha)$ match. 
For this reason, we only state the result for this range, although the following example works for any value $\alpha\in (1,d)$.  

For the sake of simplicity, we only consider the case $k=2$ and assume $t_1\leq t_2$. 
The following example is adapted from \cite{OO}, where the $k=1$ case was studied. 
Let $C\subset B(0,\frac{t_1}{2})\subset \mathbb{R}^{d-1}$ be an AD regular set with Hausdorff dimension $\gamma\geq 0$. Define $E=C\times [0,4t_2]\subset \mathbb{R}^d$ and $\alpha:=\gamma+1$. Then it is easy to see that $\dim_{\mathcal{H}}(E)=\alpha$ and $E$ is $\{\delta_i\}$-discrete $\alpha$-regular. One also has
\[
\begin{split}
VS^2_{(t_1,t_2)}(E)=&\Big\{(c_1,s_1; c_2,s_2;c_3,s_3):\, c_i\in C,\, s_i\in [0,4t_2],\,\\
&\quad |s_1-s_2|=\sqrt{t_1^2-|c_1-c_2|^2},\, |s_2-s_3|=\sqrt{t_2^2-|c_2-c_3|^2}\Big\}.
\end{split}
\]With any $c_1,c_2,c_3,s_1$ fixed such that $c_1, c_2, c_3$ are distinct, at least a $s_2\in [0,4t_2]$ will be determined, which will in turn determine at least a $s_3\in [0,4t_2]$. Therefore,
\[
\dim_{\mathcal{H}}(VS^2_{(t_1,t_2)}(E))\geq \dim_{\mathcal{H}}(C\times C\times C)+1\geq 3\gamma+1=3\alpha-2.
\]
In general, it is an easy deduction to extend this example to longer chains to show that
\[
\dim_{\mathcal{H}}(VS^k_{\vec{t}}(E))\geq \dim_{\mathcal{H}}(C\times \cdots \times C)+1\geq (k+1)\alpha-k.
\]
The proof of Theorem \ref{main3} is complete.

Note that both the above upper and lower estimates extend to general trees. Indeed, following the iterative scheme introduced in \cite{IT} for trees, one can apply the $L^2$ estimate in Lemma \ref{lemma: unit distance upp L2} to obtain the upper bound. A similar construction as in the example above will produce the matching lower bound, where one still fixes distinct $c_1,\cdots, c_{k+1}$ first together with the $s_i$ corresponding to the root of the tree. We omit the details.

\subsection{Proof of Theorem \ref{main4}}

\subsubsection{Upper bound: }\label{sec: trivial upper bound}

Let $d\geq 4$, $\alpha\leq \lfloor \frac{d}{2} \rfloor-1$, and fix $\vec{t}\in \mathbb{R}^k_+$ with $t_i\sim 1$, $\forall i$. It is easy to see from the assumption that $g_d(VS_{\vec{t}}^k,\alpha)\leq (k+1)\alpha$. 

Indeed, for any set $E\subset \mathbb{R}^d$ that is $\{\delta_i\}$-discrete $\alpha$-regular and is contained in the unit ball, one has $|E_{\delta_i}|\lesssim_\epsilon \delta_i^{d-\alpha-\epsilon}$. Hence, the $\delta_i$-neighborhood of $VS^k_{\vec{t}}(E)$, being contained in $E_{\delta_i}\times\cdots \times E_{\delta_i}$, has Lebesgue measure bounded by $\delta_i^{(k+1)(d-\alpha-\epsilon)}$. This in particular shows that 
\[
\dim_{\mathcal{H}}(VS^k_{\vec{t}}(E))\leq \underline{\rm dim}_{\mathcal{M}}(VS^k_{\vec{t}}(E))\leq (k+1)\alpha.
\]

\subsubsection{Lower bound:}\label{orthogonal cirles}
Let $d\geq 4$ and $\alpha\leq \lfloor \frac{d}{2} \rfloor-1$.
To see the lower bound, we consider the following example, which is inspired by the well-known orthogonal-circles example for the unit distance problem in $d\geq 4$ (for instance, see \cite{Lenz}). Assume $d$ is even. Note that one can always reduce to this case by recalling $g_{d+1}\geq g_d$. 

We first iteratively choose sets $K_1,\cdots, K_k\subset \mathbb{R}^{\frac{d}{2}}$. More precisely, let $s_1=t_1$ and $K_1$ be an AD regular subset of $s_1 S^{\frac{d}{2}-1}\subset \mathbb{R}^{\frac{d}{2}}$ so that $\dim_{\mathcal{H}}(K_1)=\alpha$, where $s_1 S^{\frac{d}{2}-1}$ denotes the sphere in $\mathbb{R}^{\frac{d}{2}}$ of radius $s_1$ centered at the origin. Let $s_2>0$ be such that $\frac{s_1^2}{2}+\frac{s_2^2}{2}=t_2^2$ (which is possible since $t_2\geq t_1$). Choose $K_2$ to be an AD regular subset of $s_2 S^{\frac{d}{2}-1}\subset \mathbb{R}^{\frac{d}{2}}$. In general, for $2\leq i\leq k$, $K_i$ is an AD regular subset of $s_i S^{\frac{d}{2}-1}\subset \mathbb{R}^{\frac{d}{2}}$ so that $\dim_{\mathcal{H}}(K_i)=\alpha$, where $s_i>0$ satisfies 
\[
\frac{s_{i-1}^2}{2}+\frac{s_i^2}{2}=t_i^2.
\]It is easy to see that one indeed has $s_i>0$, because from the previous step one has $\frac{s_{i-1}^2}{2}< t_{i-1}^2\leq t_i^2$.

Now, define a set
\begin{equation}\label{set E}
\begin{split}
E=&\left\{\Big(\frac{y_i}{\sqrt{2}},0\Big)\in \mathbb{R}^{\frac{d}{2}}\times \mathbb{R}^{\frac{d}{2}}:\, y_i\in K_i,\,\text{for some } i\right\}\\
&\qquad\qquad\bigcup \left\{\Big(0, \frac{y_i}{\sqrt{2}}\Big)\in \mathbb{R}^{\frac{d}{2}}\times \mathbb{R}^{\frac{d}{2}}:\, y_i\in K_i,\,\text{for some } i\right\}.
\end{split}
\end{equation}It is straightforward to check that $E\subset \mathbb{R}^d$ is $\{\delta_i\}$-discrete $\alpha$-regular and has Hausdorff dimension $\alpha$. We claim that  $\dim_{\mathcal{H}}(VS^k_{\vec{t}}(E))\geq (k+1)\alpha$.

To see this, take the case that $k$ is odd as an example. One observes that $VS^k_{\vec{t}}(E)$ contains the following set as a subset:
\[
\left\{\left(\Big( \frac{\tilde{y}_1}{\sqrt{2}}, 0\Big), \Big( 0, \frac{y_1}{\sqrt{2}}\Big), \Big( \frac{y_2}{\sqrt{2}}, 0\Big), \Big( 0, \frac{y_3}{\sqrt{2}}\Big),\ldots, \Big( 0, \frac{y_k}{\sqrt{2}}\Big) \right):\, \tilde{y}_1\in K_1,\, y_i\in K_i,\, \forall i  \right\}.
\](In the case that $k$ is even, one has a similar result with the last component in the set above replaced by $\Big( \frac{y_k}{\sqrt{2}}, 0\Big)$.) Therefore, it implies that $\dim_{\mathcal{H}}(VS^k_{\vec{t}}(E))\geq (k+1)\alpha$, and the proof of the $d\geq 4$ case of Theorem \ref{main4} is complete.

Note that one can easily make the chains obtained above non-degenerate without lowering the dimension of the chain set. Moreover, the above argument extends to general trees with lengths of edges satisfying certain conditions. However, the method fails in certain cases, for instance, the case of $3$-chain of gaps $(1, 2, 1)$. In the discrete setting, a similar issue also exists. In fact, in the discrete setting, when $d\geq 4$, the orthogonal-circles example is the only type of sharp example for the unit distance problem that the authors are aware of, which seems to suggest that the lower bound for certain chains may be very difficult to obtain. 

The above argument does extend to more general chains that do not necessarily satisfy $t_1\leq\cdots\leq t_k$. For example, as mentioned in Remark \ref{rmk: special case} in the Introduction, when $k=3$, the same lower bound $(k+1)\alpha=4\alpha$ holds whenever $t_2^2<t_1^2+t_3^2$. The proof proceeds very similarly as the above, and we give a short sketch here for the sake of completeness. 

\begin{proof}[Proof of Remark \ref{rmk: special case}]
Fix such a vector $\vec{t}=(t_1, t_2, t_3)$. Without loss of generality, assume $t_1\leq t_3$. Again, assume $d$ is even. For $i=1,2,3$, let $K_i$ be an $\alpha$-dimensional AD regular subset of $s_i S^{\frac{d}{2}-1}\subset \mathbb{R}^{\frac{d}{2}}$, with $s_1, s_2, s_3>0$ to be determined later.

If $t_2^2\leq \min(t_1^2, t_3^2)=t_1^2$, choose $s_1=\sqrt{2t_1^2-t_2^2}$, $s_2=t_2$, and $s_3=\sqrt{2t_3^2-t_2^2}$. Then define the set $E$ as in (\ref{set E}), one can observe that $VS^3_{\vec{t}}(E)$ contains the following subset
\[
\left\{\left(\Big( \frac{y_1}{\sqrt{2}}, 0\Big), \Big( 0, \frac{y_2}{\sqrt{2}}\Big), \Big( \frac{\tilde{y}_2}{\sqrt{2}}, 0\Big), \Big( 0, \frac{y_3}{\sqrt{2}}\Big)\right):\, \tilde{y}_2\in K_2,\, y_i\in K_i,\, \forall i  \right\}.
\]hence $\dim_{\mathcal{H}}(VS^3_{\vec{t}}(E))\geq 4\alpha$.

If $t_2^2> \min(t_1^2, t_3^2)=t_1^2$, then since $t_2^2<t_1^2+t_3^2$, there exists $A>1$ (depending on $\vec{t}$) such that $t_2^2<\frac{t_1^2}{A}+t_3^2$. Choose $s_1=t_1$, $s_2=\sqrt{t_2^2-t_1^2/A}$, and $s_3=\sqrt{t_1^2/A+t_3^2-t_2^2}$. Let $\frac{1}{A}+\frac{1}{A'}=1$ and consider the set
\[
\begin{split}
E=&\left\{\Big(\frac{\tilde{y}_1}{\sqrt{A'}},0 \Big)\in \mathbb{R}^{\frac{d}{2}}\times \mathbb{R}^{\frac{d}{2}}:\, \tilde{y}_1\in K_1 \right\}\bigcup \left\{\Big(0, \frac{y_1}{\sqrt{A}} \Big)\in \mathbb{R}^{\frac{d}{2}}\times \mathbb{R}^{\frac{d}{2}}:\, y_1\in K_1 \right\}\\
& \quad\bigcup \left\{(y_2,0)\in \mathbb{R}^{\frac{d}{2}}\times \mathbb{R}^{\frac{d}{2}}:\, y_2\in K_2 \right\} \bigcup  \left\{(0, y_3)\in \mathbb{R}^{\frac{d}{2}}\times \mathbb{R}^{\frac{d}{2}}:\, y_3\in K_3 \right\}.
\end{split}
\]Then $VS^3_{\vec{t}}(E)$ contains the subset
\[
\left\{\left(\Big( \frac{\tilde{y}_1}{\sqrt{A'}}, 0\Big), \Big( 0, \frac{y_1}{\sqrt{A}}\Big), (y_2,0), (0,y_3)\right):\, \tilde{y}_1\in K_1,\, y_i\in K_i,\, \forall i  \right\}.
\]
and the desired estimate follows.
\end{proof}

\subsubsection{Upper bound: The case $d=2$, $0<\alpha\leq 1$, $k=2$}
When $d=2$ and $0<\alpha\leq 1$, we can completely determine the value of $g_2(VS^2_{\vec{t}},\alpha)$. 
Fix a set $E\subset \mathbb{R}^2$ that is $\{\delta_i\}$-discrete $\alpha$-regular and fix gaps $\vec{t}=(t_1,t_2)\in \mathbb{R}^2_+$. 
We consider the situation where $t_1 = t_2 =1$; the general case can be handled with only small modifications.  

Recall that $VS^2_{(1,1)}(E)=\{(x,y,z)\in E^3:\, |x-y|=1, |y-z|=1, \text{ and } x\neq z\}$. 
Write 
$$VS^2_{(1,1)}(E) = \bigcup_{n=0}^\infty D_n,$$ where
$$D_n = \left\{ (x,y,z) \in VS^2_{(1,1)}(E): \frac{1}{n} \le |x-z| \le 2- \frac{1}{n} \right\},\quad \forall n\geq 1$$ and
$$D_0 = \{ (x,y,z) \in VS^2_{(1,1)}(E):  |x-z| = 2\}.$$
Then, the desired upper bound ${\rm dim}_{\mathcal{H}}(VS^2_{(1,1)}(E))\leq 2\alpha$ will follow from ${\rm dim}_{\mathcal{H}}(D_n)\leq 2\alpha$, $\forall n\in\mathbb{N}$.

We first observe that $\dim_{\mathcal{H}}{(D_0)} \le 2 \alpha.$  
If $(x,y,z)\in D_0,$ then $y = \frac{x+z}{2}$.  Since the map $(x,z) \mapsto (x,\frac{x+z}{2},z)$ is Lipschitz, and $D_0$ is the image of $E\times E$ under this map, the claim follows.  
Here, we have used the assumption that $E$ is $\{\delta_i\}$-discrete $\alpha$-regular to conclude that $\dim_{\mathcal{H}}(E\times E) \le 2\alpha$ (see Section \ref{sec: trivial upper bound} for a deduction).  

We use a similar line of reasoning to show that $\dim_{\mathcal{H}}{(D_n)} \le 2 \alpha$ for each $n\in \mathbb{N}$. 
The only change is that there can be two choices of $y$ so that $(x,y,z) \in D_n$, and expressing each of these choices as a function of $x$ and $z$ is slightly more involved.  
Fix $n$.  
For each $(x,y,z) \in D_n$ and for each  $i\in \{1,2\}$,
define
$$y_i = F_i(x,z)=  x + 
\begin{pmatrix}
\cos{\theta_i} & -\sin{\theta_i}\\
\sin{\theta_i} & \cos{\theta_i}
\end{pmatrix}
\frac{z-x}{|x-z|},$$
 where $\theta$ is the angle between $y-x$ and $z-x$, (so, $\cos{\theta} =\frac{1}{2}|x-z|  $), $\theta_1 = \theta$, and $\theta_2 = -\theta.$

Decompose $D_n$ into $D_n^1 \bigcup D_n^2$, where for $i\in \{1,2\}$,
$$D_n^i = \left\{(x,y_i, z) \in VS^2_{(1,1)}(E): \frac{1}{n} \le |x-z| \le 2- \frac{1}{n} \right \}.$$

By symmetry, 
it suffices to show that $\dim_{\mathcal{H}}{(D_n^1)} \le 2 \alpha.$
As above, it suffices to show that the maps $(x,z) \mapsto (x,F_1(x,z),z)$ is Lipschitz.
A computation shows that there exists a constant $C(n)$ so that
$|\frac{  \partial{F_1}}{\partial{x_1}  }|,    \cdots , |\frac{  \partial{F_1}}{\partial{z_2}  }| \le C(n)$, and the result follows.


%



We note that the argument above extends to longer chains, but it is an open problem to find matching upper and lower bounds when $k\geq 3$.


\subsubsection{Lower bound: The case $d=2$, $0<\alpha\leq 1$, $k=2$}
Next, we prove that $g_2(VS^2_{\vec{t}},\alpha)\geq 2\alpha$ by constructing a sharp example, which is inspired by a construction studied in \cite{PSS}. Given gaps $\vec{t}=(t_1,t_2)\in \mathbb{R}^2_+$, for $i=1,2$, let $E_i$ be an AD regular subset of $t_iS^{1}$ so that $\dim_{\mathcal{H}}(E_i)=\alpha$, where $t_i S^1$ denotes the circle of radius $t_i$ centered at the origin. In the case that $t_1=t_2$, choose $E_1,E_2$ that are disjoint. Take another AD regular set $E_3$ that contains the origin with $\dim_{\mathcal{H}}(E_3)=\alpha$, and define $E=E_1\cup E_2\cup E_3$. It is easy to see that $\dim_{\mathcal{H}}(E)=\alpha$, $E$ is $\{\delta_i\}$-discrete $\alpha$-regular, and
\[
VS^2_{\vec{t}}(E)\supset \{(x,0,y):\, x\in E_1, y\in E_2\}.
\]Hence,
\[
\dim_{\mathcal{H}}(VS^2_{\vec{t}}(E))\geq \dim_{\mathcal{H}}(E_1)+\dim_{\mathcal{H}}(E_2)=2\alpha. 
\]The proof of this last case of Theorem \ref{main4} is complete.
\vskip.5in


\section{Triangles with prescribed gaps: Proof of Theorem \ref{main5}}\label{sec: triangle}

\subsection{The large $\alpha$ case:}
Let $d\geq 3$ and $t_i\sim 1$. In this section, we prove the bound
\[
g_d(V{\rm Tri}_{\vec{t}}, \alpha)\leq \begin{cases} 3\alpha-3,& \frac{2d}{3}+1\leq\alpha\leq d,\\ d+\frac{3\alpha}{2}-\frac{3}{2},& 0<\alpha\leq\frac{2d}{3}+1. \end{cases}
\]The proof strategy is adapted from that of Theorem \ref{main3}, but in the triangle case, one cannot expect to do iteration. Instead, we will directly prove an $L^2$ bound that is adapted to the triangle case, which involves an estimate of the Fourier transform of the surface measure of a surface that is not the sphere anymore. 

To begin with, let $E$ be a $\{\delta_i\}$-discrete $\alpha$-regular set with $\dim_{\mathcal{H}}(E)=\alpha$. Without loss of generality, assume that $E=-E$. We aim to prove that for all $i$ and $\epsilon>0$,
\begin{equation}\label{eqn: triangle 1}
\begin{split}
|D^{\delta_i}|:=&|\{(x,y,z)\in E_{\delta_i}^3:\, t_1-2\delta_i\leq |x-y|\leq t_1+2\delta_i,\,\\
& \qquad\qquad t_2-2\delta_i\leq |y-z|\leq t_2+2\delta_i,\, t_3-2\delta_i\leq |x-z|\leq t_3+2\delta_i\}|\\
\lesssim_\epsilon &\delta_i^{3d-\gamma(d,\alpha)-\epsilon},
\end{split}
\end{equation}where
\[
\gamma(d,\alpha):=\begin{cases} 3\alpha-3,& \frac{2d}{3}+1\leq \alpha\leq d,\\ d+\frac{3\alpha}{2}-\frac{3}{2},& \alpha\leq \frac{2d}{3}+1. \end{cases}
\]Again, we denote $\delta=\delta_i$ for the sake of simplicity.

It is direct to see that
\[
\begin{split}
|D^\delta|=& \int_{E_\delta} \int_{E_\delta}\int_{E_\delta} \chi_{A_{t_1,\delta}}(y-x) \chi_{A_{t_2,\delta}}(z-y)\chi_{A_{t_3,\delta}}(x-z)\,dxdydz\\
\sim&\int \int \int \chi_{E_\delta}(z)\chi_{E_\delta}(z+x)\chi_{E_\delta}(z+y)\chi_{S_{\vec{t},\delta}}(x,y)\,dxdydz,
\end{split}
\]where $S_{\vec{t},\delta}$ denotes the $\delta$-neighborhood of the surface
\[
S_{\vec{t}}:=\{(x,y)\in \mathbb{R}^d\times \mathbb{R}^d:\, |x|=t_1,\, |y|=t_2,\, |x-y|=t_3\}.
\]

Let $\rho$ be the symmetric Schwartz function on $\mathbb{R}^d$ introduced above in \eqref{rho}, and set $\rho_r(x)=r^{-d}\rho(\frac{x}{r})$. Then, one has
\[
\chi_{S_{\vec{t},\delta}}(x,y)\lesssim \delta^3 (\rho_\delta\otimes\rho_\delta)\ast \sigma_{S_{\vec{t}}}(x,y)=\delta^3\int\int \rho_\delta(x-u)\rho_\delta(y-v)\,d\sigma_{S_{\vec{t}}}(u,v),
\]where $\sigma_{S_{\vec{t}}}(x,y)$ denotes the normalized surface measure of $S_{\vec{t}}$. Therefore,
\[
|D^\delta|\lesssim \delta^3\int\int \left(\int \chi_{E_\delta}(z)\chi_{E_\delta}(z+x)\chi_{E_\delta}(z+y)\,dz \right) (\rho_\delta\otimes \rho_{\delta})\ast \sigma_{S_{\vec{t}}}(x,y)\,dxdy.
\]

With $x,y$ fixed, the inner integral is
\[
\begin{split}
&\int \chi_{E_\delta}(z)\chi_{E_\delta}(z+x)\chi_{E_\delta}(z+y)\,dz\\
=& \int\int\int\int \hat\chi_{E_\delta}(\xi)\hat\chi_{E_\delta}(\eta)\hat\chi_{E_\delta}(\zeta)e^{i[(z,z,z)+(x,y,0)]\cdot (\xi, \eta,\zeta)}\,d\xi d\eta d\zeta dz.
\end{split}
\]Hence, 
\[
\begin{split}
|D^\delta|\lesssim &\delta^3 \int\int\int\int \hat\chi_{E_\delta}(\xi)\hat\chi_{E_\delta}(\eta)\hat\chi_{E_\delta}(\zeta) e^{i(z,z,z)\cdot(\xi,\eta,\zeta)}\cdot\\
&\qquad\qquad\qquad\qquad\hat\sigma_{S_{\vec{t}}}(-\xi,-\eta)\hat{\rho_\delta}(-\xi)\hat{\rho_\delta}(-\eta)\, d\xi d\eta d\zeta dz\\
=& \delta^3\int\int \hat\chi_{E_\delta}(\xi)\hat\chi_{E_\delta}(\eta) \hat\chi_{E_\delta}(-\xi-\eta)\hat\sigma_{S_{\vec{t}}}(-\xi,-\eta) \hat{\rho_\delta}(-\xi)\hat{\rho_\delta}(-\eta)\, d\xi d\eta.
\end{split}
\]In the last step above, we have used the observation that
\[
\int e^{i(z,z,z)\cdot(\xi,\eta,\zeta)}\,dz= \delta_0(\xi+\eta+\zeta),
\]where $\delta_0$ denotes the Dirac $\delta$ function at the origin.

Similarly as in the proof of Theorem \ref{main3}, we will estimate the above integral by decomposing it into single scales. At each single scale, one assumes $|\xi|+|\eta|\sim \frac{C}{2^j\delta}$, where $\frac{1}{2}\leq 2^j\leq \frac{1}{\delta}$. Observe that in this case, at least two of $|\xi|, |\eta|, |\xi+\eta|$ are $\sim \frac{C}{2^j\delta}$.

To see why it suffices to reduce to the range $\frac{1}{2}\leq 2^j\leq \frac{1}{\delta}$, one observes that $2^j \geq \frac{1}{2}$ follows from $|\xi|+|\eta|\lesssim\frac{C}{\delta}$, which is a consequence of the Fourier decay property of $\rho_\delta$ and $t\sim 1$. On the other hand, $2^j\leq \frac{1}{\delta}$ follows from $|\xi|+|\eta|\geq C$. This is because the integral above over the domain $|\xi|+|\eta|\leq C$ can be trivially bounded by
\[
 |E_\delta|^3 \|\sigma_{S_{\vec{t}}}\|_{L^1}\lesssim \delta^{3(d-\alpha-\epsilon)},
\]which implies that the contribution of this part to $|D^\delta|$ is bounded by $\delta^{3d-(3\alpha-3)-\epsilon}$ as desired. (Note that the case for $\alpha<\frac{2d}{3}+1$ then also trivially follows since in that range one has $3\alpha-3<d+\frac{3\alpha}{2}-\frac{3}{2}$.)

In the following, we discuss two sub-cases for the single scale $|\xi|+|\eta|\sim \frac{C}{2^j\delta}$, with the first one being the main case.

\subsubsection{Case $|\xi|\sim|\eta|\sim \frac{C}{2^j\delta}$}

By symmetry and dropping the $\delta^3$ for now, it suffices to estimate
\begin{equation}\label{eqn: triangle 2}
\int\int_{|\xi|\sim|\eta|\sim \frac{C}{2^j\delta}} \hat\chi_{E_\delta}(\xi)\hat\chi_{E_\delta}(\eta) \hat\chi_{E_\delta}(\xi+\eta)\hat\sigma_{S_{\vec{t}}}(\xi,\eta) \hat{\rho_\delta}(\xi)\hat{\rho_\delta}(\eta)\, d\xi d\eta.
\end{equation}We will be using the following estimate, slightly generalizing Lemma 2.3 of \cite{IL}:

\begin{lemma}
Let $S_{\vec{t}}$ be the surface defined above, and let $\theta$ denote the angle between the $t_1$-side and the $t_2$-side of the triangle. Suppose $|\xi|\sim |\eta|$, then
\[
|\hat\sigma_{S_{\vec{t}}}(\xi,\eta)|\lesssim_{\vec{t}} \left|\xi+\frac{t_2}{t_1}g_{\theta}(\eta)\right|^{-\frac{1}{2}}|\xi|^{-(d-2)}(\sin\langle \xi,\eta\rangle)^{-\frac{d-2}{2}},
\]where $g_{\theta}\in O(d)$ is some rotation by $\theta$ and $\langle \xi,\eta\rangle$ denotes the angle between $\xi,\eta$.
\end{lemma}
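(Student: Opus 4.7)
The plan is to realize $S_{\vec t}$ as a homogeneous space under the diagonal $SO(d)$-action and to reduce the Fourier transform to an iterated oscillatory integral. Fix a reference pair $(P_1,P_2)\in S_{\vec t}$ with $P_2=(t_2/t_1)g_\theta(P_1)$, where $g_\theta$ is a rotation by $\theta$ in the plane spanned by $P_1$ and $P_2$. Transitivity of the action (with stabilizer $SO(d-2)$) identifies $S_{\vec t}$ with the Stiefel manifold $V_2(\mathbb{R}^d)$ of orthonormal $2$-frames, so
\[
\hat\sigma_{S_{\vec t}}(\xi,\eta)=c\int_{V_2(\mathbb{R}^d)} e^{-i(A\cdot e_1'+B\cdot e_2')}\,de_1'\,de_2',
\]
where $A=t_1\xi+t_2\cos\theta\,\eta$ and $B=t_2\sin\theta\,\eta$. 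Here $e_1'$ ranges over $S^{d-1}$ and, for each $e_1'$, $e_2'$ ranges over the unit $(d-2)$-sphere in $(e_1')^\perp$.

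I would first execute the inner integration over the fiber $e_2'$, which is precisely the Fourier transform of the surface measure on a unit $(d-2)$-sphere in $\mathbb{R}^{d-1}$, evaluated at the projection $P_{(e_1')^\perp}B$. The classical stationary-phase estimate for hypersurfaces with nonvanishing Gaussian curvature yields
\[
\Bigl|\int_{S^{d-2}_{(e_1')^\perp}} e^{-iB\cdot e_2'}\,de_2'\Bigr|\lesssim_{\vec t} \bigl(1+|\eta|\sin\langle e_1',\eta\rangle\bigr)^{-(d-2)/2}.
\]

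The outer integral is then an oscillatory integral over $S^{d-1}$ with phase $-A\cdot e_1'$ and amplitude controlled by this Bessel-type factor. I would carry out the stationary phase in two stages: a one-dimensional stationary phase in the radial direction $\hat A$ extracts a factor $|A|^{-1/2}$, which, after using the geometric identification that $g_\theta$ carries $\hat P_1$ to $\hat P_2$, is comparable up to constants depending on $\vec t$ to $|\xi+(t_2/t_1)g_\theta(\eta)|^{-1/2}$. The remaining tangential integration over the $(d-2)$-sphere transverse to $\hat A$, combined with the inner amplitude bound (evaluated near the critical point, where $\langle e_1',\eta\rangle$ is comparable to $\langle\xi,\eta\rangle$ under $|\xi|\sim|\eta|$), produces the remaining factor $|\xi|^{-(d-2)}(\sin\langle\xi,\eta\rangle)^{-(d-2)/2}$.

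The hard part will be matching the precise exponents appearing in the lemma via this decomposition. A naive full $(d-1)$-dimensional sphere stationary phase would give $|A|^{-(d-1)/2}$, whereas the lemma isolates only a single half-power in $|A|\sim|\xi+(t_2/t_1)g_\theta(\eta)|$ together with a stronger $|\xi|^{-(d-2)}$ factor. Achieving the stated form requires a careful radial/tangential split of the outer sphere integral, tracking the interaction of the singular amplitude $(\sin\langle e_1',\eta\rangle)^{-(d-2)/2}$ with the tangential integration, and correctly identifying the natural oscillatory vector $A=t_1\xi+t_2\cos\theta\,\eta$ with the geometric vector $\xi+(t_2/t_1)g_\theta(\eta)$ via the rotational structure of the reference triangle.
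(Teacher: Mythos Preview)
The paper does not actually prove this lemma: it says the proof ``follows exactly the same lines as \cite[Lemma 2.3]{IL}'' (the equilateral case) and omits the details. Your setup---parametrizing $S_{\vec t}$ by the Stiefel manifold $V_2(\mathbb{R}^d)$ via $x=t_1e_1'$, $y=t_2(\cos\theta\,e_1'+\sin\theta\,e_2')$, and writing $\hat\sigma_{S_{\vec t}}(\xi,\eta)$ as an oscillatory integral with phase $A\cdot e_1'+B\cdot e_2'$---is exactly the framework of \cite{IL}, so at the structural level you are on the same track as the reference the paper invokes.

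Where your outline is shaky is the mechanism you propose for extracting the exponents. You suggest using the inner $(d-2)$-sphere integral only as a \emph{size} bound $(|\eta|\sin\langle e_1',\eta\rangle)^{-(d-2)/2}$, then doing a one-dimensional ``radial'' stationary phase in the $\hat A$ direction for the outer integral, with a trivial bound on the remaining tangential $(d-2)$-sphere. That bookkeeping gives at best $|A|^{-1/2}\cdot(|\xi|\sin\langle\xi,\eta\rangle)^{-(d-2)/2}$, which is missing a full factor of $|\xi|^{-(d-2)/2}$ compared with the lemma. The missing decay does not come from the amplitude; it comes from \emph{oscillation} you have discarded. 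The inner integral is a Bessel function, and its large-argument expansion contributes a phase $\pm|P_{(e_1')^\perp}B|$ to the outer integral. Equivalently, one should carry out stationary phase directly on the $(2d-3)$-dimensional Stiefel manifold with the full phase $A\cdot e_1'+B\cdot e_2'$. The critical points then lie in $\mathrm{span}(\xi,\eta)$; the single in-plane rotation direction gives the Hessian eigenvalue producing $|t_1\xi+t_2g_\theta(\eta)|^{-1/2}$, while the $2(d-2)$ out-of-plane directions contribute eigenvalues of size $\sim|\xi|$ and $\sim|\eta|\sin\langle\xi,\eta\rangle$, yielding the remaining $|\xi|^{-(d-2)}(\sin\langle\xi,\eta\rangle)^{-(d-2)/2}$. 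Your identification of the natural vector with $\xi+(t_2/t_1)g_\theta(\eta)$ is correct once you see it as the critical value of the combined phase, not as $|A|$ alone.
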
 

The proof of the lemma follows exactly the same lines as \cite[Lemma 2.3]{IL}, where the equilateral triangle case was discussed. We omit the proof of the general case.

Applying this lemma, one obtains 
\[
\begin{split}
(\ref{eqn: triangle 2})
\lesssim &(2^j\delta)^{d-2} \int\int_{|\xi|\sim|\eta|\sim \frac{C}{2^j\delta}}  |\hat\chi_{E_\delta}(\xi)||\hat\chi_{E_\delta}(\eta)| |\hat\chi_{E_\delta}(\xi+\eta)| |\hat{\rho_\delta}(\xi)\hat{\rho_\delta}(\eta)|\cdot \\
&\qquad\qquad\qquad \left|\xi+\frac{t_2}{t_1}g_{\theta}(\eta)\right|^{-\frac{1}{2}}(\sin\langle \xi,\eta\rangle)^{-\frac{d-2}{2}}\,d\xi d\eta\\
= & (2^j\delta)^{d-2} \int\int_{|\xi|\sim|\eta|\sim \frac{C}{2^j\delta}}  |\widehat{\chi_{E_\delta}\ast\rho_\delta}(\xi)||\widehat{\chi_{E_\delta}\ast\rho_\delta}(\eta)| |\hat\chi_{E_\delta}(\xi+\eta)| \cdot\\
& \qquad\qquad\qquad \left|\xi+\frac{t_2}{t_1}g_{\theta}(\eta)\right|^{-\frac{1}{2}}(\sin\langle \xi,\eta\rangle)^{-\frac{d-2}{2}}\,d\xi d\eta.
\end{split}
\]

We claim that for any fixed $\eta$,
\begin{equation}\label{eqn: triangle 3}
\int_{|\xi|\sim \frac{C}{2^j\delta}} |\hat\chi_{E_\delta}(\xi+\eta)|  \left|\xi+\frac{t_2}{t_1}g_{\theta}(\eta)\right|^{-\frac{1}{2}}(\sin\langle \xi,\eta\rangle)^{-\frac{d-2}{2}}\,d\xi \lesssim_\epsilon 2^{j\frac{\alpha-2d+1+\epsilon}{2}}\delta^{\frac{1-\alpha-3\epsilon}{2}},
\end{equation}and similarly, when $\xi$ is fixed, 
\begin{equation}\label{eqn: triangle 3.5}
\int_{|\eta|\sim \frac{C}{2^j\delta}} |\hat\chi_{E_\delta}(\xi+\eta)|  \left|\xi+\frac{t_2}{t_1}g_{\theta}(\eta)\right|^{-\frac{1}{2}}(\sin\langle \xi,\eta\rangle)^{-\frac{d-2}{2}}\,d\eta \lesssim_\epsilon 2^{j\frac{\alpha-2d+1+\epsilon}{2}}\delta^{\frac{1-\alpha-3\epsilon}{2}}.
\end{equation}

Assume (\ref{eqn: triangle 3}) and (\ref{eqn: triangle 3.5}) for now, then by Schur's test, one obtains
\[
\begin{split}
(\ref{eqn: triangle 2})\lesssim_\epsilon &(2^j\delta)^{d-2}2^{j\frac{\alpha-2d+1+\epsilon}{2}}\delta^{\frac{1-\alpha-3\epsilon}{2}} \int_{|\xi|\sim \frac{C}{2^j\delta}} |\widehat{\chi_{E_\delta}\ast\rho_\delta}(\xi)|^2\,d\xi\\
\lesssim_\epsilon & (2^j\delta)^{d-2}2^{j\frac{\alpha-2d+1+\epsilon}{2}}\delta^{\frac{1-\alpha-3\epsilon}{2}} 2^{j(\alpha-d+\epsilon)}\delta^{d-\alpha-3\epsilon}=(2^j\delta)^{\frac{3\alpha}{2}-d-\frac{3}{2}+\frac{3}{2}\epsilon}\delta^{3d-3\alpha-6\epsilon},
\end{split}
\]where the second inequality follows from the $L^2$ estimate used above in the proof of Theorem \ref{main3}, more precisely from  (\ref{eqn: use for triangle}).

When $\alpha\geq \frac{2d}{3}+1$, since $2^j\delta\leq 1$, one has $(2^j\delta)^{\frac{3\alpha}{2}-d-\frac{3}{2}}\leq 1$, hence the contribution of this case to $|D^\delta|$ is bounded by
\[
\lesssim_\epsilon \delta^3\sum_{\frac{1}{2}\leq 2^j\leq \frac{1}{\delta}} \delta^{3d-3\alpha-\epsilon}\leq \log(\frac{1}{\delta})\delta^{3d-(3\alpha-3)-\epsilon},
\]which matches the desired estimate. When $\alpha< \frac{2d}{3}+1$, one has  
\[
(\ref{eqn: triangle 2})\lesssim_\epsilon (2^j\delta)^{\frac{3\alpha}{2}-d-\frac{3}{2}+\frac{3}{2}\epsilon}\delta^{3d-3\alpha-6\epsilon}=(2^j)^{\frac{3\alpha}{2}-d-\frac{3}{2}+\frac{3}{2}\epsilon}\delta^{2d-\frac{3\alpha}{2}-\frac{3}{2}-6\epsilon}.
\]Since $\alpha< \frac{2d}{3}+1$, the sum over $j$ converges when $\epsilon$ is sufficiently small, therefore, the contribution of this case to $|D^\delta|$ is bounded by
\[
\lesssim_\epsilon \delta^3\delta^{2d-\frac{3\alpha}{2}-\frac{3}{2}-\epsilon}=\delta^{3d-(d+\frac{3\alpha}{2}-\frac{3}{2})-\epsilon},
\]which completes the proof of this case.

It remains to prove estimates (\ref{eqn: triangle 3}) and (\ref{eqn: triangle 3.5}). We only prove (\ref{eqn: triangle 3}) below, as the other one can be obtained in the same way after a change of variable $\xi\mapsto \frac{t_1}{t_2}g_{-\theta}(\xi)$. By Cauchy-Schwarz, one has (\ref{eqn: triangle 3}) bounded by
\[
\left(\int_{|\xi|\sim \frac{C}{2^j\delta}} |\hat\chi_{E_\delta}(\xi+\eta)|^2\,d\xi \right)^{1/2}\left(\int_{|\xi|\sim \frac{C}{2^j\delta}}  \left|\xi+\frac{t_2}{t_1}g_{\theta}(\eta)\right|^{-1}(\sin\langle \xi,\eta\rangle)^{-(d-2)}\,d\xi\right)^{1/2}.
\]
According to Lemma 2.3 of \cite{IL}, and observing that the same bound holds true in the case of general triangles, the second factor is bounded by $(\frac{C}{2^j\delta})^{\frac{-1+d}{2}}$. 

The first factor can be estimated similarly as above. Since $|\xi|\sim|\eta|\sim \frac{C}{2^j\delta}$, after a change of variable, one has
\[
\int_{|\xi|\sim \frac{C}{2^j\delta}} |\hat\chi_{E_\delta}(\xi+\eta)|^2\,d\xi\lesssim \int_{|\xi|\lesssim \frac{C}{2^j\delta}} |\hat\chi_{E_\delta}(\xi)|^2\,d\xi\lesssim \int_{|\xi|\lesssim \frac{C}{2^j\delta}} |\widehat{\chi_{E_\delta}\ast \rho_r}(\xi)|^2\,d\xi,
\]which follows from the choice $r\sim 2^j\delta$ and the property $\hat{\rho_r}(\xi)=\hat{\rho}(r\xi)\gtrsim 1$ when $|\xi|\lesssim \frac{C}{2^j\delta}$. According to (\ref{eqn: use for triangle}) (also see estimate (2.11) of \cite{OO}), the above is further
\[
\lesssim \|\chi_{E_\delta}\ast \rho_r\|_{L^2}^2\lesssim_\epsilon r^{\alpha-d+\epsilon}\delta^{2(d-\alpha-2\epsilon)}\sim 2^{j(\alpha-d+\epsilon)}\delta^{d-\alpha-3\epsilon}.
\]

Therefore, one obtains
\[
(\ref{eqn: triangle 3})\lesssim_\epsilon 2^{j\frac{\alpha-d+\epsilon}{2}}\delta^{\frac{d-\alpha-3\epsilon}{2}} (2^j\delta)^{\frac{1-d}{2}}=2^{j\frac{\alpha-2d+1+\epsilon}{2}}\delta^{\frac{1-\alpha-3\epsilon}{2}},
\]and the proof of the first case is complete.

\subsubsection{Case $|\xi|\sim |\xi-\eta|\sim \frac{C}{2^j\delta}$}\label{sec: triangle symmetry}

The second case can be reduced to the first case above by making use of the symmetry of the surface measure $\sigma_{S_{\vec{t}}}$. Such an argument in the equilateral triangle case was derived in \cite{IL} for the study of a related problem, and we only sketch the general case here for the sake of completeness. 

For any $(x^0,y^0)$ such that $\Delta_{x^0 0 y^0}$ forms a triangle of sidelengths $t_1,t_2,t_3$ (to be more specific, we assume $|x^0|=t_1$, $|y^0|=t_2$, and $|x^0-y^0|=t_3$). One observes that
\[
\int f(x,y)d\sigma_{S_{\vec{t}}}(x,y)=\int_{O(d)}f(gx^0,gy^0)\,dg,
\]where $O(d)$ denotes the orthogonal group in $\mathbb{R}^d$ and $dg$ denotes the normalized Haar measure on $O(d)$. A direct computation shows that
\[
\hat\sigma_{S_{\vec{t}}}(\xi,\eta)=\int e^{-2\pi i(g(y^0-x^0)\cdot (-\xi)+gy^0\cdot (\xi+\eta))}\,dg.
\]

Observe that $\Delta_{(y^0-x^0)0 y^0}$ is a triangle with a permutation of the original three sides. Write $\vec{s}:=(t_3, t_2, t_1)$, then the point $(y^0-x^0,y^0)\in S_{\vec{s}}$. One thus has $\hat\sigma_{S_{\vec{t}}}(\xi,\eta)=\hat\sigma_{S_{\vec{s}}}(-\xi,\xi+\eta)$. Therefore, by a change of variable $\zeta=\xi+\eta$, one can reduce the estimate to the first case above (with $|\xi|\sim |\zeta|\sim \frac{C}{2^j\delta}$ and a new surface $S_{\vec{s}}$). The estimate in the first case obviously still holds for the surface $S_{\vec{s}}$, hence the proof is complete.

\vskip.125in
\subsection{The large $\alpha$ case, $d=2$}

As discussed in the introduction, when $\alpha\leq \frac{7}{4}$, the upper estimate follows from the corresponding bounds for the unit distance problem, obtained in \cite[Theorem 1.3]{OO}. Therefore, in this subsection, it suffices to prove that $g_2(V{\rm Tri}_{\vec{t}},\alpha)\leq 3\alpha-3$ if $\alpha\geq \frac{7}{4}$. 

Note that the argument in the previous section regarding the $d\geq 3$ case still works when $d=2$. (One does need to slightly change the argument in Section \ref{sec: triangle symmetry}, where the rotation symmetry only holds after decomposing the surface $S_{\vec{t}}$ to two parts.) However, in order to obtain the upper bound $3\alpha-3$ for $\alpha\geq \frac{7}{4}$, the argument fails to be sufficient. 

Below we adapt a method originated in \cite{GI}.

Recall from the previous subsection that, letting $D^\delta$ be as in (\ref{eqn: triangle 1}), one has
\[
|D^\delta|=\int_{E_\delta} \int_{E_\delta}\int_{E_\delta} \chi_{A_{t_1,\delta}}(y-x) \chi_{A_{t_2,\delta}}(z-y)\chi_{A_{t_3,\delta}}(x-z)\,dxdydz.
\]Let $\sigma_t$ denote the surface measure on the circle of radius $t$ in $\mathbb{R}^2$ and $\sigma_t^\delta:=\sigma_t\ast \rho_\delta$. Here, $\rho_\delta(\cdot):=\delta^{-2}\rho(\frac{\cdot}{\delta})$ is an approximate identity with $\rho\in C_0^\infty([0,1]^2)$, $\rho\geq 0$ and $\int \rho=1$. Then, one has that
\[
|D^\delta|\lesssim \delta^3 \int\int\int \chi_{E_{\delta}}\ast\rho_\delta (x) \chi_{E_{\delta}}\ast\rho_\delta (y) \chi_{E_{\delta}}\ast\rho_\delta (z)\sigma_{t_1}^\delta(y-x) \sigma_{t_2}^\delta(z-y)\sigma_{t_3}^\delta(x-z)\,dxdydz.
\]

In general, define
\[
\Lambda^\delta_{\vec{t}}(f_1,f_2,f_3):=\int\int\int\sigma_{t_1}^\delta(y-x) \sigma_{t_2}^\delta(z-y)\sigma_{t_3}^\delta(x-z) f_1(x) f_2(y) f_3(z)\,dxdydz.
\]Then one has $|D^\delta|\lesssim \delta^3\Lambda^\delta_{\vec{t}}(\chi_{E_\delta}\ast \rho_\delta,\chi_{E_\delta}\ast\rho_\delta, \chi_{E_\delta}\ast\rho_\delta)$.

Define $f_\delta^\beta(x):=\frac{1}{\Gamma(\beta/2)}\chi_{E_\delta}\ast\rho_\delta\ast |\cdot|^{-2+\beta}(x)$ (initially defined for ${\rm Re}(\beta)>0$ and extended to the complex plane by analytic continuation, see Gelfand--Shilov \cite[p.74]{GS}). When $\beta=0$, $f_\delta^0$ becomes a constant multiple of $\chi_{E_\delta}\ast\rho_\delta$ convolved with the Dirac delta function, and hence is equal to $\chi_{E_\delta}\ast\rho_\delta$ up to a constant (since convolving with the delta function gives back the original function). 
 In the following, we will only be interested in the range ${\rm Re}(\beta)\leq \frac{1}{4}$. Note that $f_\delta^\beta\in L^2(\mathbb{R}^2)$, which follows from Plancherel and properties of $\rho_\delta$, as well as the estimate $|E_\delta|\lesssim \delta^{d-\alpha-\epsilon}$. 

Now, define $F(\beta):=\Lambda^\delta_{\vec{t}}(f_\delta^{-\beta}, \chi_{E_\delta}\ast \rho_\delta, f_\delta^{\beta})=:\langle B(f_\delta^{-\beta}, \chi_{E_\delta}\ast \rho_\delta), f_\delta^\beta\rangle$, where the bilinear operator
\[
B(g,h):=\int\int g(x-u) h(x-v) \sigma_a^\delta(u) \sigma_b^\delta(v) \sigma^\delta(u-v)\,dudv.
\]Here, we have rescaled the triangle to make it have side lengths $(a,b,1)$, where $0<a,b\lesssim 1$. Our main estimate is the following.

\begin{lemma}\label{lem: triangle 2D}
Let $\alpha\geq \frac{7}{4}$. Then for all $\beta$ satisfying $-\frac{1}{4}\leq {\rm Re}(\beta)\leq \frac{1}{4}$, we have $|F(\beta)|\lesssim_\epsilon \delta^{3(2-\alpha)-\epsilon}$.
\end{lemma}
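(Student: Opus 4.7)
The plan is to apply Stein's analytic interpolation theorem to the holomorphic family $\{F(\beta)\}$ on the strip $S=\{\beta\in\CC:-1/4\le\mathrm{Re}(\beta)\le 1/4\}$, reducing the bound at $\beta=0$ to uniform estimates on the two vertical boundary lines. The analyticity of $F$ on a neighborhood of $S$ follows from the Gelfand--Shilov continuation of the Riesz distributions $|\cdot|^{-2+\beta}/\Gamma(\beta/2)$: on the Fourier side, $\widehat{f_\delta^\beta}(\xi)$ coincides, up to a factor analytic and bounded on $S$, with $\widehat{\chi_{E_\delta}}(\xi)\,\hat\rho(\delta\xi)\,|\xi|^{-\beta}$, and the mollifier $\hat\rho(\delta\cdot)$ absorbs any growth in $|\xi|$ for fixed $\delta$. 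At $\beta=0$ one recovers $f_\delta^0$ as a constant multiple of $\chi_{E_\delta}\ast\rho_\delta$, so $F(0)$ is comparable to $\Lambda_{\vec{t}}^\delta(\chi_{E_\delta}\ast\rho_\delta,\chi_{E_\delta}\ast\rho_\delta,\chi_{E_\delta}\ast\rho_\delta)$, the quantity governing $|D^\delta|$.

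The substantive step is to prove $|F(\beta)|\lesssim_\epsilon\delta^{3(2-\alpha)-\epsilon}$ uniformly on $\mathrm{Re}(\beta)=\pm 1/4$. By symmetry of $F$ under $\beta\mapsto-\beta$ (exchanging its first and third slots) it suffices to treat $\mathrm{Re}(\beta)=1/4$. Passing to the Fourier side as in Section~\ref{sec: triangle} via the identity $\int e^{i(z,z,z)\cdot(\xi,\eta,\zeta)}\,dz=\delta_0(\xi+\eta+\zeta)$, $F(\beta)$ becomes (up to absolute constants) an integral over $(\xi,\eta)\in\RR^2\times\RR^2$ of
\[
\widehat{g_\delta}(\xi)\,\widehat{g_\delta}(\eta)\,\widehat{g_\delta}(\xi+\eta)\,|\xi|^{-\beta}|\eta|^{\beta}\,\widehat\sigma_{S_{\vec t}}^\delta(\xi,\eta),
\]
where $g_\delta:=\chi_{E_\delta}\ast\rho_\delta$ and $\widehat\sigma_{S_{\vec t}}^\delta$ is controlled by the $d=2$ specialization of the stationary-phase lemma used earlier. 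On $\mathrm{Re}(\beta)=1/4$ the weight $|\xi|^{-1/4}|\eta|^{1/4}$ is designed, together with the $|\cdot|^{-1/2}$ decay of $\widehat\sigma_{S_{\vec t}}$ in the balanced regime $|\xi|\sim|\eta|$, exactly so that Cauchy--Schwarz and Schur's test (parallel to the derivation of (\ref{eqn: triangle 3})) reduce the trilinear form to a product of weighted $L^2$ norms of $\widehat{g_\delta}$, which in turn are controlled by Lemma~\ref{lemma: unit dist alpha}. The $\{\delta_i\}$-discrete $\alpha$-regularity of $E$ then yields $\|g_\delta\|_{L^2}^2\lesssim|E_\delta|\lesssim\delta^{2-\alpha-\epsilon}$, and cubing produces the target exponent.

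The principal obstacle will be the bookkeeping in the imbalanced regimes $|\eta|\gg|\xi|$ and $|\xi+\eta|\gg|\xi|,|\eta|$: the two-dimensional decay $|\widehat\sigma_{S_{\vec{t}}}|\lesssim|\cdot|^{-1/2}$ is weaker than its higher-dimensional counterpart and only sharp in the principal direction, so the gain from the fractional weights $|\xi|^{\mp 1/4}|\eta|^{\pm 1/4}$ must be distributed carefully across the three factors of $\widehat{g_\delta}$. As in Section~\ref{sec: triangle symmetry}, the symmetry of the triangle among its three sides allows each imbalanced case to be reduced to the balanced one $|\xi|\sim|\eta|$ by relabeling the sides $(t_1,t_2,t_3)$ and changing variable $\zeta=\xi+\eta$; the weights $|\xi|^{-\beta}|\eta|^{\beta}$ remain bounded on $S$ and transform in a controllable way. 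The threshold $\alpha\ge 7/4$ appears naturally: the three $1/2$-order Sobolev gains from the three circles, combined with the energy of a Frostman measure on $E$, match exactly at the critical Sobolev exponent $\alpha=7/4$. Once the boundary estimates are in hand, Stein's interpolation theorem immediately delivers Lemma~\ref{lem: triangle 2D} at $\beta=0$.
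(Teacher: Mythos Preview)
Your outline agrees with the paper at the level of the interpolation scheme: reduce to the boundary lines $\mathrm{Re}(\beta)=\pm 1/4$ via the Three Line Lemma and use the symmetry $\beta\mapsto -\beta$ to treat only $\mathrm{Re}(\beta)=1/4$. The substantive discrepancy is in how you propose to establish the boundary estimate, and here there is a genuine gap.

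You plan to pass to the Fourier side, invoke the stationary-phase bound for $\widehat\sigma_{S_{\vec t}}$, and run Cauchy--Schwarz and Schur's test exactly as in the $d\ge 3$ argument of Section~\ref{sec: triangle}. But the paper states explicitly, at the start of the $d=2$ subsection, that this route ``fails to be sufficient'' in dimension two to reach the exponent $3\alpha-3$ for $\alpha\ge 7/4$. The reason is quantitative: the $d=2$ decay $|\widehat\sigma_{S_{\vec t}}(\xi,\eta)|\lesssim |\cdot|^{-1/2}$ loses the factor $|\xi|^{-(d-2)}(\sin\langle\xi,\eta\rangle)^{-(d-2)/2}$ that drives the $d\ge 3$ computation, and the single-scale estimate (the $d=2$ specialization of (\ref{eqn: triangle 2})) only yields $(2^j\delta)^{3\alpha/2-7/2}\delta^{6-3\alpha}$, which after summing in $j$ gives a power of $\delta$ strictly worse than $\delta^{3(2-\alpha)}$ throughout $7/4\le\alpha<7/3$. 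The fractional weights you introduce do not rescue this: in the dominant balanced regime $|\xi|\sim|\eta|\sim|\xi+\eta|$ the factor $|\xi|^{-\beta}|\eta|^{\beta}$ (or, with the correct placement coming from $F(\beta)=\Lambda^\delta(f_\delta^{-\beta},\chi_{E_\delta}\ast\rho_\delta,f_\delta^{\beta})$, the factor $|\xi|^{\beta}|\xi+\eta|^{-\beta}$) is $\sim 1$, so the analytic family contributes nothing there. Your assertion that Lemma~\ref{lemma: unit dist alpha} then controls each piece by $\delta^{2-\alpha}$ and that ``cubing'' closes the argument is therefore not justified.

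What the paper actually does on $\mathrm{Re}(\beta)=1/4$ is entirely different: it applies H\"older in physical space,
\[
|F(\beta)|\le \|B(f_\delta^{-\beta},\chi_{E_\delta}\ast\rho_\delta)\|_{L^1}\,\|f_\delta^{\beta}\|_{L^\infty},
\]
and then invokes the Greenleaf--Iosevich bilinear bound $B:L^2_{-3/8}\times L^2_{-1/8}\to L^1$ (a result from \cite{GI}, not derivable by Schur's test on the trilinear Fourier expression). The three factors $\|f_\delta^{-\beta}\|_{L^2_{-3/8}}$, $\|\chi_{E_\delta}\ast\rho_\delta\|_{L^2_{-1/8}}$, and $\|f_\delta^{\beta}\|_{L^\infty}$ are each bounded by $\delta^{2-\alpha-\epsilon}$ via Lemmas~\ref{lem: triangle 2D 1} and \ref{lem: triangle 2D 2}. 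The threshold $\alpha\ge 7/4$ enters precisely in the $L^\infty$ bound for $f_\delta^{1/4}$, where one must control $\sup_x\int\chi_{E_\delta}(y)|x-y|^{-7/4}\,dy$; this is the step your sketch does not anticipate.
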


It is easy to see that the lemma would imply that $|D^\delta|\lesssim \delta^3F(0)\lesssim \delta^{3+3(2-\alpha)-\epsilon}$, which would imply that $\dim_{\mathcal{H}}({\rm VTri}_{\vec{t}},\alpha)\leq 3\alpha-3$ when $\alpha\geq\frac{7}{4}$.  

\begin{proof}[Proof of Lemma \ref{lem: triangle 2D}]
Note that the domain of the integral defining $F(\beta)$ is compact, hence one has a trivial upper bound of $F(\beta)$ (depending on $\delta$). According to the Three Line Lemma, it suffices to check the desired bound at ${\rm Re}(\beta)=\pm\frac{1}{4}$. Furthermore, it suffices to study the ${\rm Re}(\beta)=\frac{1}{4}$ case, since by interchanging the role of the first and the third input in $F(\beta)$, the other case is symmetric.

One first has
\[
|F(\beta)|= |\langle B(f_\delta^{-\beta}, \chi_{E_\delta}\ast\rho_\delta), f_\delta^{\beta} \rangle|\lesssim \|B(f_\delta^{-\beta}, \chi_{E_\delta}\ast\rho_\delta)\|_{L^1(\mathbb{R}^2)}\|f_\delta^{\beta}\|_{L^\infty(\mathbb{R}^2)}.
\]It is proved in \cite[Theorem 3.1]{GI} that
\[
B:\, L^2_{-\beta_1}(\mathbb{R}^2)\times L^2_{-\beta_2}(\mathbb{R}^2)\to L^1(\mathbb{R}^2),\quad \text{if } \beta_1+\beta_2=\frac{1}{2},\, \beta_1,\beta_2\geq 0,
\]with constant independent of $\delta$. Here, $\|f\|^2_{L^2_s}:=\int |\hat{f}(\xi)|^2 (1+|\xi|)^{2s}\,d\xi$. Combined with the above, this implies that
\[
|F(\beta)|\lesssim \|f_\delta^{-\beta}\|_{L^2_{-\frac{3}{8}}}\|\chi_{E_\delta}\ast\rho_\delta\|_{L^2_{-\frac{1}{8}}}\|f_\delta^{\beta}\|_{L^\infty}.
\]Then the desired bound easily follows from Lemma \ref{lem: triangle 2D 1} and \ref{lem: triangle 2D 2} below.
\end{proof}

\begin{lemma}\label{lem: triangle 2D 1}
If $\alpha\geq \frac{7}{4}$ and ${\rm Re}(\beta)=\frac{1}{4}$, then $\|f_\delta^{\beta}\|_{L^\infty(\mathbb{R}^2)}\lesssim_\epsilon \delta^{2-\alpha-\epsilon}$.
\end{lemma}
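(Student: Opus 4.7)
The plan is to reduce the lemma to a pointwise bound on the Riesz-type potential
\[
I_\beta(x) := \int \chi_{E_\delta} \ast \rho_\delta(y) \, |x-y|^{-2+\beta} \, dy,
\]
since $f_\delta^\beta(x) = \Gamma(\beta/2)^{-1} I_\beta(x)$ and $\Gamma(\beta/2)^{-1}$ is harmless on the line $\operatorname{Re}(\beta) = 1/4$ (the gamma factor stays away from its poles). On this line $||x-y|^{-2+\beta}| = |x-y|^{-7/4}$, so uniformly in $x \in \mathbb{R}^2$,
\[
|f_\delta^\beta(x)| \lesssim \int \chi_{E_\delta} \ast \rho_\delta(y) \, |x-y|^{-7/4} \, dy.
\]
I would then dyadically decompose the integrand into the piece $|x-y| \le \delta$ and the shells $|x-y| \sim 2^j \delta$ for $0 \le j \lesssim \log(1/\delta)$ (using that $E$ is contained in the unit ball).

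For the innermost piece, using $\|\chi_{E_\delta}\ast \rho_\delta\|_{L^\infty} \lesssim 1$ and integrability of $|\cdot|^{-7/4}$ in $\mathbb{R}^2$ up to scale $\delta$,
\[
\int_{|x-y| \le \delta} |x-y|^{-7/4} \, dy \lesssim \delta^{1/4},
\]
which is $\le \delta^{2-\alpha}$ whenever $\alpha \ge 7/4$. For each dyadic shell I would use $\chi_{E_\delta} \ast \rho_\delta \lesssim \chi_{E_{2\delta}}$ together with the $\{\delta_i\}$-discrete $\alpha$-regularity of $E$ (applied at the scales $(r,\delta)$ with $r = 2^{j+1}\delta$), yielding $|E_\delta \cap B(x, 2^{j+1}\delta)| \lesssim_\epsilon (2^{j+1})^{\alpha+\epsilon}\delta^{2-\epsilon}$. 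Hence the contribution of the $j$-th shell is
\[
(2^j \delta)^{-7/4} \cdot (2^j)^{\alpha+\epsilon} \delta^{2-\epsilon} = (2^j)^{\alpha - 7/4 + \epsilon} \, \delta^{1/4 - \epsilon}.
\]

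Summing over $j$: when $\alpha > 7/4$ the sum is geometric and dominated by $j \sim \log_2(1/\delta)$, giving $\delta^{2-\alpha - 2\epsilon}$; when $\alpha = 7/4$ the sum picks up a logarithm that is absorbed into $\delta^{-\epsilon}$, again giving $\delta^{2-\alpha-\epsilon} = \delta^{1/4 - \epsilon}$. Both bounds dominate the $|x-y|\le\delta$ piece, proving the claim. The only place that is not entirely mechanical is the verification that the threshold $\alpha \ge 7/4$ is exactly what makes the geometric sum balance—this is where the $7/4$ in the statement originates, and it aligns with the critical value appearing in the applications of the Three Line Lemma in Lemma \ref{lem: triangle 2D}.
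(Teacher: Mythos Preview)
Your proof is correct and follows essentially the same strategy as the paper: reduce to a pointwise bound on $\int \chi_{E_\delta}(y)|x-y|^{-7/4}\,dy$, then dyadically decompose and invoke the $\{\delta_i\}$-discrete $\alpha$-regularity of $E$ on each shell. The only cosmetic difference is that the paper removes $\rho_\delta$ at the outset via $\|\rho_\delta\|_{L^1}=1$ and parametrizes the shells as $|x-y|\sim 2^{-k}$ (splitting at the scale $2^{-k}=\delta$), whereas you keep $\chi_{E_\delta}\ast\rho_\delta\lesssim \chi_{E_{c\delta}}$ and parametrize as $|x-y|\sim 2^j\delta$; both lead to the same geometric sum with critical exponent $\alpha-7/4$.
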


\begin{proof}
One first observes, by taking the modulus in the definition, that $|f_\delta^\beta|\lesssim f_\delta^{{\rm Re}(\beta)}$ when ${\rm Re}(\beta)>0$. This, combined with assumption on $\rho_\delta$ and the definition of $f_\delta^\beta$, implies that 
\[
\|f_\delta^{\beta}\|_{L^\infty(\mathbb{R}^2)}\lesssim \|f_\delta^{\frac{1}{4}}\|_{L^\infty(\mathbb{R}^2)}\leq\|\chi_{E_\delta}\ast |\cdot|^{-2+\frac{1}{4}}\|_{L^\infty}\|\rho_\delta\|_{L^1}\lesssim \sup_x\int \chi_{E_\delta}(y)|x-y|^{-\frac{7}{4}}\,dy.
\]

Fix any $x\in\mathbb{R}^2$, one has from the trivial estimate that
\[
\int_{|x-y|\gtrsim 1} \chi_{E_\delta}(y)|x-y|^{-\frac{7}{4}}\,dy\lesssim |E_\delta|\lesssim_\epsilon \delta^{2-\alpha-\epsilon}.
\]For the other part, we decompose the integral as
\[
\int_{|x-y|\lesssim 1} \chi_{E_\delta}(y)|x-y|^{-\frac{7}{4}}\,dy\lesssim\sum_{k=1}^\infty 2^{\frac{7}{4}k}\int_{|x-y|\sim 2^{-k}}\chi_{E_\delta}(y)\,dy.
\]

If $2^{-k}\geq \delta$, by the $\{\delta_i\}$-discrete $\alpha$-regular assumption, 
\[
\int_{|x-y|\sim 2^{-k}}\chi_{E_\delta}(y)\,dy\lesssim_\epsilon 2^{-k(\alpha+\epsilon)}\delta^{2-\alpha-\epsilon}.
\]If $2^{-k}<\delta$, one simply uses $\int_{|x-y|\sim 2^{-k}}\chi_{E_\delta}(y)\,dy\lesssim 2^{-2k}$. Combining the two cases together, one obtains
\[
\begin{split}
& \int_{|x-y|\lesssim 1} \chi_{E_\delta}(y)|x-y|^{-\frac{7}{4}}\,dy\\
\lesssim_\epsilon & \sum_{k=1}^{\log(\delta^{-1})} 2^{\frac{7}{4}k}2^{-k\alpha}\delta^{2-\alpha-\epsilon}+\sum_{k=\log(\delta^{-1})}^\infty 2^{\frac{7}{4}k} 2^{-2k}\lesssim \delta^{2-\alpha-\epsilon}.
\end{split} 
\]

\end{proof}

\begin{lemma}\label{lem: triangle 2D 2}
If $\alpha\geq \frac{7}{4}$ and ${\rm Re}(\beta)=\frac{1}{4}$, then $\|f_\delta^{-\beta}\|_{L^2_{-\frac{3}{8}}(\mathbb{R}^2)}, \,\|\chi_{E_\delta}\ast\rho_\delta\|_{L^2_{-\frac{1}{8}}(\mathbb{R}^2)}\lesssim_\epsilon  \delta^{2-\alpha-\epsilon}$.
\end{lemma}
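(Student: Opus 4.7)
My plan is to pass everything to the Fourier side, use the explicit Fourier representation of $f_\delta^{\pm\beta}$, and reduce both norms to a single weighted $L^2$ estimate for $\widehat{\chi_{E_\delta}}$ that is then converted back to a position-space double integral already controlled by the argument inside Lemma~\ref{lem: triangle 2D 1}. First I would record that the Fourier transform of $|\cdot|^{-2+\beta}$ on $\mathbb{R}^2$ is (a constant times) $|\xi|^{-\beta}$, extended analytically in $\beta$, so that
\[
\widehat{f_\delta^\beta}(\xi) = c_\beta\,\widehat{\chi_{E_\delta}}(\xi)\,\hat\rho(\delta\xi)\,|\xi|^{-\beta},
\]
with $|c_\beta|$ uniformly bounded on the vertical line $\mathrm{Re}(\beta)=\pm 1/4$. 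Substituting this into the two weighted norms and using $||\xi|^{\beta}|^2=|\xi|^{2\mathrm{Re}(\beta)}=|\xi|^{1/2}$ gives
\[
\|\chi_{E_\delta}\ast\rho_\delta\|^2_{L^2_{-1/8}} = \int |\widehat{\chi_{E_\delta}}(\xi)|^2\,|\hat\rho(\delta\xi)|^2\,(1+|\xi|)^{-1/4}\,d\xi,
\]
\[
\|f_\delta^{-\beta}\|^2_{L^2_{-3/8}} \lesssim \int |\widehat{\chi_{E_\delta}}(\xi)|^2\,|\hat\rho(\delta\xi)|^2\,|\xi|^{1/2}(1+|\xi|)^{-3/4}\,d\xi.
\]

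Next I would handle both integrals by the same position-space reduction. View $(1+|\xi|)^{-1/4}$ as comparable to $(1+|\xi|^2)^{-1/8}$, whose inverse Fourier transform is the Bessel potential $G_{1/4}$ on $\mathbb{R}^2$, satisfying $G_{1/4}(x)\lesssim |x|^{-7/4}$ for $|x|\leq 1$ with exponential decay at infinity (Stein, \emph{Singular Integrals}, Chap.~V). By Plancherel, the first integral is bounded by a double integral $\int\!\!\int \chi_{E_\delta}(x)\chi_{E_\delta}(y)\,K(x-y)\,dx\,dy$ where $|K(z)|\lesssim |z|^{-7/4}$ on $|z|\leq 1$ and $K$ decays rapidly for $|z|>1$. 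For the second integral I would split $|\xi|\leq 1$ and $|\xi|>1$: on $|\xi|>1$ the multiplier is comparable to $(1+|\xi|)^{-1/4}$ and the same reduction applies, while on $|\xi|\leq 1$ one uses $|\widehat{\chi_{E_\delta}}(\xi)|\leq |E_\delta|\lesssim\delta^{2-\alpha-\epsilon}$ and integrates $|\xi|^{1/2}$ against Lebesgue measure to get a contribution $\lesssim\delta^{2(2-\alpha)-2\epsilon}$ directly.

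At this point everything reduces to the single-variable bound
\[
\sup_x \int \chi_{E_\delta}(y)\,|x-y|^{-7/4}\,dy \lesssim_\epsilon \delta^{2-\alpha-\epsilon},
\]
which is precisely what is established inside Lemma~\ref{lem: triangle 2D 1} via a dyadic decomposition of the annuli $\{|x-y|\sim 2^{-k}\}$ combined with the $\{\delta_i\}$-discrete $\alpha$-regularity of $E$, with $\alpha\geq 7/4$ sitting exactly at the break-even exponent. Multiplying by $\|\chi_{E_\delta}\|_{L^1}=|E_\delta|\lesssim\delta^{2-\alpha-\epsilon}$ controls the double integral by $\delta^{2(2-\alpha)-2\epsilon}$, and taking square roots delivers the claimed bound for both quantities. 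The main obstacle I anticipate is purely bookkeeping: keeping the analytic continuation of $f_\delta^{-\beta}$ legitimate when $\mathrm{Re}(-\beta)=-1/4$ (so that $|\cdot|^{-2-\beta}$ is only a tempered distribution), and verifying that the exponent $7/4$ — exactly the value at which the naive continuum integral $\int |x-y|^{-7/4}\,dy$ would diverge — is rescued by the discrete scale $\delta$ appearing in the $\alpha$-regular covering estimate. The Fourier-side splitting $|\xi|\leq 1$ versus $|\xi|>1$ avoids the need to explicitly invert the awkward multiplier $|\xi|^{1/2}(1+|\xi|)^{-3/4}$.
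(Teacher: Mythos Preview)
Your proposal is correct and follows essentially the same route as the paper: pass to the Fourier side, observe that both weighted norms are controlled by $\int |\widehat{\chi_{E_\delta}}(\xi)|^2 |\xi|^{-1/4}\,d\xi$, rewrite this via Plancherel as $\int_{E_\delta}\int_{E_\delta}|x-y|^{-7/4}\,dx\,dy$, and then invoke the pointwise bound from Lemma~\ref{lem: triangle 2D 1} together with $|E_\delta|\lesssim\delta^{2-\alpha-\epsilon}$. The paper is simply more direct: since the Sobolev weights are negative, it replaces $(1+|\xi|)^{-s}$ by the larger $|\xi|^{-s}$ at the outset and uses the Riesz (not Bessel) potential identity, which makes your $|\xi|\le 1$ versus $|\xi|>1$ split and the Bessel-kernel discussion unnecessary.
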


\begin{proof}
By definition and Fourier inversion, one has
\[
\|\chi_{E_\delta}\ast\rho_\delta\|^2_{L^2_{-\frac{1}{8}}(\mathbb{R}^2)}\lesssim\int |\hat{\chi}_{E_\delta}|^2(\xi) |\xi|^{-\frac{1}{4}}\,d\xi\sim \int_{E_\delta}\int_{E_\delta} |x-y|^{-\frac{7}{4}}\,dxdy.
\]Applying the bound in Lemma \ref{lem: triangle 2D 1} for each fixed $x$, one has the integral on the right hand side above is
\[
\lesssim |E_\delta|\cdot \delta^{2-\alpha-\epsilon}\lesssim \delta^{2(2-\alpha)-\epsilon}.
\]

The other term can be estimated similarly. 
\[
\begin{split}
\|f_\delta^{-\beta}\|^2_{L^2_{-\frac{3}{8}}(\mathbb{R}^2)}\lesssim \int \left|(f_\delta^{-\beta})^\wedge(\xi)\right|^2|\xi|^{-\frac{3}{4}}\,d\xi\lesssim &\int |\hat{\chi}_{E_\delta}|^2(\xi) |\xi|^{2\beta-\frac{3}{4}}\,d\xi\\
\lesssim &\int |\hat{\chi}_{E_\delta}|^2(\xi) |\xi|^{-\frac{1}{4}}\,d\xi.
\end{split}
\]Hence, the desired bound follows in the same way as above.
\end{proof}

\subsection{The small $\alpha$ case, $d\geq 6$}

In this section, we restrict to the case $d\geq 6$ and $\alpha\leq \lfloor \frac{d}{3}\rfloor-1$, and our goal is to prove that $g_d(V{\rm Tri}_{\vec{t}},\alpha)=3\alpha$, whenever $\vec{t}=(t_1,t_2,t_3)$ forms an acute triangle. The upper bound is trivial, which can be shown by the same argument in Section \ref{sec: trivial upper bound}, hence it suffices to find an example establishing the lower bound. 

Since $g_{d+1}\geq g_d$, it suffices to consider the case that $d$ is an integer multiple of $3$. Since the triangle is acute, there exist $A,B,C>0$ satisfying
\[
A+B=t_1^2,\quad, B+C=t_2^2,\quad C+A=t_3^2.
\]Let $K_A\subset A^{1/2}S^{\frac{d}{3}-1}$ be an AD regular set of Hausdorff dimension $\alpha$, and similarly define $K_B\subset B^{1/2}S^{\frac{d}{3}-1}, K_C\subset C^{1/2}S^{\frac{d}{3}-1}$. Define the set $E=E_A\cup E_B\cup E_C$ where
\[
E_A:=\{(x,0,0)\in \mathbb{R}^{\frac{d}{3}}\times \mathbb{R}^{\frac{d}{3}}\times \mathbb{R}^{\frac{d}{3}}:\, x\in K_A\},
\]
\[
E_B:=\{ (0, x,0)\in \mathbb{R}^{\frac{d}{3}}\times \mathbb{R}^{\frac{d}{3}}\times \mathbb{R}^{\frac{d}{3}}:\, x\in  K_B\},
\]
\[
E_C:=\{(0,0,x)\in \mathbb{R}^{\frac{d}{3}}\times \mathbb{R}^{\frac{d}{3}}\times \mathbb{R}^{\frac{d}{3}}:\, x\in K_C\}.
\]

Then, it is easy to see that any three points in $E_A, E_B, E_C$ form a triangle of the given sidelength $t_1,t_2,t_3$. Therefore, $\dim_{\mathcal{H}}(V{\rm Tri}_{\vec{t}}(E))\geq 3\alpha$.

\bigskip
\section{Phong-Stein condition: Proof of Theorem \ref{main3PhongStein} }\label{proof of main3PhongStein}

Let $k\geq 1$ and $E\subset \mathbb{R}^d$ be a compact $\{\delta_i\}$-discrete $\alpha$-regular set that is contained in the unit ball and has Hausdorff dimension $\alpha$. 
Assume that $\vec{t}=(t_1,\cdots, t_{k})\in \mathbb{R}^k_+$ and $t_i\sim 1$, $i=1,\ldots, k$. 
Define
$$D_{k, \phi}^\delta  : =   \{ (x_1, \cdots, x_{k+1})\in E^{k+1}_{\delta} : | \phi(x_i, x_{i+1} ) - t_i|  \le \delta \text{  for each } i =1, \dots, k\},$$
where $\phi$ is as in the statement of Theorem \ref{main3PhongStein} and $E^{k+1}_{\delta}$ denotes the $(k+1)$-fold Cartesian product of the set $E_{\delta}$. Note that we have suppressed the subscript $\vec{t}$ to keep the notation more concise.

Given any $i$, we will show that $\forall \epsilon>0$, 

\begin{equation}\label{eqn: unit distance upp main phi}
|D_{k, \phi}^{\delta_i} | \lesssim \delta_i^{(k+1)d-u(k,d,\alpha)-\epsilon},
\end{equation}where
\[
u(k,d,\alpha):=\begin{cases} (k+1)\alpha-k, & \frac{d+1}{2}\leq \alpha\leq d,\\ \frac{kd}{2}+\alpha-\frac{k}{2},& \alpha\leq \frac{d+1}{2}.\end{cases}
\]  

Since $D_{k,\phi}^{\delta_i}$ contains the $\delta_i$-neighborhood of $VS^k_{\phi}(E)$, estimate (\ref{eqn: unit distance upp main phi}) implies that $\underline{\rm dim}_{\mathcal{M}}(VS^k_{\phi}(E))\leq u(k,d,\alpha)$, where $\underline{\rm dim}_{\mathcal{M}}$ denotes the lower Minkowski dimension, hence the desired upper bound in Theorem \ref{main3PhongStein} follows. We write $\delta=\delta_i$ below.

Letting $\Psi: \mathbb{R} \rightarrow \mathbb{R}$ and $\Psi_0:\mathbb{R}^d\times \mathbb{R}^d \rightarrow \mathbb{R}$ denote non-negative smooth bump functions centered at the origin in $\mathbb{R}$ and $\mathbb{R}^{2d}$ respectively, the $(k+1)$-fold Lebesuge measure of $D_{k, \phi}^\delta$  is comparable to
\begin{equation}\label{blahsingle}
\int_{E_{\delta}} \cdots  \int_{E_{\delta}} \prod_{i=1}^{k} \Psi  \left( \frac{  \phi(x_i,x_{i+1}) - t_i}{\delta}  \right)\Psi_0(x_i,x_{i+1})  \, dx_1\cdots dx_{k+1}.
  \end{equation}

Setting
\begin{equation}\label{eqn op}
Tf(x): =T_{\phi}^{\delta}(f)(x) :=   \frac{1}{\delta}  \int  f(y) \Psi\left( \frac{  \phi(x,y) - t}{\delta}  \right)  \Psi_0(x,y)  \,dy, 
\end{equation}
we have 
\begin{equation}\label{blahblah}
|  D_{k, \phi}^\delta | \sim 
\delta^k \int_{E_{\delta}}  T( \chi_{E_{\delta}} f_{k-1})(x)\, dx,
\end{equation}
where we set $f_0(x) =\chi_{E_{\delta}} (x)$ and  $f_{i}(x) = T(f_{i-1} \chi_{E_{\delta}} )(x)$, for $i\in \{1, \dots, k\}$, and we drop the dependence on $t$ as it does not change the calculation.

Applying Cauchy-Schwarz, we have
\begin{equation}\label{eqn key single PS}
|  D_{k, \phi}^\delta |  \lesssim 
\delta^{k}  |E_{\delta}|^{1/2} \left(   \int_{E_{\delta}}  \left|    T( \chi_{E_{\delta}} f_{k-1})(x)  \right|^2  \,dx   \right)^{1/2}.
\end{equation}
\vskip.125in

To bound this expression, we recall that our assumption on $E$ guarantees that $|E_{\delta}| \lesssim \delta^{d-\alpha-\epsilon}$, and we then iteratively apply the following lemma.

\begin{lemma}\label{lemma key single PS} With $T$ as in \eqref{eqn op}, and with $\phi$ satisfying \eqref{nondeg} and (\ref{mongeampere}), we have
$$  \left(   \int_{E_{\delta}}   |  T(f(x))|^2  \,dx  \right)^{1/2}   \leq C_\epsilon \delta^{\beta(d,\alpha)-1-\epsilon}\left(\int_{ E_\delta} |f(x)|^2   \,dx \right)^{1/2}, 
$$
where
\[
\beta(d,\alpha)-1:=\begin{cases} d-\alpha,& \frac{d+1}{2}\leq \alpha\leq d,\\ \frac{d-1}{2},& \alpha\leq \frac{d+1}{2}.\end{cases}
\]
\end{lemma}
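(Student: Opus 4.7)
The plan is to mirror the proof of Lemma \ref{lemma: unit distance upp L2}, replacing the spherical surface measure $\sigma_t$ by a $\phi$-dependent level-set measure on $\{y:\phi(x,y)=t\}$, and replacing the Fourier decay of $\widehat{\sigma_t}$ by the Phong-Stein theorem for generalized Radon transforms. Concretely, the proof merges the FIO machinery developed in \cite{ITU} for the $k=1$ Phong-Stein setting with the fractal Fourier-mass argument of Lemma \ref{lemma: unit dist alpha} adapted from \cite{OO}.

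First, by duality, it suffices to bound the bilinear pairing $|\langle Tf,g\rangle|$ for $f,g$ supported in $E_\delta$. Applying Fourier inversion on the bump $\Psi$, write
\[
\tfrac{1}{\delta}\Psi\!\left(\tfrac{\phi(x,y)-t}{\delta}\right) \;=\; \int_{\mathbb{R}} \widehat{\Psi}(\delta\lambda)\, e^{2\pi i\lambda(\phi(x,y)-t)}\, d\lambda,
\]
so that $T = \int \widehat{\Psi}(\delta\lambda)\,e^{-2\pi i\lambda t}\,T_\lambda\,d\lambda$, where
\[
T_\lambda f(x) \;:=\; \int e^{2\pi i\lambda\phi(x,y)}\Psi_0(x,y)\, f(y)\,dy.
\]
The Schwartz decay of $\widehat{\Psi}$ confines the effective integration to $|\lambda|\lesssim \delta^{-1}$, which I would decompose dyadically into bands $|\lambda|\sim 2^j$ with $1\leq 2^j\lesssim \delta^{-1}$.

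Second, on each such band I combine two ingredients. The first is the Phong-Stein theorem: the Monge-Amp\`ere condition \eqref{mongeampere} is precisely the Phong-Stein rotational curvature hypothesis, and it yields $\|T_\lambda\|_{L^2(\mathbb{R}^d)\to L^2(\mathbb{R}^d)}\lesssim (1+|\lambda|)^{-(d-1)/2}$. The second is the fractal Fourier-mass estimate extracted from the $\{\delta_i\}$-discrete $\alpha$-regularity of $E$ exactly as in the proof of Lemma \ref{lemma: unit dist alpha}: for $1\leq \mu\leq \delta^{-1}$,
\[
\int_{|\xi|\sim\mu}\left|\widehat{(f\chi_{E_\delta})\ast\rho_\delta}(\xi)\right|^2 d\xi \;\lesssim_\epsilon\; (\delta\mu)^{d-\alpha-\epsilon}\delta^{-\epsilon}\|f\|_{L^2(E_\delta)}^2,
\]
and the analogue for $g$. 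Since the canonical relation of $T_\lambda$ pairs input frequencies of scale $\sim 2^j$ with output frequencies of scale $\sim 2^j$, the bilinear pairing on each band reduces to a per-scale bound of the form $2^{-j(d-1)/2}(\delta 2^j)^{d-\alpha-\epsilon}\delta^{-\epsilon}\|f\|_{L^2(E_\delta)}\|g\|_{L^2(E_\delta)}$. Summing over $1\leq 2^j\lesssim \delta^{-1}$ yields a geometric series whose endgame parallels the conclusion of Lemma \ref{lemma: unit distance upp L2}: the sum is controlled by the first term when $\alpha\geq (d+1)/2$ and by the endpoint term $2^j\sim \delta^{-1}$ when $\alpha<(d+1)/2$, producing in both cases the desired $\delta^{\beta(d,\alpha)-1-\epsilon}$.

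The main obstacle is the frequency-coupling step above. In the Euclidean proof of Lemma \ref{lemma: unit distance upp L2}, Plancherel translates the convolution $(f\chi_{E_\delta})\ast\chi_{A_{t,\delta}}$ into a clean Fourier-side pairing against $\widehat{\sigma_t}$, and frequency bands of $f$ and $g$ pair automatically. For general $\phi(x,y)$ the operator $T$ has no convolution structure, so the Fourier pairing must be implemented via the canonical relation of $T_\lambda$. This can be achieved either by a wave-packet decomposition of $f,g$ adapted to the canonical relation (allowing one to freeze $x$ on scales $\sim 2^{-j}$, where $\phi$ linearizes and convolution-type analysis applies), or equivalently by a $TT^*$ argument producing pointwise kernel bounds on $T_\lambda^*T_\lambda$ which, integrated against $\chi_{E_\delta}\otimes\chi_{E_\delta}$, extract the fractal gain. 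The latter route is the one carried out in \cite{ITU} for the $k=1$ case, and it is essentially what must be adapted here.
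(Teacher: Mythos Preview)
Your strategy is sound and uses the same ingredients as the paper---the Phong--Stein curvature estimate and the fractal Fourier-mass bound from Lemma~\ref{lemma: unit dist alpha}---but the organization is genuinely different. You decompose the \emph{operator} via Fourier inversion in the $\Psi$-variable, writing $T=\int\widehat{\Psi}(\delta\lambda)e^{-2\pi i\lambda t}T_\lambda\,d\lambda$ and invoking the H\"ormander-type bound $\|T_\lambda\|_{L^2\to L^2}\lesssim(1+|\lambda|)^{-(d-1)/2}$. The paper instead Littlewood--Paley decomposes the \emph{inputs} $F=(f\chi_{E_\delta})\ast\rho_\delta$ and $G=(g\chi_{E_\delta})\ast\rho_\delta$, bounds $\langle TF,G\rangle\leq\sum_{j,k}|\langle T(P_jF),P_kG\rangle|$, and uses the Phong--Stein Sobolev mapping $T:L^2\to L^2_{(d-1)/2}$ (Theorem~\ref{phongsteinth}) directly. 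These are two packagings of the same underlying estimate.

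The crucial difference is in how the frequency-coupling issue you flag is resolved. In the paper's setup, the matched-frequency case $|j-k|\leq K$ is immediate: the Sobolev gain $2^{-k(d-1)/2}$ and the two fractal mass factors $(\delta 2^j)^{(d-\alpha)/2}$, $(\delta 2^k)^{(d-\alpha)/2}$ appear side by side with no further work, and the geometric sum is as you describe. The mismatched case $|j-k|>K$ is then dispatched by the oscillatory-integral representation and a non-stationary-phase bound (Lemma~\ref{IBP} from \cite{ITU}) showing $|I(s,\xi,\zeta)|\lesssim_M\min(|s|^{-M},2^{-jM},2^{-kM})$. In your arrangement, the $L^2\to L^2$ bound on $T_\lambda$ alone gives only $|\lambda|^{-(d-1)/2}\|f\|_{L^2}\|g\|_{L^2}$ with no fractal gain; to extract $(\delta 2^j)^{d-\alpha}$ you must prove that $T_\lambda$ essentially preserves frequency localization at scale $|\lambda|$, which is exactly what you leave as a black box. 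That step is doable (and indeed reduces to the same non-stationary-phase lemma), but until it is written out your proof is incomplete at the point where the paper's is explicit. The paper's input-side decomposition is cleaner here because the Sobolev form of the Phong--Stein estimate already encodes the frequency-scale/decay tradeoff, so the diagonal terms need no further microlocal argument.
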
\vskip.125in

Applying the Lemma $k$-times to the right-hand-side of \eqref{eqn key single PS}, we obtain
$$\delta^{k}  |E_{\delta}|^{1/2} \delta^{k(\beta(d,\alpha)-1)}  |E_{\delta}|^{1/2}  \lesssim \delta^{ k + (d-\alpha) + k(\beta(d,\alpha)-1-\epsilon)  },$$
which agrees with \eqref{eqn: unit distance upp main phi}. 
\vskip.125in

We rely on the following Theorem, due to Phong and Stein \cite{PhSt91}, which is stated here without proof. 
\begin{theorem} \label{phongsteinth} Let $T_{\phi}^{\delta}$ be defined as above with $\phi$ satisfying assumptions (\ref{nondeg}) and (\ref{mongeampere}). Then 
$$ T_{\phi}^{\delta}: L^2({\Bbb R}^d) \to L^2_{\frac{d-1}{2}}({\Bbb R}^d) \ \text{with constants independent of} \ \delta,$$ where $L^2_{\gamma}({\Bbb R}^d)$ denotes the Sobolev space of functions with $\gamma$ (generalized) derivatives in $L^2({\Bbb R}^d)$. 
\end{theorem}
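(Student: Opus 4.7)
My plan is to recognize $T_{\phi}^{\delta}$ as a Fourier integral operator of order $-(d-1)/2$ whose canonical relation is locally a canonical graph, thanks to the non-degeneracy hypothesis \eqref{mongeampere}, and then to derive the Sobolev bound by combining a Littlewood--Paley decomposition in a dual variable with a $TT^*$ estimate. The first step is to expose the oscillatory structure: writing $\Psi(s)=\int \widehat{\Psi}(\tau)e^{2\pi i \tau s}d\tau$ and rescaling $\tau\mapsto \delta\tau$ turns the defining formula \eqref{eqn op} into
\[
T_{\phi}^{\delta}f(x) = \int\int e^{2\pi i \tau(\phi(x,y)-t)} \Psi_0(x,y)\,\widehat{\Psi}(\delta\tau)\, f(y)\, dy\, d\tau.
\]
The rapid decay of $\widehat{\Psi}$ effectively truncates the $\tau$-integral at $|\tau|\lesssim \delta^{-1}$, but I keep the full integral and let the weights $\widehat{\Psi}(\delta\tau)$ carry the $\delta$-uniformity at the end.

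Next I would introduce a dyadic partition $\sum_{j\geq 0}\beta_j(\tau)=1$ with $\beta_j$ supported in $|\tau|\sim 2^j$, and let $T_j$ denote the corresponding piece of $T_{\phi}^{\delta}$. The core analytic claim is the single-scale estimate $\|T_j f\|_{L^2}\lesssim 2^{-j(d-1)/2}\|f\|_{L^2}$. I would prove this via $TT^*$: the Schwartz kernel of $T_j T_j^*$ has the form
\[
K_j(x,x') = \int\int\int e^{2\pi i (\tau\phi(x,y)-\tau'\phi(x',y))} A_j(x,x',y,\tau,\tau')\, dy\, d\tau\, d\tau',
\]
with $A_j$ localized to $|\tau|,|\tau'|\sim 2^j$. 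Integration by parts in $y$, using the gradient hypothesis \eqref{nondeg}, kills the contribution from non-critical $y$, while stationary phase in $y$ with large parameter $2^j$ along the critical set $\{\tau\nabla_y\phi(x,y)=\tau'\nabla_y\phi(x',y)\}$ extracts the gain $2^{j(1-d)}$ together with rapid decay in $2^j|x-x'|$ off the diagonal. Schur's test then yields the claimed bound on $T_j T_j^*$, and almost-orthogonality of the family $\{T_j\}$ follows from the same stationary-phase analysis.

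To assemble the Sobolev estimate I would exploit the fact that $T_j$ is essentially frequency-localized at scale $2^j$ on the output side as well, since the relation $\xi=\tau\nabla_x\phi(x,y)$ coming from the phase, combined with $|\nabla_x\phi|\neq 0$ from \eqref{nondeg}, pins the $x$-frequency of $T_j f$ to $|\xi|\sim 2^j$. Consequently,
\[
\|T_{\phi}^{\delta}f\|_{L^2_{(d-1)/2}}^2 \sim \sum_{j\geq 0} 2^{j(d-1)} \|T_j f\|_{L^2}^2 \lesssim \sum_{j\geq 0} 2^{j(d-1)}\cdot 2^{-j(d-1)} \|f\|_{L^2}^2,
\]
and a Cotlar--Stein argument closes out the summation with constants independent of $\delta$ (the $\widehat{\Psi}(\delta\tau)$ factors only shrink the effective range of $j$, never enlarging any constant).

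The main obstacle will be the stationary-phase step: translating the bordered-determinant hypothesis \eqref{mongeampere} into the Hessian non-degeneracy needed to apply stationary phase to the $y$-integral in $K_j$. The phase $\tau\phi(x,y)-\tau'\phi(x',y)$ degenerates when $x=x'$ and $\tau=\tau'$, and one must verify that the bordered matrix in \eqref{mongeampere} is precisely the matrix whose non-vanishing determinant controls the transverse Hessian along the critical manifold. This is the classical reformulation of the Phong--Stein rotational curvature condition as the requirement that the natural projection from the canonical relation of $T_{\phi}^{\delta}$ to $T^{*}\mathbb{R}^d$ be a local diffeomorphism; once this equivalence is verified, stationary phase applies uniformly on the support of $\Psi_0$ and the rest of the argument proceeds as sketched.
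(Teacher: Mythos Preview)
The paper does not actually prove this theorem: it is quoted verbatim from Phong and Stein \cite{PhSt91} with the explicit disclaimer ``stated here without proof.'' So there is no proof in the paper to compare your proposal against.

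That said, your outline is the correct one and matches the classical Phong--Stein argument in spirit. Rewriting $T_\phi^\delta$ as an oscillatory integral in $(y,\tau)$, dyadically decomposing in $|\tau|\sim 2^j$, and proving the single-scale bound $\|T_j\|_{L^2\to L^2}\lesssim 2^{-j(d-1)/2}$ via $TT^*$ is exactly how the result is obtained. You have also correctly identified the crux: the bordered Monge--Amp\`ere determinant \eqref{mongeampere} is precisely the condition that the canonical relation of the FIO is a local canonical graph, which is what makes the clean $L^2$-Sobolev calculus available. Your observation about output frequency localization (that $T_j f$ lives at $x$-frequency $\sim 2^j$ because $\xi=\tau\nabla_x\phi$ and $|\nabla_x\phi|\neq 0$) is also the right mechanism, and in fact the paper uses exactly this idea later in its own argument (Lemma \ref{IBP}) to handle off-diagonal Littlewood--Paley pieces of $T_\phi^\delta$.

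One place where your sketch is a bit loose is the stationary-phase bookkeeping for $K_j(x,x')$. The phase $\tau\phi(x,y)-\tau'\phi(x',y)$ is a function of $(y,\tau,\tau')$ with $d+2$ variables, and the critical set is not a point but a curve; the clean $2^{-j(d-1)}$ gain comes out only after one either reduces to a single $\tau$ (treating $T_j$ as an average of operators $S_\tau$ with $\|S_\tau\|_{L^2\to L^2}\lesssim |\tau|^{-(d-1)/2}$ and $|\tau|\sim 2^j$), or more carefully parametrizes the critical manifold and applies stationary phase transversally. This is standard but worth flagging, since the Monge--Amp\`ere condition does \emph{not} say that the full mixed Hessian $\partial^2\phi/\partial x\partial y$ is invertible (it generally is not), and the correct non-degeneracy is the bordered one you cite.
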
 
\vskip.125in

\subsection{Proof of Lemma \ref{lemma key single PS}}
For the sake of simplicity, assume $t=1$ (the same argument works for all $t\sim 1$). Let $g$ be a nonnegative test function in $L^2(E_{\delta}) $.  It suffices to show that
$$\langle T(f \chi_{E_{\delta}}), g\chi_{E_{\delta}}  \rangle  \lesssim  \delta^{\beta(d,\alpha)-1 -\epsilon} \|f\|_{  L^2(E_{\delta})  }\cdot  \|g\|_{  L^2(E_{\delta})  }.$$
\\
Let $\rho$ be as in \eqref{rho}
and denote $\rho_r(x)=r^{-d}\rho\left(\frac{x}{r}  \right)$.
Since 
$f\chi_{E_{\delta}}  (x) \lesssim (f\chi_{E_{c\delta}})* \rho_{c\delta} (x)$, for $f$ non-negative and continuous, where $c>0$ is an absolute constant, 
we can bound the left-hand-side of this expression by
\begin{equation}\label{innerprod}
\langle T(  (f \chi_{E_{c\delta}})*\rho_{c\delta}   ), (g \chi_{E_{c\delta}} )*\rho_{c\delta}    \rangle.
\end{equation}  
\vskip.125in

For ease of notation, we write this as 
$$\langle  TF, G\rangle,$$
where we set
$ F: = F_{\delta}= (f \chi_{E_{\delta}})*\rho_{\delta} $,
$G: = G_{\delta}= (g\chi_{E_{\delta}})*\rho_{\delta},$ and dropped the subscript $c$.

Let $\eta_0(\xi)$ and $\eta$ be smooth cut-off functions such that $\eta_0$ is supported in the ball $\{|\xi|<4\}$, $\eta$ is supported in the annulus $\{1/2\le |\xi| \le 4\}$, and $\eta_0(\xi) + \sum_j \eta(2^{-j} \xi) \equiv 1$. Set $\eta_j(\cdot) = \eta(2^{-j} \cdot)$.  
For $f \in L^2(dx)$, define $\widehat{P_jf}$, 
the classical Littlewood-Paley projection (see, for instance, \cite{St93} pages 241-243), by the relation 
$$ \widehat{P_jf}=\widehat{f} \cdot \eta(2^{-j} \cdot).$$

Let $P_j$ and $P_k$ be Littlewood-Paley operators. Now 
$$   \langle TF, G\rangle 
\le   \sum_{k,j =0}^{\infty} \left| \langle T(  P_j F ) , P_k G \rangle\right|.$$

Applying Parseval's identity, 
$$\sim     \sum_{k,j =0}^{\infty}  
\left|\langle  
(T(  P_j F )   )^{\wedge}, 
(P_k G)^{\wedge}
\rangle\right|.$$

Since $\eta_k \sim (\eta_k)^2$, we can write
$$\sim \sum_{k,j =0}^{\infty}  \left|\langle   
(  P_k \left(  T(  P_j F\right)  ))^{\wedge},
(P_k G )^{\wedge}
 \rangle\right|.$$

 From the second term in this inner product, we see that the sum in $k$ is restricted to $2^k \le C\frac{1}{\delta}$.  
 Indeed, recalling that 
 $\widehat{ P_k    G  } (\xi)  =
\left( P_k \left[ (g \chi_{E_{\delta}} )*\rho_{\delta}   \right]\right)^\wedge (\xi) = \eta(2^{-k} \xi) (g \chi_{E_{\delta}} )^\wedge (\xi)    \widehat{\rho}(\delta \xi)$, it follows that $2^k \lesssim \frac{1}{\delta}$.  We will see below that we may also restrict to summing over $j$ so that $2^j \lesssim \frac{1}{\delta}$.

 Applying Cauchy-Schwarz, we have
$$
\langle  TF, G\rangle \,\,   \lesssim \,\,\sum_{j =0}^{\infty}   \sum_{ 2^k \lesssim \frac{1}{\delta} }
\| ( P_k \left(  T(  P_j  F\right)  )^{\wedge}\|_{L^2(E_{\delta})}
\cdot \|  ( P_k   G )^{\wedge}  \|_{L^2(E_{\delta})}.  
$$

The second term was handled in the proof of Theorem \ref{main3}, and it can be easily deduced from 
Lemma \ref{lemma: unit dist alpha} that it is bounded by 
$\left(  2^k \delta \right)^{\frac{(d-\alpha)}{2}}  \|g \|_{L^2(E_{\delta})}.$ 
Now
 \begin{equation}\label{sum}
\langle  TF, G\rangle \,\,   \lesssim \,\,\sum_{j =0}^{\infty}   \sum_{ 2^k \lesssim \frac{1}{\delta} }
\|  ( P_k (  T(  P_j  F)  ))^{\wedge}\|_{L^2(E_{\delta})}
\cdot\left(  2^k \delta \right)^{\frac{(d-\alpha)}{2}}  \|g \|_{L^2(E_{\delta})}.  
\end{equation}

The remainder of this section is dedicated to bounding the first term in the summand. For
$K\in \mathbb{N}$, to be determined, we handle the case when $|j-k|\le K$ and $|j-k|>K$ separately.

\vskip.125in

 \noindent
\textbf{Case 1: $|j-k|\le K$: }
We first bound
  \begin{equation}\label{revisedsum}
\sum_{\substack{j,k:\, 2^k \lesssim \frac{1}{\delta} \\ j\geq 0,\, |j-k|\leq K}}
\|  ( P_k (  T(  P_j  F)  ))^{\wedge}\|_{L^2(E_{\delta})}
\cdot\left(  2^k \delta \right)^{\frac{(d-\alpha)}{2}}  \|g \|_{L^2(E_{\delta})} 
\end{equation}
Write
\begin{equation}
\int  \left|  ( P_k(  T(  P_j F  )  ))^{\wedge}(\xi)    \right|^2 \,d\xi   
\sim 2^{-k(d-1)}
\int_{|\xi| \sim 2^k}  \left|  (T(  P_j F  ))^{\wedge}(\xi)    \right|^2 
|\xi|^{d-1}  \, d\xi.   
\end{equation}

Applying Theorem \ref{phongsteinth}, we can bound the above by
$$\lesssim  2^{-k(d-1)}
\int  \left|  ( P_j F  )^{\wedge}(\xi)    \right|^2 
\, d\xi    $$

 $$\sim  2^{-k(d-1)}
 \int_{|\xi| \sim 2^j< \frac{2C'}{\delta}} 
  \left|  ( (f \chi_{E_{\delta}})*\rho_{\delta}      )^{\wedge}(\xi)    \right|^2 
 \,d\xi.    $$
 
 Now
 \begin{equation}\label{secondterm}
 \| ( P_k (  T(  P_j F )))^\wedge\|_{L^2(E_{\delta})}
 \lesssim  2^{-k\frac{(d-1)}{2}} 
 2^{j\frac{(d-\alpha)}{2}}\delta^{\frac{(d-\alpha)}{2}}  \|f \|_{L^2(E_{\delta})}.
 \end{equation}

 Plugging this expression into \eqref{revisedsum} and rearranging terms yields

  \begin{equation}\label{boundedrevisedsum}
\|f \|_{L^2(E_{\delta})}   \|g \|_{L^2(E_{\delta})} \delta^{(d-\alpha)} 
\sum_{\substack{j,k:\, 2^k \lesssim \frac{1}{\delta} \\ j\geq 0,\, |j-k|\leq K}}
2^{-k\frac{(d-1)}{2}} 
 2^{k\frac{(d-\alpha)}{2}}
   2^{j\frac{(d-\alpha)}{2}}.   
\end{equation}

When $\alpha>\frac{d+1}{2}$, we bound the sum in \eqref{boundedrevisedsum} (up to a constant dependent on $K$) by 
$$ \sum_{\substack{j:\, 2^j \lesssim \frac{1}{\delta} \\ j\geq 0}}
2^{-j\frac{(d-1)}{2}} 
 2^{j\frac{(d-\alpha)}{2}}
   2^{j\frac{(d-\alpha)}{2}} \lesssim
   1,$$
   and when $\alpha \le \frac{d+1}{2}$,  we have
$$   \sum_{\substack{j:\, 2^j \lesssim \frac{1}{\delta} \\ j\geq 0\, }}
2^{-j\frac{(d-1)}{2}} 
 2^{j\frac{(d-\alpha)}{2}}
   2^{j\frac{(d-\alpha)}{2}} \lesssim \log\left( \frac{1}{\delta}\right)   \left(  \frac{1}{\delta}  \right)^{d-\alpha - \frac{d-1}{2}}.$$

We conclude that the left-hand side of \eqref{revisedsum} is bounded by $ \delta^{\beta(d,\alpha)-1- \epsilon} \|f\|_{  L^2(E_{\delta})  }\cdot  \|g\|_{  L^2(E_{\delta})  }$, as desired. \\ 

%

\noindent
\textbf{Case 2: $|j-k|> K$: }
In the case that $|j-k|>K$, we will expand the Fourier transform $(T(  P_j \left[  (f \chi_{E_{\delta}})*\rho_{\delta}   \right] )  )^{\wedge}(\xi) $ and examine the critical points of the phase function.  We follow the proof presented in \cite{ITU}, with the only major change being that we utilize our assumption that $E$ is $\{\delta_i\}$-discrete $\alpha$-regular.
Using Fourier inversion, we can write 
$$T(  P_j \left[  (f \chi_{E_{\delta}}   )*\rho_{\delta}    \right])(x)  = 
\frac{1}{\delta} \int   \Psi  \left(  \frac{  \phi(x,y) - 1}{\delta}  \right)   P_j ( f\chi_{E_{\delta}}* \rho_{\delta})(y)  \, \Psi_0(x,y)\,dy  $$

$$=  \int   e^{  2\pi i [( \phi(x,y) -1  )s + \zeta y ]}  \, 
\widehat{  \Psi}( \delta s) \, ( P_j ( f\chi_{E_{\delta}}* \rho_{\delta}) )^{\wedge}  (\zeta) \Psi_0(x,y) \,   ds d\zeta  dy.$$

Now
\[
\begin{split}
&(T(  P_j \left[  (f \chi_{E_{\delta}})*\rho_{\delta}    \right]  ) )^{\wedge}(\xi)\\
\sim  &\int    e^{  2\pi i [( \phi(x,y) -1  )s + \zeta y - \xi x]}   \widehat{\Psi}(\delta s) 
( P_j ( f\chi_{E_{\delta}}* \rho_{\delta}) )^{\wedge}  (\zeta)   \Psi_0(x,y) \,   ds d\zeta dy dx.
\end{split}
\]

 Multiplying both sides by $\eta(2^{-k}\cdot)$, we see that 
\[
\begin{split}
&\eta(2^{-k}\xi) \cdot
\,(  T(  P_j \left[  (f \chi_{E_{\delta}})*\rho_{\delta}    \right] )  )^{\wedge}(\xi)  \\
\sim &\eta(2^{-k}\xi) 
 \int I(s, \xi, \zeta) \,   \widehat{\Psi}(\delta s)   
( P_j ( f\chi_{E_{\delta}}* \rho_{\delta}) )^{\wedge}  (\zeta)  \,ds d\zeta,  
\end{split}
\]where $I(s, \xi, \zeta)  : = \int e^{  2\pi i [( \phi(x,y) -1  )s + \zeta y - \xi x]} \,  \Psi_0(x,y)\,dx dy $.

We note that $j$ is restricted to $2^j \le C\frac{1}{\delta}$.  
 Indeed, recalling that 
 \[
 (P_j \left[ (g \chi_{E_{\delta}} )*\rho_{\delta}   \right] )^{\wedge} (\zeta) = \eta(2^{-j} \zeta) (g \chi_{E_{\delta}}  )^{\wedge}(\zeta)    \widehat{\rho}(\delta \zeta),
 \]it follows that $2^j \lesssim \frac{1}{\delta}$.

We use the following Lemma, which appears in \cite{ITU} (see Lemma 2.5), to bound $| I(s, \xi, \zeta)|$. 

\begin{lemma}\label{IBP} 
Suppose that $|\zeta|\sim 2^j$ and $|\xi| \sim 2^k$.  
Then there exists a $K>0$ so that if $|j-k|>K$, then for each positive integer $M$, there exists a positive constant $c_M>0$ so that 
$$|I(s,\xi,\zeta)|  \le c_M\, \inf \left\{|s|^{-M}, 2^{-j\,M}, 2^{-k\,M} \right\}. $$ 
\end{lemma}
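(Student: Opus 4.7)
The plan is to prove the bound by a standard nonstationary phase argument, integrating by parts in whichever of $x$ or $y$ yields a large gradient. Writing the phase as $\Phi(x,y) = s[\phi(x,y)-1] + \zeta\cdot y - \xi\cdot x$, one has $\nabla_y\Phi = s\,\nabla_y\phi + \zeta$ and $\nabla_x\Phi = s\,\nabla_x\phi - \xi$. By the non-vanishing hypothesis \eqref{nondeg} together with the compact support of $\Psi_0$, there exist constants $0<a\le b<\infty$ with $a \le |\nabla_x\phi|,|\nabla_y\phi|\le b$ throughout the support. Observe that $\inf\{|s|^{-M},2^{-jM},2^{-kM}\} = \max\{|s|,2^j,2^k\}^{-M}$, and by the symmetry $(x,\xi)\leftrightarrow(y,\zeta)$ I may assume without loss of generality that $j\ge k$.

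The partition of the $s$-axis into three (possibly overlapping) regions runs as follows. Region (I) is $|s|\le |\zeta|/(2b)$, where the triangle inequality gives $|\nabla_y\Phi|\ge |\zeta|/2\sim 2^j$; since derivatives of $\Phi$ in $y$ are bounded by a multiple of $1+|s|\lesssim 2^j$, iterating the IBP operator $L_y^* g := -\nabla_y\cdot\!\left(\frac{\nabla_y\Phi}{2\pi i|\nabla_y\Phi|^2}\, g\right)$ a total of $M$ times yields $|I|\lesssim_M 2^{-jM}$. Region (II) is $|s|\ge 2|\zeta|/a$, where $|\nabla_y\Phi|\ge a|s|/2\sim |s|$ and the same operator yields $|I|\lesssim_M|s|^{-M}$. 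Region (III) is $|s|\ge 2|\xi|/a$, where $|\nabla_x\Phi|\ge a|s|/2\sim |s|$ and the analogous $L_x^*$ yields $|I|\lesssim_M|s|^{-M}$. Matching these against $\max\{|s|,2^j,2^k\}^{-M}$ is automatic: in Region (I) one has $|s|\lesssim 2^j$, so the max equals $2^j$; in Regions (II) and (III) one has $|s|\gtrsim 2^j\ge 2^k$, so the max equals $|s|$.

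The crux of the argument, and the only place where $|j-k|>K$ is actually used, is in verifying that Regions (I)--(III) cover all of $\mathbb{R}$ for $K$ sufficiently large. The only possible gap is $|\zeta|/(2b) < |s| < \min\{2|\zeta|/a,\,2|\xi|/a\}$; since $j\ge k+1$ one has $|\xi|<|\zeta|$, so the minimum equals $2|\xi|/a$, and the gap is empty precisely when $|\zeta|/|\xi|\gtrsim 4b/a$. Because $|\zeta|/|\xi|\sim 2^{j-k}$, it suffices to choose $K$ so that $2^K$ dominates $4b/a$ (times the implicit constants relating $|\zeta|$ to $2^j$ and $|\xi|$ to $2^k$). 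The integrations by parts themselves are routine, since $\phi\in C^\infty$, $\Psi_0\in C_c^\infty$, and each application of $L_y^*$ or $L_x^*$ produces at worst a factor of $|\nabla\Phi|^{-1}$ times bounded derivatives of $\Phi$ and $\Psi_0$, with no boundary terms. I expect all the difficulty of the proof to be concentrated in this covering check; the IBP bookkeeping is entirely standard.
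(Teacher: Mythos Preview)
Your proof is correct and follows the standard nonstationary phase argument. The paper does not give its own proof of this lemma; it simply cites \cite{ITU}, Lemma~2.5, so there is no ``paper's proof'' to compare against beyond that reference. Your approach---partitioning the $s$-axis according to whether $|s|$ is small relative to the larger frequency or large relative to the smaller one, and integrating by parts in the variable whose gradient is then guaranteed to be large---is exactly the expected argument, and the covering check you isolate (that the gap $|\zeta|/(2b)<|s|<2|\xi|/a$ closes once $2^{|j-k|}\gtrsim 4b/a$) is indeed where the separation hypothesis $|j-k|>K$ enters.

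One minor remark: the ``symmetry $(x,\xi)\leftrightarrow(y,\zeta)$'' you invoke is not a literal symmetry of the integral (since $\phi(x,y)$ need not equal $\phi(y,x)$), but the argument only uses the two-sided bounds $a\le|\nabla_x\phi|,|\nabla_y\phi|\le b$ from \eqref{nondeg}, which hold symmetrically, so the reduction to $j\ge k$ is legitimate. The Monge--Amp\`ere condition \eqref{mongeampere} plays no role in this lemma.
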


With Lemma \ref{IBP} in tow, we return to the estimate above.  Plugging in the estimate from the lemma and integrating in $s$, we have 
\[
\begin{split}
&\left|   \eta(2^{-k}\xi) \cdot \,
( T(  P_j \left[  (f \chi_{E_{\delta}})*\rho_{\delta}    \right]  ) )^{\wedge}(\xi)  \right|\\
\lesssim &
 \eta(2^{-k}\xi)   \min \left\{ 2^{-j\,(M-1)}, 2^{-k\,(M-1)} \right\}  
  \int_{|\zeta| \sim 2^j}   
 | ( f\chi_{E_{\delta}}* \rho_{\delta})^\wedge (\zeta)|   \,d\zeta.  
 \end{split}
 \]
 
 Finally, applying Cauchy-Schwarz, we can bound this expression above by
 \[
 \begin{split}
 \lesssim &
 \eta(2^{-k}\xi)    \min\left\{ 2^{-j\,(M-1)}, 2^{-k\,(M-1)} \right\}  
 2^{\frac{jd}{2}}\,
\left(  \int_{|\zeta| \sim 2^j}   
 |  ( f\chi_{E_{\delta}}* \rho_{\delta})^{\wedge}  (\zeta)|^2   \,d\zeta \right)^{1/2}\\
 \lesssim &
 \eta(2^{-k}\xi)   \min \left\{ 2^{-j\,(M-1)}, 2^{-k\,(M-1)} \right\}  
 2^{\frac{jd}{2}}\,
\left(  2^j \delta \right)^{\frac{(d-\alpha)}{2}}  \| f\|_{L^2(E_{\delta})}.
\end{split}
\]

It follows that 
$$\|  ( P_k \left(  T(  P_j F \right) ) )^{\wedge}\|_{L^2(E_{\delta})}
\lesssim 
 2^{\frac{kd}{2}}\,
 \min  \left\{ 2^{-j\,(M-1)}, 2^{-k\,(M-1)} \right\}  
 2^{\frac{jd}{2}}\,
\left(  2^j \delta \right)^{\frac{(d-\alpha)}{2}}  \| f\|_{L^2(E_{\delta})},
$$
and since $M$ can be taken arbitrarily large, the result follows.

\appendix

\bigskip
\section{Higher dimensional analogue of a Fubini-like theorem}\label{sec: Fubini}

Although a direct substitute of Fubini's theorem in a fractal setting is not available, the following theorem acts as a sort of substitute; it can be found in \cite{Falc86} (see page 72, Theorem 5.8).  

\begin{theorem}
Let  $A$ be any subset of the $x$-axis and let $B$ be a subset of the plane.  For $x\in \mathbb{R}$, let $B_x$ denote the linear set $\{y: (x,y) \in B\}$.  
Suppose that there exists a constant $c$ so that, for each $x\in A$, it holds that $\mathcal{H}^t(B_x) \geq c$.  
Then 
$$\mathcal{H}^{s+t}(B) \geq bc \mathcal{H}^s(A),$$
where $b$ depends only on $s$ and $t$.  
\end{theorem}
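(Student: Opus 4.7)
The plan is to reduce the lower bound for $\mathcal{H}^{s+t}(B)$ to a direct covering computation by combining a Frostman measure on $A$ with the obvious fact that any cover of $B$ simultaneously slices to a cover of every fiber $B_x$. This avoids the measurable-selection issue that arises if one tries to construct a fiber-product measure on $B$ directly. Without loss of generality assume $0 < \mathcal{H}^s(A) < \infty$, and pass to a compact subset $A_0 \subseteq A$ with $\mathcal{H}^s(A_0)$ comparable to $\mathcal{H}^s(A)$. Frostman's lemma then produces a non-zero Radon measure $\mu$ supported on $A_0$ satisfying
$$\mu(V) \leq (\mathrm{diam}\, V)^s \quad \text{for every Borel } V \subseteq \mathbb{R}, \qquad \mu(A_0) \geq b_1\, \mathcal{H}^s(A),$$
for some $b_1 = b_1(s) > 0$.

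Fix $\delta > 0$ and an arbitrary $\delta$-cover $\{U_i\}$ of $B$. The projection $\pi(U_i)$ onto the $x$-axis and every vertical fiber $(U_i)_x := \{y : (x,y) \in U_i\}$ have diameter at most $\mathrm{diam}\, U_i$, and for each $x \in A_0$ the collection $\{(U_i)_x : x \in \pi(U_i)\}$ is a $\delta$-cover of $B_x$. Hence
$$\mathcal{H}^t_\delta(B_x) \leq \sum_{i:\, x \in \pi(U_i)} (\mathrm{diam}\, U_i)^t.$$
Integrating against $\mu$ and using the Frostman bound $\mu(\pi(U_i)) \leq (\mathrm{diam}\, U_i)^s$ yields
$$\int \mathcal{H}^t_\delta(B_x)\, d\mu(x) \leq \sum_i (\mathrm{diam}\, U_i)^t\, \mu(\pi(U_i)) \leq \sum_i (\mathrm{diam}\, U_i)^{s+t}.$$
Taking the infimum over all $\delta$-covers of $B$, the right-hand side becomes $\mathcal{H}^{s+t}_\delta(B)$.

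Finally, let $\delta \searrow 0$. The premeasures $\mathcal{H}^t_\delta(B_x)$ are monotone non-decreasing and converge to $\mathcal{H}^t(B_x) \geq c$ on $A$, so monotone convergence gives
$$\mathcal{H}^{s+t}(B) \geq \int \mathcal{H}^t(B_x)\, d\mu(x) \geq c\, \mu(A_0) \geq b_1 c\, \mathcal{H}^s(A),$$
which is the claimed inequality with $b = b_1$. The main obstacle to making this proof rigorous is the measurability of $x \mapsto \mathcal{H}^t_\delta(B_x)$: when $B$ is Borel (or analytic) this follows from a standard measurable-projection argument, and for general $B$ one passes to a Borel hull with the same $\mathcal{H}^{s+t}$-measure. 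A natural alternative approach---constructing a fiber-product measure $\nu$ on $B$ from $\mu$ and Frostman slice-measures $\nu_x$ on each $B_x$ and invoking the mass distribution principle---would require a measurable selection for the family $\{\nu_x\}_{x \in A}$, which the covering argument above bypasses entirely.
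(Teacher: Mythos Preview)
Your argument is correct (modulo the regularity caveats you flag at the end), but it proceeds along a genuinely different line from the paper, which follows Falconer's original proof in \cite{Falc86}. The paper never invokes Frostman. Instead it works with the dyadic net measure $\mathcal{M}^s$ and relies on Falconer's Lemma~5.7 (restated in the paper as Lemma~\ref{lemma5.7gen}): if $\{I_i\}$ is a binary $\delta$-cover of $A$ and weights $a_i>0$ satisfy $\sum_{i:\,x\in I_i}a_i>c$ for every $x\in A$, then $\sum_i a_i|I_i|^s\ge c\,\mathcal{M}^s_\delta(A)$. Taking a binary-cube cover $\{C_i\}$ of $B$ and setting $I_i=\pi(C_i)$, $a_i=|C_i|^t$, the lemma gives $\sum_i|C_i|^{s+t}\gtrsim c\,\mathcal{M}^s_\delta(A_\delta)$ for $A_\delta=\{x\in A:\mathcal{H}^t_\delta(B_x)\ge c\}$; since $A_\delta\nearrow A$, one lets $\delta\to 0$.

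Both arguments share the same core observation---a cover of $B$ slices to a cover of every fiber---but you replace the combinatorial Lemma~5.7 by integration against a Frostman measure. The paper's route is more elementary in that it avoids measure theory entirely: no measurability of $x\mapsto\mathcal{H}^t_\delta(B_x)$ is needed (there is no integral), and the lemma holds for \emph{arbitrary} $A$, so the reduction to compact or Borel sets is unnecessary. Your route is more conceptual and generalizes immediately whenever one already has an $s$-Frostman measure on $A$ in hand; note, however, that the quantitative conclusion $\mu(A_0)\ge b_1\,\mathcal{H}^s(A)$ is stronger than the usual Frostman lemma (whose lower bound is in terms of the Hausdorff \emph{content} $\mathcal{H}^s_\infty(A)$) and really rests on the upper-density theorem for sets of finite $\mathcal{H}^s$-measure---worth stating explicitly.
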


In this paper, we require a higher-dimensional analogue of this theorem,  which to the best of our knowledge does not seem to appear in the literature. For this reason, we include the statement and outline the proof for completeness. 
The utility of this theorem is demonstrated by Theorem \ref{main2}, \ref{main2.5}, and Proposition \ref{glue}.

\begin{theorem}\label{higherFubgen}
Let $d\geq 2$ and $1\le k\le (d-1) $.  
Let  $A$ be any Borel subset of $\mathbb{R}^{d-k}$ and let $B$ be a Borel subset of $ \mathbb{R}^d$. 
For $x\in \mathbb{R}^{d-k}$, set 
$$B_x: =  \{(y_1, \dots, y_{k}): (x, y_1, \dots, y_k) \in B\}.$$
Suppose that there exists some constant $c>0$ so that, for each $x\in A$, it holds that $\mathcal{H}^t(B_x)  \geq c$. 
Then 
$$\mathcal{H}^{s+t}(B) \geq bc \mathcal{H}^s(A),$$
where $b$ depends only on $k$, $s$, $t$, and the ambient dimension $d$.  
\end{theorem}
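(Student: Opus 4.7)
The plan is to generalize the two-dimensional argument from Falconer essentially verbatim, the point being that the orthogonal projection $\pi\colon \mathbb{R}^d\to\mathbb{R}^{d-k}$ onto the first $d-k$ coordinates is $1$-Lipschitz, and hence does not increase diameters: for any $U\subset\mathbb{R}^d$ both $\pi(U)$ and the slices $U_x:=\{y\in\mathbb{R}^k:(x,y)\in U\}$ have diameter bounded by $|U|$. The only genuinely higher-dimensional ingredient is Frostman's Lemma in $\mathbb{R}^{d-k}$; everything else is a covering-plus-integration argument.

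First, assuming without loss of generality that $0<\mathcal{H}^s(A)<\infty$ (after inner regularity and passing to a subset), I would invoke Frostman's Lemma to produce a Borel measure $\mu$ supported on $A$ with $\mu(A)\gtrsim_s \mathcal{H}^s(A)$ and $\mu(B(x,r))\leq C r^s$ for every $x\in\mathbb{R}^{d-k}$ and $r>0$, where $C$ depends only on $s$ and the ambient dimension $d-k$. Next, for any $\delta$-cover $\{U_i\}$ of $B$, and for each $x\in\mathbb{R}^{d-k}$, the sets $\{(U_i)_x:i\in I(x)\}$ with $I(x):=\{i:x\in\pi(U_i)\}$ form a $\delta$-cover of $B_x$, which yields the pointwise bound
\[
\mathcal{H}^t_\delta(B_x)\leq \sum_{i\in I(x)} |U_i|^t=\sum_i |U_i|^t\,\chi_{\pi(U_i)}(x).
\]
Integrating against $\mu$ over $A$ and using the Frostman bound $\mu(\pi(U_i))\leq C|U_i|^s$ (valid because $\pi(U_i)$ is contained in a ball of radius $|U_i|$ in $\mathbb{R}^{d-k}$), one obtains $\int_A \mathcal{H}^t_\delta(B_x)\,d\mu(x)\leq C\sum_i|U_i|^{s+t}$. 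Taking the infimum over all $\delta$-covers gives $\mathcal{H}^{s+t}_\delta(B)\geq C^{-1}\int_A \mathcal{H}^t_\delta(B_x)\,d\mu(x)$, and letting $\delta\to 0$ by monotone convergence (since $\mathcal{H}^t_\delta(B_x)$ is nondecreasing as $\delta\to 0$) produces
\[
\mathcal{H}^{s+t}(B)\ \geq\ C^{-1}\int_A \mathcal{H}^t(B_x)\,d\mu(x)\ \geq\ C^{-1}\,c\,\mu(A)\ \gtrsim\ c\,\mathcal{H}^s(A),
\]
as desired, with all dependence on $k,s,t,d$ absorbed into the Frostman constant $b$.

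The main subtlety — not really a serious obstacle but worth flagging — is the measurability of the function $x\mapsto \mathcal{H}^t_\delta(B_x)$ needed to make sense of the integral against $\mu$. For Borel $B$ this can be handled via standard measurable-selection type arguments; alternatively, one may first reduce to compact $B$ via inner regularity of Hausdorff measure restricted to a $\sigma$-finite subset, or simply run the whole argument using upper Lebesgue integrals so that measurability never has to be checked explicitly. The remainder of the proof is entirely parallel to the planar case, and the constant $b$ produced depends only on $k$, $s$, $t$, and $d$, as claimed.
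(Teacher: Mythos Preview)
Your integration-against-a-Frostman-measure approach is a genuinely different route from the paper's: the paper works with net measures $\mathcal{M}^s$ built from dyadic cubes and invokes a combinatorial lemma (Lemma~\ref{lemma5.7gen}, the higher-dimensional version of Falconer's Lemma~5.7) that exploits the nested structure of binary cubes to pass directly from the cover inequality $\sum_{i:x\in\pi(C_i)}|C_i|^t\geq c$ to $\sum_i|C_i|^{s+t}\gtrsim c\,\mathcal{M}^s_\delta(A_\delta)$. No Frostman measure is ever introduced.

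There is, however, a real gap in your argument. Frostman's lemma does \emph{not} produce a measure with $\mu(A)\gtrsim_s\mathcal{H}^s(A)$; it only gives $\mu(A)\gtrsim\mathcal{H}^s_\infty(A)$, the Hausdorff \emph{content}. The ratio $\mathcal{H}^s(A)/\mathcal{H}^s_\infty(A)$ can be arbitrarily large even when $0<\mathcal{H}^s(A)<\infty$ (take many well-separated translates of an $s$-set inside a single unit interval). So as written your argument yields only $\mathcal{H}^{s+t}(B)\gtrsim c\,\mathcal{H}^s_\infty(A)$, which suffices for the dimension corollary but not for the stated Hausdorff-measure inequality with a universal constant $b$.

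Your approach can be repaired: take $\mu=\mathcal{H}^s\lfloor A$ directly and use the upper density theorem for Hausdorff measure (e.g.\ Mattila, Theorem~6.2), which gives $\limsup_{r\to 0}\mathcal{H}^s(A\cap B(x,r))/(2r)^s\leq 1$ for $\mathcal{H}^s$-a.e.\ $x\in A$. Exhausting $A$ by sets $A_j$ on which this bound holds for all $r\leq 1/j$, your integration step goes through for $\delta\leq 1/(2j)$ with constant $2^{s+1}$, and letting $\delta\to 0$ then $j\to\infty$ recovers the full bound. The paper's net-measure argument avoids this detour entirely, which is its main advantage; your approach has the merit of being more transparently measure-theoretic and of handling the measurability issue you flagged via an explicit integral.
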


The following is an immediate corollary.  

\begin{corollary}\label{cor: Fubini}
Let  $A$, $B$, and $B_x$ as in Theorem \ref{higherFubgen}.
 Suppose that there exists $t\geq 0$ so that, for each $x\in A$, it holds that $\dim_{\mathcal{H}}(B_x) \geq t$.  
Then 
$$\dim_{\mathcal{H}}(B) \geq  t+ \dim_{\mathcal{H}}(A) .$$
\end{corollary}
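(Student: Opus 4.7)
The plan is to derive Corollary \ref{cor: Fubini} directly from Theorem \ref{higherFubgen} by a standard exhaustion argument, which converts a pointwise dimensional lower bound on slices into the uniform positive lower bound on a Hausdorff measure of slices that Theorem \ref{higherFubgen} requires. Fix arbitrary $s < \dim_{\mathcal{H}}(A)$ and $t' < t$; it will suffice to show $\dim_{\mathcal{H}}(B) \geq s + t'$, since then letting $s \uparrow \dim_{\mathcal{H}}(A)$ and $t' \uparrow t$ yields the claim.

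First, for each $n \in \mathbb{N}$, I would define
\[
A_n := \{x \in A : \mathcal{H}^{t'}(B_x) \geq 1/n\}.
\]
Because $\dim_{\mathcal{H}}(B_x) \geq t > t'$ for every $x \in A$, we have $\mathcal{H}^{t'}(B_x) = \infty$ at each such $x$, hence $A = \bigcup_{n} A_n$. Using countable subadditivity of $\mathcal{H}^s$ together with the fact that $\mathcal{H}^s(A) = \infty$ (since $s < \dim_{\mathcal{H}}(A)$), some $A_{n_0}$ must satisfy $\mathcal{H}^s(A_{n_0}) > 0$. By inner regularity, one may further pass to a Borel (in fact compact) subset $A'\subset A_{n_0}$ with $\mathcal{H}^s(A') > 0$ on which the slice bound $\mathcal{H}^{t'}(B_x) \geq 1/n_0$ still holds for every $x \in A'$.

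Now apply Theorem \ref{higherFubgen} with parameters $s, t'$ and sets $A', B$ (which satisfy precisely its uniform hypothesis with $c = 1/n_0$) to conclude
\[
\mathcal{H}^{s+t'}(B) \;\geq\; b \cdot \frac{1}{n_0} \cdot \mathcal{H}^s(A') \;>\; 0,
\]
so $\dim_{\mathcal{H}}(B) \geq s + t'$, as desired.

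The main (and essentially the only) obstacle is the measurability of the sets $A_n$: one needs to know that the map $x \mapsto \mathcal{H}^{t'}(B_x)$ is sufficiently measurable on $\mathbb{R}^{d-k}$ to guarantee that each $A_n$ is Borel (or at least Hausdorff-measurable) so that the exhaustion argument and the subsequent extraction of a compact subset $A'$ with $\mathcal{H}^s(A')>0$ are legitimate. Since $B$ is Borel, this measurability follows from standard results on Hausdorff measures of sections of Borel sets (see, e.g., \cite{Falc86}), so no substantial new work is required. Once this point is handled, the remainder is a bookkeeping argument reducing to Theorem \ref{higherFubgen}.
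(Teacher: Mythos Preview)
Your argument is correct, but the exhaustion into the sets $A_n$ and the associated measurability discussion are superfluous. You yourself note that $t'<t\leq \dim_{\mathcal{H}}(B_x)$ forces $\mathcal{H}^{t'}(B_x)=\infty$ for \emph{every} $x\in A$; hence each $A_n$ already equals $A$, and there is nothing to exhaust. One may therefore apply Theorem \ref{higherFubgen} directly to the pair $(A,B)$ with parameters $(s,t')$ and $c=1$, obtaining $\mathcal{H}^{s+t'}(B)\geq b\,\mathcal{H}^s(A)=\infty$ (since $s<\dim_{\mathcal{H}}(A)$), and thus $\dim_{\mathcal{H}}(B)\geq s+t'$. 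This is why the paper simply declares the result an ``immediate corollary'' and gives no separate proof; the measurability of $x\mapsto \mathcal{H}^{t'}(B_x)$ never enters.
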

 
Rather than working with Hausdorff measures directly, we work with a comparable measure that is defined by coverings of a set by binary intervals.  A binary half-open cube is a set of the form
$$[2^{-k} m_1, 2^{-k}(m_1 + 1) ) \times [2^{-k} m_2, 2^{-k}(m_2 + 1) ) \times \cdots \times [2^{-k} m_d, 2^{-k}(m_d + 1) ),$$
where $m_1, \dots, m_d$ are integers and $k$ is a non-negative integer.  
Define an outer measure on $\mathbb{R}^d$ by 
\begin{equation}\label{binarymeasure}
\mathcal{M}^s(F) = \lim_{\delta \rightarrow 0} \mathcal{M}^s_{\delta}(F),
\end{equation}
where 
\begin{equation}\label{apxbinarymeasure}
\mathcal{M}^s_{\delta}(F)  = \inf \sum_{i=1}^{\infty} |C_i|^s,
\end{equation} where the infimum is taken over all countable $\delta$-coverings of $F$ by half-open binary cubes $\{C_i\}$ with length denoted by $|C_i|$.  

Observe that, for each Borel set $F$, the measure $\mathcal{M}^s(F)$ is comparable to $\mathcal{H}^s(F)$. Indeed, since a covering of a set $F$ by binary intervals is an admissible cover in the definition of Hausdorff measures, it follows that 
$\mathcal{H}^s_{\delta}(F) \le \mathcal{M}^s_{\delta}(F)$.  

Conversely, since any axes parallel box has the property that each side (call such a side $I$), is contained in the union of two consecutive half-open binary intervals of length at most $2|I|$, we have 
$2^{s+1} \mathcal{H}^s_{\delta}(F) \geq \mathcal{M}^s_{\delta}(F)$.

The following Lemma is a higher dimensional analogue of Lemma 5.7 in \cite{Falc86}.  It is a straightforward exercise to verify that the proof there can be recycled to prove Lemma \ref{lemma5.7gen}. 

\begin{lemma}\label{lemma5.7gen}
Let $d\geq 1$ and $0\le s \le d$. 
Let $A$ be any subset of $\mathbb{R}^d$, let $\{ I_i\}$ be a countable $\delta$-cover of $A$ by binary cubes, and let $\{a_i \}$ be a sequence of positive numbers.  Suppose $c$ is a constant such that 
$$\sum_{i:\, x\in I_i}  a_i >c$$ for all $x\in A$.  Then $$\sum_i a_i |I_i|^s \geq c \mathcal{M}_{\delta}^s (A).$$
\end{lemma}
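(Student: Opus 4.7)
My plan is to construct, for each parameter $t \in [0, c)$, an explicit $\delta$-subcover $\mathcal{J}_t$ of $A$ drawn from the given collection, and then integrate the trivial bound $\sum_{I \in \mathcal{J}_t} |I|^s \ge \mathcal{M}_\delta^s(A)$ against $t$. To set things up, I first consolidate weights of coincident cubes by setting $a(I) := \sum_{i:\, I_i = I} a_i$, which turns the hypothesis into $\sum_{I \ni x} a(I) > c$ on $A$ without changing $\sum_i a_i |I_i|^s = \sum_I a(I) |I|^s$. Because any two binary cubes sharing a common point are totally ordered by inclusion, for each $x \in A$ the cubes of the (consolidated) collection containing $x$ form a strictly decreasing chain $I_0^x \supsetneq I_1^x \supsetneq \cdots$, whose partial sums $S_k^x := \sum_{j \le k} a(I_j^x)$ increase to a value strictly exceeding $c$.

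For $t \in [0, c)$, I would let $k_t(x)$ be the smallest index with $S_k^x > t$, set $J_t(x) := I_{k_t(x)}^x$, and take $\mathcal{J}_t := \{J_t(x) : x \in A\}$. Since $x \in J_t(x)$, this is a countable $\delta$-cover of $A$ by binary cubes, so $\mathcal{M}_\delta^s(A) \le \sum_{I \in \mathcal{J}_t} |I|^s$. The heart of the argument, and the step I expect to be most delicate, is the following combinatorial observation enabled by the rigid nestedness of binary cubes: if $J$ strictly contains $I$ and $x \in I$, then automatically $x \in J$. Consequently the partial sum just before $I$ in the chain at $x$ agrees, for every $x \in I \cap A$, with the $x$-independent quantity $S(I) := \sum_{J \supsetneq I} a(J)$. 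From this one reads off that $I \in \mathcal{J}_t$ exactly when $A \cap I \neq \emptyset$ and $t$ lies in the interval $[S(I), S(I) + a(I)) \cap [0, c)$, which has length at most $a(I)$.

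With this charge-interval bound in hand, Fubini on $[0, c) \times \{\text{cubes}\}$ closes the argument:
\[
c \, \mathcal{M}_\delta^s(A) \le \int_0^c \sum_{I \in \mathcal{J}_t} |I|^s \, dt = \sum_I |I|^s \cdot \bigl|\{t \in [0, c):\, I \in \mathcal{J}_t\}\bigr| \le \sum_I a(I) |I|^s = \sum_i a_i |I_i|^s,
\]
as desired. The only input specific to binary cubes is the nested-or-disjoint dichotomy, which is precisely what makes $S(I)$ independent of $x \in I$; all remaining steps are purely measure-theoretic. Consequently the proof is dimension-free and applies verbatim in $\mathbb{R}^d$ for any $d \ge 1$, recycling Falconer's one-dimensional argument exactly as the paper asserts.
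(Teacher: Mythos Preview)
Your proof is correct and follows essentially the same route as Falconer's one-dimensional argument (Lemma~5.7 in \cite{Falc86}), which is precisely what the paper invokes without writing out: the paper gives no independent proof, merely asserting that Falconer's argument ``can be recycled.'' Your level-set construction $\mathcal{J}_t$, the identification of $\{t:\,I\in\mathcal{J}_t\}$ with an interval of length at most $a(I)$ via the nested-or-disjoint property of half-open dyadic cubes, and the final Tonelli step are exactly the ingredients of that proof, carried through verbatim in $\mathbb{R}^d$.
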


\begin{proof}[Proof of Theorem \ref{higherFubgen}]
For simplicity of presentation, we consider the case when $A$ is a subset of the $x$-axis.  
We present the proof in such a way that the more general case is a natural extension.  
Let $A$ and $B$ be as in the statement of the theorem. 
Let $c>0$ so that, for each $x\in A$, it holds that 
\begin{equation}\label{hyp}  \mathcal{H}^t(B_x)  \geq c.\end{equation}
Let $\delta>0$ and let $\{C_i\}$ be a countable covering of $B$ by cubes of diameter at most $\delta$.  
Denote 
$$\pi(C_i):= \pi_1(C_i): = \{ y_1: (y_1,y_2, \dots, y_{d}) \in C_i,\, \text{for some } y_2,\ldots,y_d \},$$
and 
$$(C_i)_x: =C_i\cap \{(x, y_2, \dots, y_{d}): (y_2, \dots, y_{d}) \in \mathbb{R}^{d-1}\},$$
the projection of $C_i$ onto the first coordinate and the slice of $C_i$ by an axis-parallel hyperplane determined  by $x$, respectively.  

The main ingredients of the proof are hypothesis \eqref{hyp} coupled with the following simple observations. 
Observe that, for each $x\in \mathbb{R}$, the diameters of $C_i$, $(C_i)_x$, and $\pi(C_i)$ are all comparable.  (We will use $|\cdot |$ to  denote the diameter of a cube.)
Observe that the arbitrary covering of $B$ by cubes $\{C_i\}$ yields a natural cover of $A$ and of the sets $(B)_x$.  
Indeed, 
$$A\subset \bigcup_i \pi(C_i)$$
and, 
for each $x\in A$,
\begin{equation}\label{obs2}B_x \subset \bigcup_i (C_i)_x.\end{equation}

We begin by verifying the hypothesis of Lemma \ref{lemma5.7gen} with $a_i = |C_i|^t$ and $I_i = \pi(C_i)$.  
By equation \eqref{obs2} and the definition of Hausdorff measure, 
$$\mathcal{H}_{\delta}^t(B_x) \le \sum_i |  (C_i)_x|^t.$$

Set $A_{\delta} = \{x \in A: \mathcal{H}_{\delta}^t(B_x)\geq c\}.$
Now, if $x\in A_{\delta} $, we have
$$c \le \mathcal{H}_{\delta}^t(B_x) \le \sum_i |  (C_i)_x|^t \sim \sum_{ i:\, x\in \pi(C_i)} |  C_i|^t.$$
 
The above argument is true if we restrict to a covering of the set $B$ by binary cubes, which we will do to apply Lemma \ref{lemma5.7gen}.

Applying Lemma \ref{lemma5.7gen}, we obtain the following lower bound:
$$\sum_i |  C_i|^{s+t}  \sim \sum_i |  C_i|^t |  \pi(C_i)|^s \gtrsim c \mathcal{M}_{\delta}^s(A_{\delta}).$$

This is true for any covering of $B$ by binary cubes $\{B_i\}$, so 
$$\mathcal{M}^{s+t}(B) \gtrsim c \mathcal{M}^s(A_{\delta}).$$

We note that $A_{\delta}$ increases to $A$ as $\delta$ decreases to $0$, and apply a limiting argument to complete the proof.  In conclusion, we have
$$\mathcal{H}^{s+t}(B) \gtrsim c \mathcal{H}^s(A).$$

\end{proof}

\section{Regularity of the lattice example \ref{example: lattice}}\label{ADproperty}
In this section, we prove that if $E$ a set of Hausdorff dimension $\alpha$ constructed as in Example \ref{example: lattice}, then $E$ is not AD regular (as defined in Remark \ref{rmk: AD}) unless $\alpha=d$, but is $\{\delta_i\}$-discrete $\alpha$-regular (as defined in (\ref{eqn: delta discrete})).

\subsection{$E$ is not $AD$ regular}
In particular, we show that $E$ cannot support a Borel probability measure $\mu$ so that for each $r>0$ and for each $x\in E$, $cr^{\alpha} \le  \mu(B(x,r)) \le Cr^{\alpha}$, for universal constants $0<c<C$. 

Suppose such a $\mu$ exists. Take $r=\delta_i:=q_i^{-d/\alpha}$ and fix $x\in E$, then one has $\mu(B(x,\delta_i))\geq c \delta_i^\alpha$. On the other hand, by the lattice construction, $E\cap B(x, \delta_i)$ is contained in the union of at most $(\delta_i q_{i+1})^d$ balls of radius $\delta_{i+1}$. This then implies that
\[
c\delta_i^\alpha\leq \mu(B(x,\delta_i))\leq (\delta_i q_{i+1})^d\cdot C\delta_{i+1}^\alpha.
\]Plugging $\delta_i:=q_i^{-d/\alpha}$ into the above inequality, one obtains $\frac{c}{C}\leq q_i^{d(1-\frac{d}{\alpha})}$. Taking $q_i$ sufficiently large, we arrive at a contradiction when $\alpha<d$.

\subsection{$E$ is $\{\delta_i\}$-discrete $\alpha$-regular}
Choose $\delta_i=q_i^{-d/\alpha}$, then by definition one has $\delta_i\to 0$. For any $r\geq \delta_i$ and $x\in \mathbb{R}^d$, our goal is to show that 
\[
|E_{\delta_i}\cap B(x,r)|\lesssim \left(\frac{r}{\delta_i}\right)^\alpha \delta_i^d.
\]

To see this, one first observes that $E_{\delta_i}\subset \tilde{E_i}$, where $E_i$ is as defined in Example \ref{example: lattice} and $\tilde{E}_i$ is its slight enlargement by a constant. More precisely, one can define 
\[
\tilde{E}_i=\bigcup_{\vec{u} \in \{0, 1, \dots, q_i\}^d} \left\{\vec{x} \in \mathbb{R}^d: |\vec{x} - \frac{\vec{u}}{q_i}| < c_0q_i^{-d/\alpha}\right\}
\]for some universal constant $c_0$. Therefore, 
\[
|E_{\delta_i}\cap B(x,r)|\leq |\tilde{E}_i\cap B(x,r)|\lesssim_{c_0} \max (1, (rq_i)^d)\cdot \delta_i^d.
\]The right hand side above is obviously smaller than
\[
\max\left(1, r^{d-\alpha}\left(\frac{r}{\delta_i}\right)^\alpha\right)\cdot \delta_i^d\lesssim \left(\frac{r}{\delta_i}\right)^\alpha \delta_i^d,
\]where we have observed that $\alpha\leq d$ and $\delta_i\leq r\lesssim 1$.
  

\section{An example concerning the parabolic metric}\label{parabolic}

In this subsection, we give a lower bound related to Theorem \ref{main3PhongStein} for $\alpha< \frac{d+1}{2}$. 
The plan is to constrcut a $\{\delta_i\}$-discrete $\alpha$-regular set along with a metric $\phi$ so that the distance $1$ is repeated often.  
In particular, using the notation introduced in Section \ref{PS_section}, we give construction of a metric $\phi$ and a set $E$ so that the upper Minkowski dimension of $VS_{\vec{t}}^{\phi,k}(E)$ is bounded below by $\alpha + \frac{\alpha(d-1)k}{d+1}$.  We set $\vec{t}=\vec{1}$, but the example presented here can be modified to work for any $\vec{t}\in \mathbb{R}^k$ with positive components.  
We modify the construction used in \cite{EIT} to the setting of chains.  
\\

To define the set $E$, let 
$\{q_i\}_{i\in \mathbb{N}}$ be a sequence of positive integers such that $q_{i+1}=q_i^i$ and $q_1=2$.
Set $L_i$ equal to the truncated and non-isotropically scaled lattice:
$$ \left\{\left(\frac{x_1}{q_i^{\frac{d}{d+1}}},\ldots,\frac{x_{d-1}}{q_i^{\frac{d}{d+1}}}, \frac{x_d}{q_i^{\frac{2d}{d+1}}}  \right):\, x\in {\Bbb Z}^d, 0 \leq x_1, \dots, x_{d-1} \leq q_i^{\frac{d}{d+1}} \text{ and } 0 \leq x_d \leq q_i^{\frac{2d}{d+1}} \right\},$$ and set
$E_i$ equal to the $q_i^{-\frac{d}{\alpha}}$ neighborhood of $L_i$. It is known that $E=\bigcap_i E_i$ is a construction of a set of Hausdorff dimension $\alpha$ (see, for example, \cite[Chapter 8, Theorem 8.15]{Falc86}), and it is a simple calculation to check that $E$ is $\{\delta_i\}$-discrete $\alpha$-regular (with $\delta_i=q_i^{-\frac{d}{\alpha}}$).

The function $\phi$ is defined using a variant of the incidence construction due to P. Valtr \cite{V05}.  In particular, $\phi(x,y)={||x-y||}_B$, where ${|| \cdot ||}_B$ is the norm induced by a convex body $B$.  
Roughly speaking, $B$ is created by glueing two copies of a paraboloid together. 
Explicitly, let
\[
\begin{split}
B_U=&\{(x_1, x_2, \dots, x_d) \in \mathbb{R}^d : x_i \in [-1,1], \text{ for } 1 \leq i \leq d-1, \\
&\qquad \qquad\qquad \qquad \qquad \text{ and } x_d = 1-\left( x_1^2+x_2^2+ \dots + x_{d-1}^2 \right)\},
\end{split}
\]
and
\[
\begin{split}
B_L=&\{(x_1, x_2, \dots, x_d) \in \mathbb{R}^d : x_i \in [-1,1], \text{ for } 1 \leq i \leq d-1,\\
&\qquad \qquad\qquad \qquad \qquad \text{ and } x_d = -1+ x_1^2+x_2^2+ \dots + x_{d-1}^2\}.
\end{split}
\]
Now, let
\[
\begin{split}
B'=& \left(B_U \cap \left\lbrace (x_1, x_2, \dots, x_d) \in \mathbb{R}^d :x_d \geq 0\right\rbrace \right) \cup \\
& \qquad \qquad\qquad \qquad \qquad\left( B_L \cap \left\lbrace (x_1, x_2, \dots, x_d) \in \mathbb{R}^d :x_d \leq 0 \right\rbrace \right).\end{split}
\]
Finally, define $B$ to be the convex body $B'$, with the ridge at the transition between $B_U$ and $B_L$ smoothed.

The problem of calculating $\overline{{\rm dim}}_{{\mathcal M}}(VS_{\vec{t}}^{\phi,k}(E))$ reduces to the problem of calculating the number of balls of radius $q_i^{-\frac{d}{\alpha}}$ needed to cover $VS_{\vec{t}}^{\phi,k}(L_i)$. Specifically, for each $i$ and for each $x\in L_i$, 
$$N\left( \left\{y \in L_i : {||x-y||}_B=1 \right\}, q_i^{-\frac{d}{\alpha}}\right) \sim q_i^{\frac{d(d-1)}{d+1}},$$ where $N(A, \delta)$ denotes the number of $\delta$-balls needed to cover a compact set $A$.  More generally, 
$$N\left( VS_{x, \vec{t}}^{\phi,k}(L_i), q_i^{-\frac{d}{\alpha}}\right) \sim q_i^{\frac{kd(d-1)}{d+1}},$$
where $VS_{x, \vec{t}}^{\phi,k}(L_i)$ denotes the vertex set of the $k$-chain with $x$, the first vertex, pinned.  
Thus, 
the number of balls of radius $q_i^{-\frac{d}{\alpha}}$ needed to cover $VS_{\vec{t}}^{\phi,k}(L_i)$
and, consequently, $VS_{\vec{t}}^{\phi,k}(E)$, is 
$$ \gtrsim q_i^d \cdot q_i^{\left(\frac{d(d-1)}{d+1}\right) k}
= {(q_i^{-\frac{d}{\alpha}})}^{-\frac{\alpha[(d+1) + (d-1)k]}{d+1}}.$$
Here, we used the observation that, for each $i$, $E$ contains $\sim q_i^d$ many elements of the set of scaled lattice points, $L_i$.
It follows that $\overline{  {\rm dim}}_{\mathcal{M}}(VS_{\vec{t}}^{\phi,k}(E))$ is at least $\frac{\alpha[(d+1) + (d-1)k]}{d+1}$. 



\bigskip

\end{document}